\newcommand{\Lo}{\mathfrak{L}}
\newcommand{\Gain}{\mathfrak{G}}
\newcommand{\CC}{\mathcal{C}}
\newtheoremstyle{theorem}
  {10pt}          
  {10pt}  
  {\sl}  
  {\parindent}     
  {\bf}  
  {. }    
  { }    
  {}     
\theoremstyle{theorem}
\newtheorem{theorem}{Theorem}
\newtheorem{corollary}[theorem]{Corollary}
\newtheorem{lem}[theorem]{Lemma}
\newtheorem{claim}[theorem]{Claim}
\newtheorem{rem}[theorem]{Remark}
\newtheorem{defn}{Definition}[section]
\newtheoremstyle{defi}
  {8pt}          
  {8pt}  
  {\rm}  
  {\parindent}     
  {\bf}  
  {. }    
  { }    
  {}     
\theoremstyle{defi}
\newcommand{\Gr}{\mathcal{G}}
\newcommand{\G}{\gamma}
\newcommand{\Ud}{\mathbb{U}_D}
\newcommand{\Di}{\Delta}
\newcommand{\di}{\delta}
\newcommand{\A}{\alpha}
\newcommand{\LE}{\preceq}
\newtheorem{lemma}[theorem]{Lemma}
\newtheorem{conjecture}{Conjecture}
\date{}
\begin{document}
\title[Wiener index of unicyclic graphs]{On minimizing the Wiener index of unicyclic graphs with fixed girth and given degree sequence}
\maketitle
\begin{center}
\author{Alewyn P. Burger$^{1}$}

\email{apburger@sun.ac.za} \\ 	     
\author{Valisoa R. M. Rakotonarivo$^{2,3}$\\}
\email{valisoa.rakotonarivo@up.ac.za\\}
\address{$^1$ Department of Logistics, Stellenbosch University, South Africa
\\$^2$ Department of Mathematics and Applied Mathematics,\\ University of Pretoria, South Africa\\
$^3$ DSI-NRF, Centre of Excellence in Mathematical And Statistical Sciences, South Africa}
\date{\Today}
\end{center}

\begin{abstract}
The Wiener index of a graph is the sum of all the distances between any pair of vertices. We aim to describe graphs which minimize the Wiener index among all unicyclic graphs with fixed girth and given degree sequence. Depending on where the centroid of the graph is, we will present three candidates for the minimization, namely the greedy unicyclic graph, the cycle-centered graph and the out-greedy unicyclic graph.
\end{abstract}

%
\noindent
{\bf Keywords}: Wiener index, unicyclic graphs, degree sequence, fixed girth, centroid.

\section{Introduction}
\label{Intro}
The Wiener index, which is the sum of the distances between all pairs of vertices, was  conceived by the chemist Wiener in 1947 \cite{Wiener1947}, who introduced the idea to predict chemical and physical properties of a compound by analysing the structure of its molecules using techniques from graph theory. The Wiener index has attracted considerable attention from researchers in mathematics and chemistry, and it continues to be studied extensively. Many researchers have focused on the class of trees (see for example \cite{Dobrynin2001}), because many graphs arising from molecules are trees, and because they have a relatively simple structure. The Wiener index of trees with a given degree sequence has been studied by several authors, for example \cite{Wang2009,Zhang2008}. For unicyclic graphs (graphs obtained from trees by adding a single edge), the Wiener index was first considered by Gutman and al. \cite{Gutman1997}, later some authors investigated various restrictions such as given diameter \cite{Du2009}, given matching number \cite{Du2010,Chen2012,Cambie2018}, a given girth \cite{Yu2010}, a given number of pendant or cut vertices \cite{Tan2017}, and a given bipartition \cite{knor2015,Jiang2019}. To our knowledge, the Wiener index of unicyclic graphs with a given degree sequence has not been studied in the literature, and it is the main motivation of this paper to fill this gap.  Some of the recent works on unicyclic graphs for a distance-based invariant can be found in \cite{Feng2020}, where they focused on fixed diameters and in \cite{Zhang2024}, where they characterized unicyclic graph with given maximum degree. 

All graphs considered in this paper are simple, finite and undirected. Let $G$ be a graph, and $u,v$ be vertices of $G$. The \emph{distance} between $u$ and $v$, denoted by $d_{G}(u,v)$ (we may omit the $G$ if there is no confusion) is the length of the shortest path between $u$ and $v$. Moreover, the
\emph{distance of a vertex} $u$ in $G$, denoted by $D_G(u)$, is the sum of distances from $u$ and 
all other vertices of $G$. The subgraph of $G$ induced by the vertices with minimum
distance is called the \emph{centroid} of $G$, and is denoted by $M(G)$. The Wiener index can be formally written as follows:
\begin{align*}
W(G)=\sum_{\{u,v\}}d_G(u,v)=\frac{1}{2}\sum_{u \in V(G)}D_G(u).
\end{align*}


The paper is structured as follows:  in Section \ref{trees}, we will recall results obtained for trees with given degree sequence. Since trees and unicyclic graphs only differ by one edge, it is necessary to understand the structure of the tree that minimizes the Wiener index. Next, in Section \ref{cycle}, we will discuss the position of the vertices on the cycle itself according to their subtrees/degrees. Before attacking the vertices outside the cycle, we need to comprehend the influence of adding an edge to a tree to get a unicyclic graph, which will be in Section \ref{difference}. In Section \ref{outside}, we then figure out the position of the vertices not belonging to the cycle. Lastly, in section \ref{fixed girth}, we characterize the graphs that minimizes the Wiener index in the set $\Ud^{\G}$, which is the set of all unicyclic graphs with given degree sequence $D$ and fixed girth $\G$. 
\section{Known results on trees with given degree sequence}\label{trees}
Let $D=(d_1,d_2,\dots,d_t,1,\dots,1)$ be the degree sequence of a unicyclic graph, we assume without loss of generality that $d_1\geq d_2\geq \cdots d_t\geq 2$. To obtain a tree from a unicyclic graph or vice versa, we need to remove an edge (resp. add an edge). In terms of degree sequences, it means the sequences only differ within two entries in such a way:
\begin{align*}
d'_i&=d_i-1 \\
d'_j&=d_j-1, 
\end{align*}
where  $d_i$ and $d_j$ are the entries in $D$ and $d'_i$ and $d'_j$ are entries in the corresponding degree sequence $D'$ of a tree, the other degrees remain the same.
It is then natural to understand the characterization of trees that minimize the Wiener index among trees with given degree sequence, which is the aim of this section. 

Let $T$ be a tree rooted at one of its vertices $v$. The height of $T$, denoted by $h(T)$, is the largest distance between the root $v$ and any leaf and a \emph{pseudo-leaf} is a vertex whose neighbours are all leaves except possibly one.


\begin{defn}[\cite{Schmuck},Greedy tree]\label{greedytree}
Let $D$ be a degree sequence of a tree. The greedy tree denoted by $\Gr(D)$ is achieved through the following ``greedy algorithm'':
\begin{itemize}
 \item[ i)] Label the vertex with the largest degree as $v$ (the root);
 \item[ ii)] Label the neighbors of $v$ as $v_1$, $v_2$, \dots, assign the largest degrees available to them such that $\deg(v_1) \geq \deg(v_2) \geq \dots$;
 \item[ iii)] Label the neighbors of $v_1$ (except $v$) as $v_{11}, v_{12},$\dots such that they take all the largest degrees available and that $\deg(v_{11}) \geq \deg(v_{12}) \geq\dots$, then do the same for $v_2$, $v_3$, \dots;
 \item[ iv)] Repeat (iii) for all the newly labeled vertices, always start with the neighbors of the labeled vertex with largest degree whose neighbors are not labeled yet.
\end{itemize}
\end{defn}
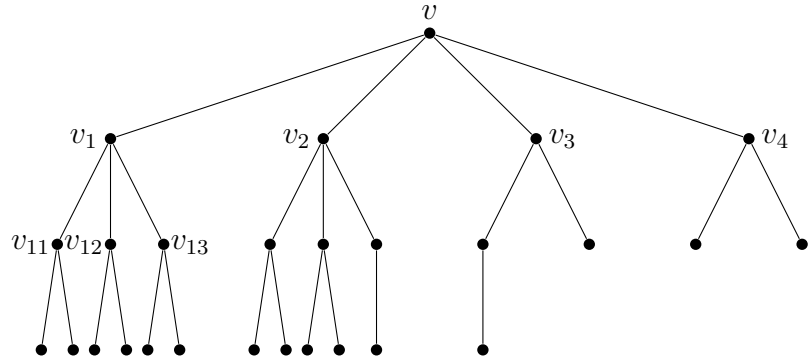
\begin{figure}[H]

\centering
    \begin{tikzpicture}[scale=0.7]
        \node[fill=black,circle,inner sep=1.5pt] (v) at (10,6) {};
        \node[fill=black,circle,inner sep=1.5pt] (v1) at (4,4) {};
        \node[fill=black,circle,inner sep=1.5pt] (v2) at (8,4) {};
        \node[fill=black,circle,inner sep=1.5pt] (v3) at (12,4) {};
        \node[fill=black,circle,inner sep=1.5pt] (v4) at (16,4) {};
        \node[fill=black,circle,inner sep=1.5pt] (v42) at (17,2) {};
        \node[fill=black,circle,inner sep=1.5pt] (v12) at (4,2) {};
        \node[fill=black,circle,inner sep=1.5pt] (v41) at (15,2) {};
        \node[fill=black,circle,inner sep=1.5pt] (v32) at (13,2) {};
        \node[fill=black,circle,inner sep=1.5pt] (v22) at (8,2) {};
        \node[fill=black,circle,inner sep=1.5pt] (v31) at (11,2) {};
        \node[fill=black,circle,inner sep=1.5pt] (v23) at (9,2) {};
        \node[fill=black,circle,inner sep=1.5pt] (v21) at (7,2) {};
        \node[fill=black,circle,inner sep=1.5pt] (v13) at (5,2) {};
        \node[fill=black,circle,inner sep=1.5pt] (v11) at (3,2) {};

        \node[fill=black,circle,inner sep=1.5pt] (v111) at (3.3,0) {};
        \node[fill=black,circle,inner sep=1.5pt] (v112) at (2.7,0) {};
        \node[fill=black,circle,inner sep=1.5pt] (v121) at (4.3,0) {};
        \node[fill=black,circle,inner sep=1.5pt] (v122) at (3.7,0) {};
        \node[fill=black,circle,inner sep=1.5pt] (v131) at (5.3,0) {};
        \node[fill=black,circle,inner sep=1.5pt] (v132) at (4.7,0) {};
        \node[fill=black,circle,inner sep=1.5pt] (v211) at (7.3,0) {};
        \node[fill=black,circle,inner sep=1.5pt] (v212) at (6.7,0) {};
        \node[fill=black,circle,inner sep=1.5pt] (v221) at (8.3,0) {};
        \node[fill=black,circle,inner sep=1.5pt] (v222) at (7.7,0) {};
        \node[fill=black,circle,inner sep=1.5pt] (v231) at (9,0) {};
        \node[fill=black,circle,inner sep=1.5pt] (v311) at (11,0) {};

        \draw (v)--(v1);
        \draw (v)--(v2);
        \draw (v)--(v3);
        \draw (v)--(v4);
        \draw (v1)--(v11);
        \draw (v1)--(v12);
        \draw (v1)--(v13);
        \draw (v2)--(v21);
        \draw (v2)--(v22);
        \draw (v2)--(v23);
        \draw (v3)--(v31);
        \draw (v3)--(v32);
        \draw (v4)--(v41);
        \draw (v4)--(v42);
        \draw (v11)--(v111);
        \draw (v11)--(v112);
        \draw (v12)--(v121);
        \draw (v12)--(v122);
        \draw (v13)--(v131);
        \draw (v13)--(v132);
        \draw (v21)--(v211);
        \draw (v21)--(v212);
        \draw (v22)--(v221);
        \draw (v22)--(v222);
        \draw (v23)--(v231);
        \draw (v31)--(v311);

\node at  (10,6.4) {$v$};

\node at  (3.5,4) {$v_{1}$};
\node at  (7.5,4) {$v_{2}$};
\node at  (12.5,4) {$v_{3}$};
\node at  (16.5,4) {$v_{4}$};

\node at  (2.5,2) {$v_{11}$};
\node at  (3.5,2) {$v_{12}$};
\node at  (5.5,2) {$v_{13}$};
    \end{tikzpicture}

\caption{A greedy tree (only the labels of the first height vertices are shown).}
\label{fig:greedy_tree}
\end{figure}

\begin{rem}\label{height}
We observe that in a greedy tree the leaves are all of distance $h(\Gr(D))$ or $h(\Gr(D))-1$ from the root.
\end{rem}

\begin{theorem}[\cite{Wang2009},\cite{Schmuck}]\label{ThmWang}
Given a degree sequence $D$ of a tree, the greedy tree $\Gr(D)$ is the unique tree that minimizes the Wiener index.
\end{theorem}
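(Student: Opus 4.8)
The plan is to prove Theorem~\ref{ThmWang} by a local-exchange (or ``switching'') argument: start with an arbitrary tree $T$ realizing the degree sequence $D$ that minimizes the Wiener index, and show that unless $T\cong\Gr(D)$ one can perform a structure-preserving modification that strictly decreases $W$, a contradiction. First I would root $T$ at a centroid vertex $v$ (recall $W(G)=\tfrac12\sum_u D_G(u)$, and a centroid minimizes $D_G$), and record the basic fact that for a tree $T$ with an edge $e=xy$ whose removal splits $T$ into components $T_x\ni x$ and $T_y\ni y$, moving a branch from one side to the other changes $W$ by a quantity controlled by the sizes $|T_x|,|T_y|$; this ``edge-contribution'' formula $W(T)=\sum_{e}|T_x|\cdot|T_y|$ is the workhorse and I would state it first.

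Next I would establish the two monotonicity principles that together force the greedy shape. The first is a \emph{level principle}: in the minimizer, a vertex of larger degree should sit closer to the root than a vertex of smaller degree --- if $u$ is farther from $v$ than $w$ but $\deg(u)>\deg(w)$, then swapping the subtrees hanging below $u$ and $w$ (together with reassigning the degrees appropriately) does not increase, and in the relevant case strictly decreases, $W$. The second is a \emph{compression principle}: among vertices at a common level, the available large degrees and large subtrees should be concentrated under the already-large-degree ancestors, so that at every level the degree sequence read off in the greedy BFS order is lexicographically maximal. Each of these is proved by exhibiting an explicit swap of two pendant subtrees and computing the sign of $\Delta W$ using the edge-contribution formula; the key inequality is that relocating more vertices closer to the bulk of the tree decreases the sum of products $|T_x||T_y|$.

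I would then assemble these into the characterization: iterating the level principle and the compression principle from the root outward shows that any minimizer must agree with the greedy algorithm at level $1$, then (given that) at level $2$, and so on, hence is isomorphic to $\Gr(D)$; uniqueness follows because every non-greedy tree admits a \emph{strict} improvement, so the minimizer is unique up to isomorphism. Since this is a known theorem (Wang~\cite{Wang2009}, Schmuck et al.~\cite{Schmuck}), I would keep the write-up to a sketch and cite those papers for the full details, in particular for the bookkeeping that the swaps indeed preserve the degree sequence and reach the greedy tree in finitely many steps.

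The main obstacle is the compression step: a naive subtree swap between two vertices at the same level can change the multiset of degrees present in each subtree, so one must be careful to swap only \emph{pendant} subtrees (or to combine a swap with a relabeling of degrees) so that $D$ is preserved, and then to check that the resulting $\Delta W$ really has the right sign in \emph{every} configuration, not just the generic one --- the boundary cases, where Remark~\ref{height} (all leaves at depth $h$ or $h-1$) becomes tight, are where the inequality can degenerate to equality and where one has to argue more delicately that repeated application still terminates at a unique tree. Handling these near-equality cases cleanly, rather than the generic strict-inequality cases, is the real work, and it is precisely the part I would defer to the cited literature.
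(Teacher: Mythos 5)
The paper offers no proof of this statement at all: it is quoted as a known theorem and attributed to \cite{Wang2009} and \cite{Schmuck}, so there is nothing internal to compare your argument against. Your sketch is a correct outline of the standard exchange argument used in those references --- the edge-contribution formula $W(T)=\sum_{e}|T_x|\,|T_y|$, a ``larger degrees closer to the root'' principle, and a same-level compression principle, iterated from the root outward --- and you are right that the delicate points are preserving the degree sequence under the swaps and handling the equality cases needed for uniqueness. For what it is worth, Wang's own organization is slightly different from yours: rather than a level-by-level BFS comparison, he establishes the concavity of component sizes along every maximal path (this is exactly Lemma~\ref{optimalpath} of the present paper) and then invokes the Hardy--Littlewood--P\'olya rearrangement inequality (Theorem~\ref{TheoremHardy}) to pin down the greedy arrangement; your level/compression formulation is closer to the treatment in \cite{Schmuck}. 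Since the statement is cited rather than proved here, deferring the bookkeeping to the literature is exactly what the paper itself does, and your proposal is adequate for that purpose.
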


In \cite{Wang2009}, the author described extremal paths in an optimal tree (trees that minimizes the Wiener index) as follows. Let $u,w$ be any two leaves of $T$. The path $P_{uw}$ from $u$ to $w$ can be written as $u_k u_{k-1} \dots u_2 u_1 w_1 w_2 \dots w_{k-1}$ when the order of $|P_{uw}|$ is odd, or $u_ku_{k-1}\dots u_2u_1w_1w_2\dots w_{k-1}w_k$ when $|P_{uw}|$ is even, where $u_k = u,w_{k-1} = w$ (resp. $u_k=u,w_{k}=w$). If we remove all the edges in $P_{uw}$ from $T$ we obtain $2k-1$ (respectively $2k$)connected components. We denote by $U_i,W_i$ the components that contain respectively $u_i,w_i$ for $i=1,\dots,k$. Figure \ref{path Puv} shows such labelling with respect to a path of even order.

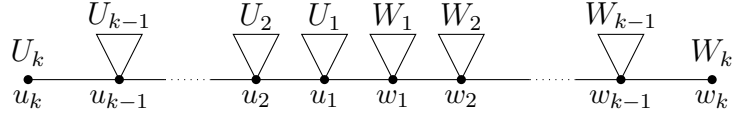
\begin{figure}[h!]
\centering
\begin{tikzpicture}[scale=1.2]
\draw [-] (1,-1) -- (2,-1) ;
\draw [-](2,-1)--(2.25,-0.5)--(1.75,-0.5)--(2,-1)--(2.5,-1);
\draw [dotted](2.5,-1)--(3,-1);
\draw [-](3,-1)--(3.5,-1)--(3.75,-0.5)--(3.25,-0.5)--(3.5,-1)--(4.25,-1);
\draw [-](4.25,-1)--(4.5,-0.5)--(4,-0.5)--(4.25,-1)--(5,-1);
\draw [-](4.25+0.75,-1)--(4.5+0.75,-0.5)--(4+0.75,-0.5)--(4.25+0.75,-1)--(5+0.75,-1);
\draw [-](4.25+1.5,-1)--(4.5+1.5,-0.5)--(4+1.5,-0.5)--(4.25+1.5,-1)--(5+1.5,-1);
\draw [dotted](6.5,-1)--(7,-1);
\draw [-](3+4,-1)--(3.5+4,-1)--(3.75+4,-0.5)--(3.25+4,-0.5)--(3.5+4,-1)--(4.5+4,-1);
\draw (1,-0.7) node{$U_{k}$};
\draw (2,-0.3) node{$U_{k-1}$};
\draw (7.5,-0.3) node{$W_{k-1}$};
\draw (8.5,-0.7) node{$W_{k}$};
\draw (3.5,-0.3) node{$U_2$};
\draw (4.25,-0.3) node{$U_{1}$};
\draw (5,-0.3) node{$W_{1}$};
\draw (5.75,-0.3) node{$W_{2}$};
\fill (3.5,-1) circle(0.05) node [below] {\small $u_{2}$};
\fill (4.25,-1) circle(0.05) node [below] {\small $u_{1}$};
\fill (5,-1) circle(0.05) node [below] {\small $w_{1}$};
\fill (5.75,-1) circle(0.05) node [below] {\small $w_{2}$};
\fill (1,-1) circle(0.05) node [below] {\small $u_{k}$};
\fill (2,-1) circle(0.05) node [below] {\small $u_{k-1}$};
\fill (8.5,-1) circle(0.05) node [below] {\small $w_{k}$};
\fill (7.5,-1) circle(0.05) node [below] {\small $w_{k-1}$};
\end{tikzpicture}		
\caption{Labelling of a path $P_{uw}$ of even order.}
\label{path Puv}
\end{figure}
 
The following lemma describes the \emph{concave} arrangement of the subtrees attached to each vertex along a maximal path according to its order.

\begin{lem}[\cite{Wang2009}, concavity]\label{optimalpath}
In an optimal tree, for a maximal path with labelling as Figure \ref{path Puv}, we have
\begin{equation*}
|V(U_1)| \geq |V(W_1)| \geq |V(U_2)| \geq |V(W_2)|\geq \cdots \geq |V(U_k)|=|V(W_k)|=1
\end{equation*}
if the path is of even order $2k$; and
\begin{align*}
&|V(U_1)| \geq |V(W_1)| \geq |V(U_2)| \geq |V(W_2)|\geq \cdots 
\\&\geq |V(U_{k-1})|\geq |V(W_{k-1})|=|V(U_{k})|=1
\end{align*}
if the path of odd order $2k-1$.

In particular:
\begin{equation*}
d(u_1) \geq d(w_1) \geq d(u_2) \geq d(w_2)\geq \cdots \geq d(u_k)=d(w_k)=1
\end{equation*}
if the path is of even order $2k$; and
\begin{equation*}
d(u_1) \geq d(w_1) \geq d(u_2) \geq d(w_2)\geq \cdots \geq d(u_{k-1})\geq d(w_{k-1})=d(u_{k})=1
\end{equation*}
if the path is of odd order $2k-1$.
\end{lem}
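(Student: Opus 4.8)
The plan is to run a switching (local exchange) argument on an optimal tree, together with a convexity observation about transmission along a path.

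First fix an optimal tree $T$ on $n$ vertices and a maximal path $P$, and write its vertices in order as $p_1,\dots,p_N$ ($N=2k$ or $N=2k-1$), so that $p_m=u_{k+1-m}$ for $m\le k$ and $p_{k+j}=w_j$. Deleting the edges of $P$ splits $T$ into components $C_1,\dots,C_N$ with $p_m\in C_m$; set $n_m=|V(C_m)|$ and $s_m=\sum_{x\in C_m}d_{C_m}(x,p_m)$. Since $P$ is maximal, its endpoints are leaves, so $C_1$ and $C_N$ are singletons, which already gives the outer equalities $|V(U_k)|=|V(W_k)|=1$ (resp.\ $|V(W_{k-1})|=|V(U_k)|=1$). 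For $x\in C_i$ and $y\in C_j$ with $i<j$ one has $d_T(x,y)=d_{C_i}(x,p_i)+(j-i)+d_{C_j}(y,p_j)$, and summing over all pairs of vertices gives
\[
W(T)=\sum_{m=1}^{N}W(C_m)+\sum_{m=1}^{N}s_m\,(n-n_m)+\sum_{1\le i<j\le N}(j-i)\,n_i n_j .
\]
Re-attaching the branches $C_2,\dots,C_{N-1}$ to the interior vertices of $P$ in any other order produces a tree with the same degree sequence (every root keeps exactly two path-edges), and only the last double sum changes. Hence, by optimality of $T$, that sum is minimal over all such rearrangements.

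Now compare $T$ with the tree $T'$ obtained from it by interchanging the branches sitting at positions $a<b$, of sizes $x$ and $y$. A short computation from the identity above yields
\[
W(T')-W(T)=(y-x)\Bigl[\bigl(\rho(a)-\rho(b)\bigr)+(b-a)(x-y)\Bigr],\qquad \rho(m):=\sum_{c=1}^{N}|m-c|\,n_c ,
\]
and $\rho(m)=D_T(p_m)-\sum_c s_c$, so $\rho$ merely re-encodes the transmission of path-vertices. If $x>y$ and $\rho(a)\ge\rho(b)$ the bracket is strictly positive, whence $W(T')<W(T)$, contradicting optimality. Thus, in $T$, \emph{the larger of any two interior branches is attached at the path-vertex of strictly smaller transmission}. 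Moreover $D_T$ restricted to $P$ is convex, since $D_T(p_{m+1})-D_T(p_m)=(n_1+\cdots+n_m)-(n_{m+1}+\cdots+n_N)$ is non-decreasing in $m$; hence the transmissions along $P$ strictly decrease to a unique minimum at some vertex $p_{m^\ast}$ and increase afterwards, and the branch sizes $n_m$ are unimodal with peak at $m^\ast$.

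It remains to locate $m^\ast$ and to organize the two ``arms'' of $P$. I claim — and this is the heart of the matter — that in an optimal tree $p_{m^\ast}$ is the midpoint of $P$, and that the arm carrying the heavier branches can be taken to be the $w$-arm; granting this, unimodality at the midpoint immediately gives $|V(U_1)|\ge|V(W_1)|\ge|V(U_2)|\ge|V(W_2)|\ge\cdots$, and the degree chain follows since each interior vertex $z$ of $P$ has $d(z)=2+(\text{number of branches of }z\text{ off }P)$ while the endpoints have degree $1$, these branch-counts being non-increasing by the same reasoning. To prove the claim I would argue by contradiction: if $p_{m^\ast}$ were not the midpoint, $P$ would extend strictly farther from $p_{m^\ast}$ on one side than on the other, and one could reroute a pendant branch near $p_{m^\ast}$ onto the shorter arm — or exchange it with a shorter sibling branch there — and strictly decrease $W$; this arm-balancing step, in which the maximality of $P$ enters essentially, is the main obstacle. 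Alternatively, and far more quickly, one may invoke Theorem~\ref{ThmWang}: the optimal tree is the greedy tree $\Gr(D)$, all of whose leaves lie at depth $h$ or $h-1$ from the root (Remark~\ref{height}) and whose degrees are assigned largest-first in breadth-first order, so that both branch size and degree fail to increase along root-to-leaf paths; reading off the branches met along any maximal path of $\Gr(D)$ then gives both chains at once.
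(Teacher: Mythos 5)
First, note that the paper does not actually prove Lemma \ref{optimalpath}: it is imported from \cite{Wang2009} without proof, so there is no in-paper argument to compare yours against and your attempt must stand on its own. Your setup is sound: the decomposition $W(T)=\sum_m W(C_m)+\sum_m s_m(n-n_m)+\sum_{i<j}(j-i)n_in_j$, the swap formula $W(T')-W(T)=(y-x)\bigl[\rho(a)-\rho(b)+(b-a)(x-y)\bigr]$, and the convexity of $D_T$ along $P$ are all correct, and together they do give that the branch sizes are unimodal with peak at a transmission-minimizing vertex. But the lemma asserts strictly more than unimodality: it asserts the \emph{interleaved} chain $|V(U_1)|\ge|V(W_1)|\ge|V(U_2)|\ge\cdots$ centered at the middle edge $u_1w_1$, which requires cross-comparisons between the two arms and pins the peak to the midpoint of $P$. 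You flag this yourself as ``the heart of the matter'' and then do not prove it: the proposed repair (``reroute a pendant branch onto the shorter arm'') does not in general preserve the degree sequence, and the fallback (``invoke Theorem \ref{ThmWang} and read off the greedy tree'') is left entirely unexecuted --- verifying the interleaving along an arbitrary maximal path of $\Gr(D)$, which need not pass through the root, is itself a nontrivial check. So the central step is missing. Ironically, the tool to close it is already in the paper: your identity shows that only $\sum_{i<j}(j-i)n_in_j=\tfrac12\sum_{i,j}|i-j|n_in_j$ varies when the interior components are permuted, minimizing this is equivalent to maximizing $\sum_{i,j}(K-|i-j|)n_in_j$ for a large constant $K$ (since $K(\sum_i n_i)^2$ is invariant), and Theorem \ref{TheoremHardy} then forces the alternating arrangement $n^+$, which is exactly the claimed size chain with the two singleton endpoints at the extremes.

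Second, the degree chain does not follow from the size chain in the way you assert. A component can be large while its root has small degree (a long pendant path), so ``these branch-counts being non-increasing by the same reasoning'' is not a proof; one needs a separate exchange in which $d(w_j)-d(u_i)$ edges incident to $w_j$ are reattached to $u_i$ (preserving the degree sequence as a multiset and then re-invoking the size inequalities), in the spirit of the operation the paper itself uses in Corollary \ref{corx0y0} for the cycle. As written, both halves of the lemma's conclusion --- the interleaved size inequalities and the degree inequalities --- rest on unproved assertions.
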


\begin{rem}\label{atmost2vces}
Note that Lemma \ref{optimalpath} still holds in the case of a maximal path in a unicyclic graph that involves at most 1 vertex of the cycle. In fact, for this case, the cycle itself can be considered as ``a tree component" and the switching argument in the proofs in \cite{Wang2009} still holds.
\end{rem}

Let $S_n$ be the set of all permutations of $\{1,\dots,n\}$. Let $A=(a_1,\dots,a_n)$ and $B=(b_1,\dots,b_n)$ be sequences of nonnegative numbers. We say that $B$ \textit{majorizes} $A$ if for all $1 \leq k \leq n$ we have
\[\sum_{i=1}^k a_i \leq \sum_{i=1}^k b_i.\]
If for any $\sigma \in S_n$ the sequence $B$  majorizes $(a_{\sigma(1)},\dots,a_{\sigma(n)})$, then we write
\[A \LE B.\]

In \cite{Zhang2008}, the authors compared trees of the same order with different degree sequences and proved that the greedy tree corresponding to the sequence which majorizes any other sequence has the minimum Wiener index. 
\begin{theorem}[\cite{Zhang2008}]
Let $B$ and $B'$ be the degree sequences of trees of the same order such that $B \LE B'$. Then we have
\[W(\Gr(B)) \geq W(\Gr(B')).\] \label{majotree}
\end{theorem}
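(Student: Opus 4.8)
The plan is to reduce the statement to the case of a single ``unit transfer'' of degree and then to carry out one local surgery on the greedy tree, closing the argument with Theorem \ref{ThmWang}.

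Since $B$ and $B'$ are degree sequences of trees of the same order $n$, both have sum $2(n-1)$, and $B\LE B'$ says precisely that $B'$ majorizes $B$ in the usual sense. First I would use the standard description of majorization by ``Robin Hood'' transfers to build a finite chain $B=C_0,C_1,\dots,C_m=B'$ with $C_i\LE C_{i+1}$, where each $C_{i+1}$ is obtained from $C_i$ by one concentrating unit transfer: one entry of value $b$ is lowered to $b-1$ while one entry of value $a\ge b$ is raised to $a+1$. Every $C_i$ still has all entries $\ge 1$ and sum $2(n-1)$, hence is a tree degree sequence; and one can always arrange that the lowered entry is $\ge 2$, by raising an entry where the sorted $C_i$ lags behind the sorted $B'$ and compensating at a later entry, where $C_i$ then exceeds $B'$. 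So it suffices to prove $W(\Gr(B))\ge W(\Gr(B'))$ when $B'$ comes from $B$ by one such transfer, and then to chain the inequalities.

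For the surgery, assume $B'$ is obtained from $B$ by lowering a value $b_q\ge 2$ and raising a value $b_p\ge b_q$, and write $T=\Gr(B)$. I would pick a vertex $x$ of $T$ with $\deg x=b_p$ and a vertex $y\ne x$ with $\deg y=b_q$ for which $D_T(x)\le D_T(y)$ (possible, see below). Since $\deg_T y\ge 2$, the graph $T-y$ has at least two components, so there is a pendant subtree $S$ of $T$ attached at $y$ by an edge $yz$ with $x\notin S$. Let $T'$ be obtained from $T$ by deleting $yz$ and inserting $xz$; then $T'$ is a tree with degree sequence $B'$. Distances within $S$ and within $A:=V(T)\setminus V(S)$ are unchanged, while every distance from $a\in A$ to $\sigma\in S$ changes from $d_T(a,y)+1+d_S(z,\sigma)$ to $d_T(a,x)+1+d_S(z,\sigma)$; summing and using $d_T(a,\sigma)=d_T(a,y)+d_T(y,\sigma)$,
\[
W(T')-W(T)=|S|\bigl(D_T(x)-D_T(y)-|S|\,d_T(x,y)\bigr)\le -|S|^{2}<0 .
\]
By Theorem \ref{ThmWang} the greedy tree $\Gr(B')$ is the unique minimizer of $W$ among trees with degree sequence $B'$, so $W(\Gr(B'))\le W(T')<W(\Gr(B))$; chaining over $C_0,\dots,C_m$ gives the claim.

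The hard part will be the choice of $x$ and $y$, i.e.\ realizing $\deg x\ge \deg y$ together with $D_T(x)\le D_T(y)$ inside a greedy tree. The natural approach is to list $V(\Gr(B))$ in the order the greedy algorithm processes its vertices (breadth first, heavier subtrees first) and to show that along this order the degrees are non-increasing --- immediate from the algorithm --- while the distance sums $D_{\Gr(B)}(\cdot)$ are non-decreasing. Granting this, if $\deg u>\deg v$ every degree-$u$ vertex precedes every degree-$v$ vertex, and if $\deg u=\deg v$ one picks the first and last vertices in that degree class. Proving the monotonicity of the distance sums is the real technical point; I expect it follows by induction down the rooted structure from the identity $D(c')=D(c)+n-2n_{c'}$ for a child $c'$ of $c$ (here $n_{c'}$ is the size of the subtree at $c'$), together with the facts --- built into the greedy balancing and closely tied to the concavity in Lemma \ref{optimalpath} --- that sibling subtrees occur in non-increasing order of size and that subtree sizes shrink quickly enough along every root-to-leaf path. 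The remaining ingredients, the majorization reduction and the surgery identity, are routine.
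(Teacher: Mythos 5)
The paper offers no proof of Theorem \ref{majotree} --- it is quoted from \cite{Zhang2008} --- so your argument has to stand on its own. Two of your three ingredients are fine. The majorization reduction is standard: choosing at each step the first index where the sorted $C_i$ falls below the sorted $B'$ and a later index where it exceeds it guarantees that the lowered entry is at least $2$, so every intermediate sequence is again a tree degree sequence. The surgery identity is also correct: with $S$ the pendant component of $T-yz$ not containing $x$, every cross distance changes by $d_T(a,x)-d_T(a,y)$, and since $\sum_{\sigma\in S}d_T(\sigma,x)=\sum_{\sigma\in S}d_T(\sigma,y)+|S|\,d_T(x,y)$ one gets exactly $W(T')-W(T)=|S|\bigl(D_T(x)-D_T(y)-|S|\,d_T(x,y)\bigr)$.

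The genuine gap is the one you flag yourself: the existence, in the greedy tree $T=\Gr(C_i)$, of vertices $x\neq y$ with $\deg x=b_p\geq b_q=\deg y\geq 2$ and $D_T(x)\leq D_T(y)$. For $b_p=b_q$ this is trivial, but for $b_p>b_q$ your route is to show that $D_T(\cdot)$ is non-decreasing along the greedy (breadth-first, heavier-first) order, and that monotonicity is precisely the nontrivial content here; without it the proof does not close. Within a level the claim does reduce, via $D(c')=D(c)+n-2n_{c'}$, to the fact that sibling and same-level subtree sizes are non-increasing in the greedy order, but across levels it is delicate: the last vertex of level $i$ may be a leaf, contributing an increment of $n-2$ over its parent, while the first vertex of level $i+1$ contributes only $n-2n_w$ with $n_w$ large, so ruling out a crossover needs quantitative control of how fast subtree sizes decay in a greedy tree (e.g.\ that the root is a centroid and that every level-$i$ subtree is at least as large as every level-$(i+1)$ subtree). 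None of this is established in your write-up, and it cannot be borrowed from Lemma \ref{optimalpath}, which only constrains sizes along a single maximal path. Either carry out the induction on levels, or prove directly the weaker inequality your surgery actually needs, namely $D_T(x)\leq D_T(y)+|S|\,d_T(x,y)$ for a suitable degree-$b_p$ vertex $x$ and degree-$b_q$ vertex $y$. As written, the argument is a correct outline of the standard strategy with its central lemma left unproved.
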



It is natural to expect a similar ``greedy" structure for unicyclic graphs that minimize the Wiener index, especially since we know the concavity is respected for certain paths in the graph. The main difference is for the case of unicyclic graphs, the optimal graph may not be unique. Figure \ref{fig:nonisograph} shows two different graphs with the same degree sequence and girth that both minimize the Wiener index.  

\begin{figure}[H]
 \centering
  \begin{subfigure}[b]{0.4\linewidth}
    \includegraphics[width=\linewidth]{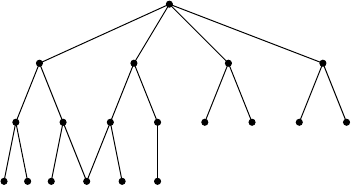}
  \end{subfigure}
  \begin{subfigure}[b]{0.4\linewidth}
    \includegraphics[width=\linewidth]{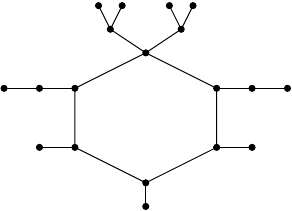}
  \end{subfigure}
  \caption{Two non-isomorphic graphs with minimum Wiener index among all unicyclic graphs of fixed $\G=6$ and given degree sequence $(4,3,3,3,3,3,3,3,2,2,1,\dots,1)$}
  \label{fig:nonisograph}
\end{figure}

\section{Structure of the optimal graph for the vertices along the cycle}\label{cycle}

Let $G $ be a unicyclic graph and $C_{\G}=v_1v_2\dots v_{\G}v_1$ be its cycle. After removing the edges of $C_{\G}$, we are left with trees $T_1,\dots,T_{\G}$ attached to each $v_i$. The unicyclic graph $G$ will then be denoted by $C_{\G}(T_1,\dots,T_{\G})$. In this section we will focus on the vertices $v_i$'s on the cycle, namely by arranging them according to the order of the subtrees attached to each vertex.

In \cite{Du2010}, the authors proposed the following formula to compute the Wiener index of a unicyclic graph:

\begin{align}\label{EqWiener}
W(G)
&=\left(n-\frac{\G}{2}\right)\left\lfloor\frac{\G^2}{4} \right\rfloor+(\G-1)\sum_{i=1}^{\G}\A_i+\sum_{i=1}^{\G}W(T_i)\\&+\sum_{i=1}^{\G-1}\sum_{j=i+1}^{\G} \ell_i\A_j+\ell_i\ell_jd_{C_{\G}}(v_i,v_j)+\ell_j\A_i  \nonumber,
\end{align}
where $\A_i=D_{T_i}(v_i)$ and $\ell_i=|V(T_i)|-1$. 

For convenience, we call a unicyclic graph \textsf{``}optimal\textsf{''} if it minimizes the Wiener index among all unicyclic graphs in $\Ud^{\G}$.
In order to understand the structure of the optimal unicyclic graph, we consider a special labelling of the vertices of the cycle $C_{\G}$, named ``$(x_0,y_0)$-labelling", according to two distinct vertices $x_0,y_0$ of $C_{\G}$. Let $P_{x_0y_0}$ be the shortest path from $x_0$ to $y_0$ and $\di =d_{C_{\G}}(x_0,y_0)$. We label the vertices of $C_{\G}$ as follows:
\begin{align*}
x_i\quad&\text{if}\quad v_i \in P_{x_0y_0}\quad\text{such that}\quad d(v_i,x_0)=i<d(v_i,y_0),\quad\text{for}\quad i=1,\dots,k_1,\\
x'_i\quad&\text{if}\quad v_i \not \in P_{x_0y_0}\quad \text{such that}\quad  d(v_i,x_0)=i<d(v_i,y_0),\quad\text{for}\quad i=1,\dots,k_2,\\
y_i\quad&\text{if}\quad v_i \in P_{x_0y_0}\quad\text{such that}\quad d(v_i,y_0)=i<d(v_i,x_0),\quad\text{for}\quad i=1,\dots,k_1,\\
y'_i\quad&\text{if}\quad v_i \not \in P_{x_0y_0}\quad \text{such that}\quad d(v_i,y_0)=i<d(v_i,x_0),\quad\text{for}\quad i=1,\dots,k_2,\\
z\quad&\text{if}\quad v_i \in P_{x_0y_0}\quad \text{such that}\quad d(v_i,x_0)=d(v_i,y_0)=\frac{\di}{2},\\
z'\quad&\text{if}\quad v_i \not \in P_{x_0y_0}\quad \text{such that}\quad d(v_i,x_0)=d(v_i,y_0)=\frac{\G-\di}{2},
\end{align*}
where $k_1=\left\lfloor\frac{\di-1}{2}\right\rfloor$ and $k_2=\left\lfloor\frac{\G-\di-1}{2}\right\rfloor$.
Note that $x_i,z,z'$ may not exist and $z$(resp. $z'$) only appear if $\di$(resp. $\G-\di$) are even.
Let $X_i,X'_i,Y_i,Y'_i,Z,Z'$ denote the component that contains the corresponding vertex. In addition, we denote $X'_{>}=\cup_{i=1}^{\left\lfloor\frac{\G}{2}\right\rfloor-\di}X'_i$ and $Y'_{>}= \cup_{i=1}^{\left\lfloor\frac{\G}{2}\right\rfloor-\di}Y'_i$. An example of $(x_0,y_0)$-labelling with $z$ and $z'$ can be seen in Figure \ref{cyclex0y0}.

\begin{figure}[H]
\centering
\begin{tikzpicture}[scale=1]
\draw [-] (0,0) -- (1,0)--(2,-1);
\draw [-](2,-1)--(2,-3)--(1,-4); 
\draw [-] (0,0) -- (-1,-1);
\draw [-](-1,-1) -- (-1,-3)--(0,-4)--(1,-4) ;
\draw [-] (0,0) -- (0.4,0.75)-- (-0.4,0.75)--(0,0) ;
\draw [-] (1,0) -- (1.4,0.75)-- (1-0.4,0.75)--(1,0) ;
\draw [-](-1,-1)--(-1.75,-1.4)--(-1.75,-0.6)--(-1,-1);
\draw [-](-1,-2)--(-1.75,-2.4)--(-1.75,-1.6)--(-1,-2);
\draw [-](-1,-3)--(-1.3,-3.8)--(-1.75,-3)--(-1,-3);
\draw [-](2,-1)--(2.3,-0.3)--(2.75,-1)--(2,-1);
\draw [-](2,-2)--(2.75,-2.4)--(2.75,-1.6)--(2,-2);
\draw [-](2,-3)--(2.75,-3.4)--(2.75,-2.6)--(2,-3);
\draw [-] (0,-4) -- (0.4,-4.75)-- (-0.4,-4.75)--(0,-4) ;
\draw [-] (1,-4) -- (1.4,-4.75)-- (1-0.4,-4.75)--(1,-4) ;
\draw (0,1) node{$X_{0}$};
\draw (1,1) node{$X_{1}$};
\draw (-2,-1) node{$X'_{1}$};
\draw (-1.8,-3.5) node{$Z'$};
\draw (-2.2,-2) node{$X'_2$};
\draw (3,-0.6) node{$Z$};
\draw (3.2,-2) node{$Y_{1}$};
\draw (3.2,-3) node{$Y_{0}$};
\draw (1,-5) node{$Y'_{1}$};
\draw (0,-5) node{$Y'_{2}$};
\fill (0,0) circle(0.05) node [below] {\small $x_{0}$};
\fill (1,0) circle(0.05) node [below] {\small $x_{1}$};
\fill (-1,-1) circle(0.05) node [right] {\small $x'_{1}$};
\fill (-1,-2) circle(0.05) node [right] {\small $x'_{2}$};
\fill (-1,-3) circle(0.05) node [right] {\small $z'$};
\fill (0,-4) circle(0.05) node [above] {\small $y'_{2}$};
\fill (2,-1) circle(0.05) node [left] {\small $z$};
\fill (2,-2) circle(0.05) node [left] {\small $y_{1}$};
\fill (2,-3) circle(0.05) node [left] {\small $y_{0}$};
\fill (1,-4) circle(0.05) node [above] {\small $y'_{1}$};

\end{tikzpicture}		
\caption{$(x_0,y_0)$-labelling}
\label{cyclex0y0}
\end{figure}
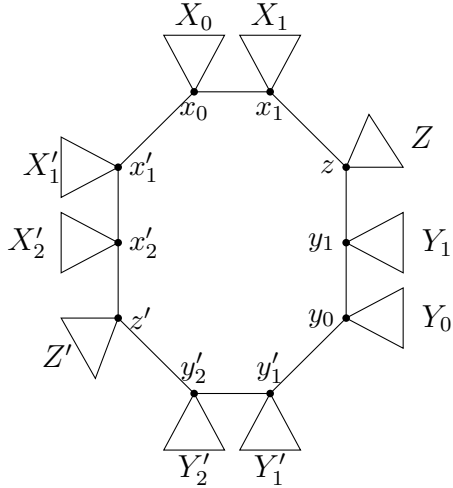

\begin{lem}\label{lemx0y0}
In the $(x_0,y_0)$-labelling, if $|V(X_i)|\geq |V(Y_i)|$ for $i=1,\dots,k_1$, $|V(X'_{>})| \geq |V(Y'_{>})|$, and $|V(X'_i)|\geq |V(Y'_i)|$ for $i=\left\lfloor\frac{\G}{2}\right\rfloor-\di+1,\dots,k_2$, then we can assume 
\begin{equation}\label{eqx0y0}
|V(X_0)|\geq |V(Y_0)|
\end{equation}in an optimal unicyclic graph.
\end{lem}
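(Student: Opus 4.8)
The plan is a tree–swapping argument. Suppose $G$ is optimal and the hypotheses hold but $|V(X_0)|<|V(Y_0)|$. Let $G'$ be obtained from $G$ by interchanging the two trees hanging off the cycle at $x_0$ and $y_0$: remove $T_{x_0}$ and $T_{y_0}$, then reattach $T_{y_0}$ at $x_0$ and $T_{x_0}$ at $y_0$, each at its former root. The cycle $C_{\G}$ and every tree $T_i$ are preserved up to relabelling, so $G'\in\Ud^{\G}$; moreover the only component sizes that change are those of $X_0$ and $Y_0$, which are simply swapped, so all hypotheses of the lemma still hold in $G'$, and there $|V(X_0)|\ge |V(Y_0)|$. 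Hence it suffices to prove $W(G')\le W(G)$: then $G'$ is also optimal and has the desired property.

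For the comparison I would use formula \eqref{EqWiener}. Write $\ell_v=|V(T_v)|-1$ and $\A_v=D_{T_v}(v)$. Passing from $G$ to $G'$ merely interchanges the triple $(\ell_{x_0},\A_{x_0},W(T_{x_0}))$ with $(\ell_{y_0},\A_{y_0},W(T_{y_0}))$ and leaves every other vertex alone, so the first three summands of \eqref{EqWiener}, the quantity $\sum_{i\ne j}\ell_{v_i}\A_{v_j}$, and the term $\ell_{x_0}\ell_{y_0}\,d_{C_{\G}}(x_0,y_0)$ are all unchanged. The only terms that move are the products $\ell_{v_i}\ell_{v_j}\,d_{C_{\G}}(v_i,v_j)$ with exactly one of $v_i,v_j$ in $\{x_0,y_0\}$, and summing their changes gives
\[
W(G')-W(G)=\bigl(|V(Y_0)|-|V(X_0)|\bigr)\sum_{v\neq x_0,y_0}\ell_v\bigl(d_{C_{\G}}(x_0,v)-d_{C_{\G}}(y_0,v)\bigr).
\]
Call the sum $S$. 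Since $|V(Y_0)|-|V(X_0)|>0$, it remains to show $S\le 0$.

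To bound $S$ I would group the cycle vertices other than $x_0,y_0$ by the reflection of $C_{\G}$ that swaps $x_0$ and $y_0$: it matches $x_i\leftrightarrow y_i$ $(1\le i\le k_1)$ and $x'_i\leftrightarrow y'_i$ $(1\le i\le k_2)$ and fixes $z,z'$ when they exist. Using $\di\le\G/2$ one computes $d_{C_{\G}}(x_0,x_i)=i$, $d_{C_{\G}}(y_0,x_i)=\di-i$; $d_{C_{\G}}(x_0,x'_i)=i$, and $d_{C_{\G}}(y_0,x'_i)=\di+i$ for $1\le i\le\lfloor\G/2\rfloor-\di$ but $=(\G-\di)-i$ for $\lfloor\G/2\rfloor-\di<i\le k_2$ — this threshold is precisely what defines $X'_{>}$; and $d_{C_{\G}}(x_0,z)=d_{C_{\G}}(y_0,z)$, $d_{C_{\G}}(x_0,z')=d_{C_{\G}}(y_0,z')$. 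By the reflection the distance difference at $y_i$ (resp.\ $y'_i$) is the negative of the one at $x_i$ (resp.\ $x'_i$), so the contribution to $S$ of the pair $\{x_i,y_i\}$ is $(2i-\di)(|V(X_i)|-|V(Y_i)|)$, of an ``outer'' pair $\{x'_i,y'_i\}$ is $(2i-\G+\di)(|V(X'_i)|-|V(Y'_i)|)$, of an ``inner'' pair $\{x'_i,y'_i\}$ is $-\di(|V(X'_i)|-|V(Y'_i)|)$, and $z,z'$ contribute $0$. Now $i\le k_1$ forces $2i-\di<0$, and $i\le k_2$ forces $2i-\G+\di<0$; together with the hypotheses $|V(X_i)|\ge|V(Y_i)|$ and (for outer $i$) $|V(X'_i)|\ge|V(Y'_i)|$, every short-path pair and every outer pair contributes $\le 0$. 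For the inner pairs the coefficient $-\di$ is constant, so their total contribution is $-\di\sum_{i=1}^{\lfloor\G/2\rfloor-\di}\bigl(|V(X'_i)|-|V(Y'_i)|\bigr)=-\di\bigl(|V(X'_{>})|-|V(Y'_{>})|\bigr)\le 0$ by the remaining hypothesis. Summing, $S\le 0$, as required.

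The genuinely delicate point is the case analysis for the distances from $y_0$ — in particular locating the index $i=\lfloor\G/2\rfloor-\di$ at which the geodesic from $y_0$ to a long-arc vertex switches from routing through $x_0$ to running directly along the long arc; this is exactly why the bundled quantity $X'_{>}$, rather than the individual $X'_i$ for small $i$, appears in the hypothesis. Everything else is bookkeeping with \eqref{EqWiener} and the reflection symmetry, and I do not expect any further obstacle.
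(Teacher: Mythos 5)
Your proposal is correct and follows essentially the same route as the paper: both swap the subtrees rooted at $x_0$ and $y_0$ and reduce to showing that the resulting change in the Wiener index, written as a sum of terms $(\text{coefficient})\cdot\bigl(|V(X_i)|-|V(Y_i)|\bigr)\cdot\bigl(|V(X_0)|-|V(Y_0)|\bigr)$ with coefficients $\di-2i$, $\di$ and $\G-\di-2i$, is nonpositive under the stated hypotheses. The only cosmetic difference is that you derive this expression from formula \eqref{EqWiener}, whereas the paper counts the pairwise distance changes directly.
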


\begin{proof}
Suppose for contradiction that \eqref{eqx0y0} does not hold. We will prove that switching $X_0$ and $Y_0$ will not increase the Wiener index.
In this operation, the distances between vertices only change if exactly one of the end vertices is in $V(X_0)\cup V(Y_0)$. 

However, for a labelling with $z$ (resp. $z'$), the distance with one end vertex in $Z$ (resp. $Z'$) and a vertex in $V(X_0)\cup V(Y_0)$ does not change. 

For any vertex in $X_i$ $(i=1,\dots,k_1)$ and any vertex in $X_0$ (resp. $Y_0$), this operation increases (resp. decreases) the distance by $\di-2i$. On the other hand, for any vertex in $Y_i$ $(i=1,\dots,k_1)$ and any vertex in $X_0$ (resp. $Y_0$), the distance decreases (resp. increases) by the same amount $\di-2i$.  

Similarly, the same holds for any vertex in $X'_{>}$ and any vertex in $X_0$ (resp. $Y_0$) and for any vertex in $Y'_{>}$  and any vertex in $X_0$ (resp. $Y_0$), with the amount of change which is now instead $\di$.

Finally, for  any vertex in $X'_i$ $(i=\left\lfloor\frac{\G}{2}\right\rfloor-\di+1 ,\dots,k_2)$ and any vertex in $X_0$ (resp. $Y_0$ and for any vertex in $Y'_i$ $(i=\left\lfloor\frac{\G}{2}\right\rfloor-\di+1 ,\dots,k_2)$ and any vertex in $X_0$ (resp. $Y_0$), we follow the same reasoning with the amount of change $\G-\di-2i$.


Thus the total amount of changes that occur is
\begin{align*}
&\sum_{i=1}^{k_1} (\di-2i)(|V(X_i)|-|V(Y_i)|)(|V(X_0)|-|V(Y_0)|)\\
&+\di (|V(X'_{>})|-|V(Y'_{>})|)(|V(X_0)|-|V(Y_0)|)\\
&+\sum_{i=\left\lfloor\frac{\G}{2}\right\rfloor-\di+1}^{k_2}(\G-\di-2i) (|V(X'_i)|-|V(Y'_i)|(|V(X_0)|-|V(Y_0)|)\leq 0.
\end{align*}

Note that the equality holds if and only if $|V(X_i)|=|V(Y_i)|$ for all $i$, $|V(X'_i)|=|V(Y'_i)|$ for all $i$, and $|V(X'_{>})|=|V(Y'_{>})|$. However, in this case, switching $X_0$ and $Y_0$ results in the same graph. 
\end{proof}

\begin{lem}\label{lemx2y2}
In the $(x_0,y_0)$-labelling, if $|V(X_i)|\geq |V(Y_i)|$ for $i=0,1,\dots,k_1$, and $|V(X'_i)|\geq |V(Y'_i)|$ for $i=\left\lfloor\frac{\G}{2}\right\rfloor-\di+1,\dots,k_2$, then we can assume 
\begin{equation}\label{eqx2y2}
|V(X'_{>})| \geq |V(Y'_{>})|
\end{equation}in an optimal unicyclic graph.
\end{lem}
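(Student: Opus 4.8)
The plan is to mimic the switching argument used in the proof of Lemma~\ref{lemx0y0}, but now exchanging the bundles $X'_{>}$ and $Y'_{>}$ instead of $X_0$ and $Y_0$. Assume for contradiction that $|V(X'_{>})| < |V(Y'_{>})|$ in an optimal graph, and consider the operation that swaps the subtrees sitting on the vertices $x'_1,\dots,x'_{\lfloor\G/2\rfloor-\di}$ with those on $y'_1,\dots,y'_{\lfloor\G/2\rfloor-\di}$ (respecting the index, so $X'_i \leftrightarrow Y'_i$). Exactly as before, the only distances that change are those between a vertex in $V(X'_{>})\cup V(Y'_{>})$ and a vertex outside both; I need to tabulate, for each remaining component class, the net change in distance and check that the resulting total change in $W$ is $\le 0$, with equality forcing the two configurations to coincide.

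First I would record the distance-change coefficients. For a vertex in $X_i$ (or $Y_i$) on the path, $i=0,1,\dots,k_1$: moving a vertex from the $X'$-side to the $Y'$-side changes its distance to a fixed $X_i$-vertex by a quantity of the form $\pm(\di - \text{something})$ — here the relevant comparison is that $X_0$ is ``closer to the $X'$ side'' than $Y_0$, so after the swap, distances to $X_i$-vertices decrease for what was in $X'_{>}$ and increase for what was in $Y'_{>}$, by the amount $\di$ (the same quantity that appeared with opposite sign in Lemma~\ref{lemx0y0}, where $X_0$ rather than $X'_{>}$ was moving). For a vertex in $X'_j$ or $Y'_j$ with $j \ge \lfloor\G/2\rfloor-\di+1$, the coefficient is $\G-\di-2j > 0$ by the range of $j$. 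For $Z$ and $Z'$ the distance is again unchanged by symmetry. Collecting these, the total change is a sum
\begin{align*}
&\sum_{i=0}^{k_1}\di\,(|V(X_i)|-|V(Y_i)|)\bigl(|V(X'_{>})|-|V(Y'_{>})|\bigr)\\
&\quad+\sum_{i=\lfloor\frac{\G}{2}\rfloor-\di+1}^{k_2}(\G-\di-2i)\,(|V(X'_i)|-|V(Y'_i)|)\bigl(|V(X'_{>})|-|V(Y'_{>})|\bigr),
\end{align*}
and by hypothesis every factor $|V(X_i)|-|V(Y_i)|$ and $|V(X'_i)|-|V(Y'_i)|$ is $\ge 0$ while $|V(X'_{>})|-|V(Y'_{>})| < 0$ and every weight $\di$, $\G-\di-2i$ is positive, so the sum is $\le 0$; hence the swap does not increase $W$, giving the contradiction (or, in the equality case, an isomorphic graph), which is exactly \eqref{eqx2y2}.

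The main obstacle I anticipate is the careful bookkeeping of the distance changes: unlike the $X_0/Y_0$ swap, here we are moving an entire union of off-path components, and one must check that for a vertex $w$ originally in some $X'_j$ (which is itself being moved), its distance to a vertex in $X_i$ really does change by the stated uniform amount independent of $j$, and that distances internal to $X'_{>}$ (or internal to $Y'_{>}$), and distances between $X'_{>}$ and the $x'_j$/$y'_j$ with large index, are handled correctly — in particular that the sign is governed solely by which side of the cycle $x_0$ versus $y_0$ lies on. I would double-check the $\di$ even/odd cases (presence or absence of $z$, $z'$) and the boundary index $i=0$, since that is the term that ties the statement back to the already-established inequality $|V(X_0)|\ge|V(Y_0)|$ from Lemma~\ref{lemx0y0}; once those checks are in place the argument is a routine repetition of the switching computation and the equality analysis is identical to that in Lemma~\ref{lemx0y0}.
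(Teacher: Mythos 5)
Your proposal follows exactly the paper's route: the same switch of $X'_{>}$ with $Y'_{>}$, the same observation that only distances with exactly one endpoint in $V(X'_{>})\cup V(Y'_{>})$ change, and the same sign analysis. There is, however, one concrete error in your bookkeeping. You claim that the distance from a vertex being moved (from position $x'_j$ to $y'_j$) to a vertex in $X_i$ changes by the uniform amount $\di$ for every $i=0,\dots,k_1$, arguing that this is ``the same quantity'' as in Lemma~\ref{lemx0y0}. That quantity $\di$ is correct only for $i=0$; for general $i$ the change is $d(y'_j,x_i)-d(x'_j,x_i)=(j+\di-i)-(j+i)=\di-2i$, which is what the paper's formula records (the coefficient $\di$ in Lemma~\ref{lemx0y0} arose specifically from the $X'_{>}$--$X_0$ interaction, not from the $X'_{>}$--$X_i$ interactions with $i\geq 1$). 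Fortunately the slip is harmless to the conclusion: since $i\leq k_1=\lfloor(\di-1)/2\rfloor$, one still has $\di-2i>0$, so every weight in the corrected sum is positive, the hypothesized inequalities make each factor $|V(X_i)|-|V(Y_i)|$ and $|V(X'_i)|-|V(Y'_i)|$ nonnegative, and the common factor $|V(X'_{>})|-|V(Y'_{>})|$ is negative, giving a total change $\leq 0$ exactly as you argue. One further small point: in the equality case the swapped graph need not be isomorphic to the original (unlike the situation in Lemma~\ref{lemx0y0}); what matters is only that it is again optimal and now satisfies \eqref{eqx2y2}, which is all the ``we can assume'' formulation requires.
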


\begin{proof}
Suppose for contradiction that \eqref{eqx2y2} does not hold. We will prove that switching $X'_{>}$ and $Y'_{>}$ will not increase the Wiener index.
In this operation, the distances between vertices only change if exactly one of the end vertices is in $V(X'_{>})\cup V(Y'_{>})$. Using the same reasoning as in Lemma \ref{lemx0y0}, the total amount of changes is computed as follows:
\begin{align*}
&\sum_{i=0}^{k_1} (\di-2i)(|V(X_i)|-|V(Y_i)|)(|V(X'_>)|-|V(Y'_>)|)\\
&+\sum_{i=\left\lfloor\frac{\G}{2}\right\rfloor-\di+1}^{k_2}(\G-\di-2i) (|V(X'_i)|-|V(Y'_i)|(|V(X'_>)|-|V(Y'_>)|)< 0.
\end{align*}
\end{proof}

\begin{lem}\label{lemx1y1}
In the $(x_0,y_0)$-labelling, if $d_{C_{\G}}(x_0,y_0)=1,2$, then
 \begin{equation*}
 |V(X_0)|\geq|V(Y_0)|\Longleftrightarrow|V(X'_>)|\geq |V(Y'_{>})|,
 \end{equation*}
in an optimal unicyclic graph.
\end{lem}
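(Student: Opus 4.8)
The plan is to obtain the two implications of the equivalence directly from Lemmas~\ref{lemx0y0} and~\ref{lemx2y2}; the whole point is that the hypothesis $\di:=d_{C_\G}(x_0,y_0)\in\{1,2\}$ makes essentially every condition needed to invoke those two lemmas either vacuous or trivially arrangeable.

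First I would record the degeneracy of the labelling. Since $\di\in\{1,2\}$ we have $k_1=\left\lfloor\frac{\di-1}{2}\right\rfloor=0$, so the $(x_0,y_0)$-labelling produces no cycle vertices $x_i,y_i$ with $i\ge 1$: the short arc from $x_0$ to $y_0$ consists only of $x_0,y_0$ and, when $\di=2$, the midpoint $z$. Hence the condition ``$|V(X_i)|\ge|V(Y_i)|$ for $i=1,\dots,k_1$'' in Lemma~\ref{lemx0y0} is empty, and ``$|V(X_i)|\ge|V(Y_i)|$ for $i=0,1,\dots,k_1$'' in Lemma~\ref{lemx2y2} is precisely $|V(X_0)|\ge|V(Y_0)|$. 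Next I would check that $\left\lfloor\frac{\G}{2}\right\rfloor-\di=k_2$, so the ``tail'' index range $i=\left\lfloor\frac{\G}{2}\right\rfloor-\di+1,\dots,k_2$ appearing in both lemmas is empty --- with the single exception $\di=2$, $\G$ odd, where it reduces to the one index $i=k_2=\frac{\G-3}{2}$ and the components $X'_{k_2},Y'_{k_2}$ hang from adjacent cycle vertices. In that case I would first arrange $|V(X'_{k_2})|\ge|V(Y'_{k_2})|$ by choosing which of the two given cycle vertices is called $x_0$ (equivalently, by a switching argument on the symmetric pair $X'_{k_2},Y'_{k_2}$ of the same kind as in Lemma~\ref{lemx0y0}); interchanging $x_0$ and $y_0$ merely reverses both inequalities in the equivalence to be proved, so it is harmless. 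After this normalisation the ``tail'' hypothesis ``$|V(X'_i)|\ge|V(Y'_i)|$ for $i=\left\lfloor\frac{\G}{2}\right\rfloor-\di+1,\dots,k_2$'' of each lemma is satisfied.

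The equivalence then follows in one line each way. If $|V(X'_{>})|\ge|V(Y'_{>})|$, all the hypotheses of Lemma~\ref{lemx0y0} hold, so in the optimal graph $|V(X_0)|\ge|V(Y_0)|$. Conversely, if $|V(X_0)|\ge|V(Y_0)|$, all the hypotheses of Lemma~\ref{lemx2y2} hold, so $|V(X'_{>})|\ge|V(Y'_{>})|$. Exactly as in the proofs of those two lemmas, the degenerate situations in which some of the relevant size inequalities are equalities are harmless: there the corresponding switch produces the same graph, and one may simply take the labelling for which both sides of the equivalence hold simultaneously.

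The point I expect to need the most care is the exceptional case $\di=2$ with $\G$ odd: one has to be sure that the normalisation $|V(X'_{k_2})|\ge|V(Y'_{k_2})|$ can be read off the same optimal graph that Lemmas~\ref{lemx0y0} and~\ref{lemx2y2} then use to orient $(X_0,Y_0)$ and $(X'_{>},Y'_{>})$, without forcing a conflicting choice; one way to see this cleanly is to treat that tail pair by its own switching argument rather than by the orientation of $x_0,y_0$. Away from this case the proof is nothing more than the substitution $k_1=0$ into the two preceding lemmas.
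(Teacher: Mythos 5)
Your argument is correct and lands in the right place, but it takes a genuinely different route from the paper's. The paper re-runs the switching computation for the single exchange $X_0\leftrightarrow Y_0$ and reads the equivalence off the sign of $\di\,(|V(X'_{>})|-|V(Y'_{>})|)(|V(X_0)|-|V(Y_0)|)$, whereas you deduce the two implications by feeding the degenerate hypotheses ($k_1=0$) into Lemmas \ref{lemx0y0} and \ref{lemx2y2}. The substantive difference is your exceptional case: the paper asserts $k_2=\left\lfloor\frac{\G-\di-1}{2}\right\rfloor=\left\lfloor\frac{\G}{2}\right\rfloor-\di$, but this identity fails exactly when $\di=2$ and $\G$ is odd, where $k_2=\frac{\G-3}{2}=\left\lfloor\frac{\G}{2}\right\rfloor-\di+1$; there one pair $X'_{k_2},Y'_{k_2}$ survives outside $X'_{>},Y'_{>}$ and contributes an extra term $(\G-\di-2k_2)(|V(X'_{k_2})|-|V(Y'_{k_2})|)(|V(X_0)|-|V(Y_0)|)$, with coefficient $1$, which the paper's displayed formula omits. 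Your fix --- choose which endpoint is named $x_0$ so that $|V(X'_{k_2})|\ge|V(Y'_{k_2})|$, and observe that the real content of the lemma (the two differences cannot have strictly opposite signs) is invariant under swapping the names --- is clean and actually repairs a gap in the paper's own proof; I would keep that and drop the vaguer alternative of giving the tail pair ``its own switching argument'', since such an argument would itself need hypotheses you have not established. One caveat, shared with the paper: Lemma \ref{lemx0y0} only concludes that $|V(X_0)|\ge|V(Y_0)|$ ``can be assumed'' after a Wiener-index-preserving exchange, so in the cases where one of the two differences vanishes neither treatment proves the literal biconditional, only the sign-compatibility; your closing paragraph acknowledges this, and it is the form in which the lemma is later used in Claim 1 of Theorem \ref{thm:cycle}.
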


\begin{proof}
Note that in this situation, there is no $x_i$ and $k_2=\left\lfloor\frac{\G-\di-1}{2}\right\rfloor=\left\lfloor\frac{\G}{2}\right\rfloor-\di $. Similar to the proof of Lemma \ref{lemx0y0}, the total amount of changes in the Wiener index, by switching $X_0$ and $Y_0$, that occur is
\begin{align*}
& \di(|V(X'_{>})|-|V(Y'_{>})|)(|V(X_0)|-|V(Y_0)|),
\end{align*}
which is negative if and only if $|V(X'_{>})|-|V(Y'_{>})|$ and $|V(X_0)|-|V(Y_0)|$ have different signs.
\end{proof}

\begin{corollary} \label{corx0y0}
In the $(x_0,y_0)$-labelling, if $|V(X_i)|\geq |V(Y_i)|$ for $i=1,\dots,k_1$, $|V(X'_{>})|\geq |V(Y'_{>})|$ and $|V(X'_i)|\geq |V(Y'_i)|$ for $i=\left\lfloor\frac{\G}{2}\right\rfloor-\di+1,\dots,k_2$, then we can assume 
$d(x_0)\geq d(y_0)$ in an optimal unicyclic graph.
\end{corollary}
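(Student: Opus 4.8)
The plan is to deduce the corollary from Lemma~\ref{lemx0y0} by upgrading the inequality on component orders to the desired inequality on degrees. First I would invoke Lemma~\ref{lemx0y0}: under the stated hypotheses we may fix an optimal unicyclic graph $G$ in which $|V(X_0)|\geq|V(Y_0)|$. Writing $a=\deg_{X_0}(x_0)$ and $b=\deg_{Y_0}(y_0)$, we have $d(x_0)=a+2$ and $d(y_0)=b+2$, so it is enough to show that we may assume $a\geq b$. Suppose, towards a contradiction, that $a<b$.

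Second, I would use Theorem~\ref{ThmWang} together with Remark~\ref{atmost2vces} (a maximal path that stays inside one of the pendant trees meets the cycle only in $x_0$, resp.\ $y_0$) to assume that $X_0$ and $Y_0$ are arranged as optimal trees rooted at their cycle vertices; in particular the child-subtrees $A_1,\dots,A_a$ of $x_0$ and $B_1,\dots,B_b$ of $y_0$ can be taken with nonincreasing orders. Since $\sum_{i=1}^{a}|V(A_i)|=|V(X_0)|-1\geq|V(Y_0)|-1=\sum_{j=1}^{b}|V(B_j)|$ while $a<b$, an averaging argument forces $|V(A_1)|>|V(B_b)|$. I would then perform a degree-sequence-preserving rearrangement that transfers the ``excess branching'' from $y_0$ to $x_0$: detach the branch rooted at the last child of $y_0$, reattach it inside $X_0$, and compensate with a pendant edge moved in the opposite direction, so that $D$ is restored with $\deg(x_0)$ increased and $\deg(y_0)$ decreased. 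Its effect on $W(G)$ is then estimated exactly as in Lemma~\ref{lemx0y0}: each affected pair has precisely one endpoint in the moved piece, so the net change splits as a sum of terms, each a product of an order-difference, a signed cycle-displacement, and the nonnegative quantity $|V(X_0)|-|V(Y_0)|$, with the internal Wiener terms handled through formula~\eqref{EqWiener}. Iterating while $b-a>0$ yields an optimal graph with $d(x_0)\geq d(y_0)$ in which the order inequalities of the hypothesis still hold, which is the claim.

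The step I expect to be the main obstacle is exactly this last exchange. The naive move ``relocate one child of $y_0$ to $x_0$'' changes the degree multiset unless $d(y_0)=d(x_0)+1$, so it must be coupled with a compensating modification elsewhere and the combined change in $W$ controlled carefully; one must also rule out the a priori possibility that, even with $|V(X_0)|\geq|V(Y_0)|$, the contribution of the remaining cycle vertices and their pendant trees makes $D_G(x_0)>D_G(y_0)$ and flips the sign of the naive estimate. Producing a single degree-sequence-preserving rearrangement whose Wiener change is manifestly nonpositive under the hypotheses of Lemma~\ref{lemx0y0} — rather than a case analysis depending on the magnitude of $D_G(x_0)-D_G(y_0)$ — is the crux; once that is in place the corollary follows with no further work.
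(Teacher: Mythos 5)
Your proposal correctly identifies the crux---a single degree-sequence-preserving rearrangement that transfers branching from $y_0$ to $x_0$ with manifestly nonpositive effect on $W$---but it does not supply it, and the route sketched around it does not close. Moving one child of $y_0$ and ``compensating with a pendant edge moved in the opposite direction'' is never made precise, and it is not clear such a compensation exists in general: the pendant edge must come from somewhere without creating a degree that is absent from $D$ and without disturbing the order hypotheses you need for the next iteration. Moreover, reattaching the detached branch ``inside $X_0$'' rather than at $x_0$ itself would not change $d(x_0)$ at all, so even a successful iteration of that step would not yield $d(x_0)\geq d(y_0)$. The auxiliary steps (imposing greedy structure on $X_0,Y_0$ via Theorem~\ref{ThmWang}, the averaging argument giving $|V(A_1)|>|V(B_b)|$) are not needed and do not feed into any completed estimate.

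The paper resolves the difficulty in one stroke. Assuming $d(x_0)<d(y_0)$, set $s=d(y_0)-d(x_0)$ and move $s$ neighbours $v_1,\dots,v_s$ of $y_0$ (other than its two cycle neighbours $y_1,y_1'$), together with their pendant subtrees $S=\cup_{h=1}^{s}V_h$, so that each $v_h$ becomes a neighbour of $x_0$. This single bulk operation exactly swaps the degrees of $x_0$ and $y_0$, hence preserves $D$; the change in $W$ then decomposes exactly as in Lemma~\ref{lemx0y0} with $S$ playing the role of the relocated set, plus one extra term $\di|S|\bigl(|V(Y_0)|-|S|-|V(X_0)|\bigr)$, which is strictly negative because $|S|\geq 1$ and, by Lemma~\ref{lemx0y0}, $|V(X_0)|\geq|V(Y_0)|$. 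This contradicts optimality without any iteration or case analysis. To salvage your scheme you would need to move subtrees in a batch of exactly size $s$; moving them one at a time cannot preserve the degree sequence.
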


\begin{proof}
Let $G$ be an optimal unicyclic graph.
Suppose (for contradiction) that $d(x_0)<d(y_0)$. Set $s=d(y_0)-d(x_0)$ and let $v_1,v_2,\dots,v_s$ be neighbours of $y_0$ other than $y_{1}$ and $y'_{1}$. We perform the following operation: remove the $s$ edges $y_0v_h$ and add edges $x_0v_h$ $(h=1,\dots,s)$ instead. We will show that this operation will not increase the Wiener index. Note that the degree sequence is preserved and then $d(x_0) \geq d(y_0)$. Let $V_h$ be the subtree attached to $v_h$ after the removal of $y_0v_h$. Let $S=\cup_{h=1}^sV_h$. The distances between vertices only change if exactly one of the end vertices is in $S$.
Similar to the previous proof, the total change of the Wiener index in this operation is

\begin{align*}
&\sum_{i=1}^{k_1} (\di-2i)(|V(Y_i)|-|V(X_i)|)|S|+\di (|V(Y'_{>})|-|V(X'_{>})|)|S|\\
&+\sum_{i=\left\lfloor\frac{\G}{2}\right\rfloor-\di+1}^{k_2} (\G-\di-2i) (|V(Y'_i)|-|V(X'_i)||S|)+\di|S|(|V(Y_0)|-|S|-|V(X_0)|)\\& < 0.
\end{align*}
Note that since $|S|\geq 1$, the last term is always negative.
\end{proof}


For a cycle in an optimal unicyclic graph, we can label the vertices of $C_{\G}$ as $w_1,w_2,\dots$ and $u_1,u_2,\dots$ and the components as $W_i$ and $U_i$, while $U_1$ is the component with most vertices (Figure \ref{fig:cycleC}) such that the following holds:

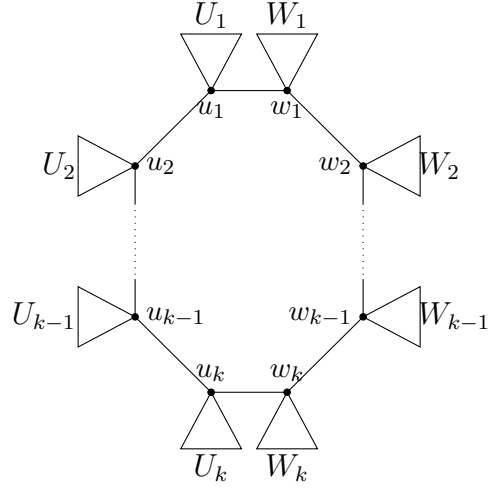
\begin{figure}
\centering
\begin{tikzpicture}[scale=1]
\draw [-] (0,0) -- (1,0)--(2,-1) -- (2,-1.5);
\draw [-](2,-2.5)--(2,-3)--(1,-4); 
\draw [-] (0,0) -- (-1,-1)--(-1,-1.5);
\draw [-](-1,-2.5) -- (-1,-3)--(0,-4)--(1,-4) ;
\draw [dotted](2,-1.5)--(2,-2.5);
\draw [dotted](-1,-1.5)--(-1,-2.5);
\draw [-] (0,0) -- (0.4,0.75)-- (-0.4,0.75)--(0,0) ;
\draw [-] (1,0) -- (1.4,0.75)-- (1-0.4,0.75)--(1,0) ;
\draw [-](-1,-1)--(-1.75,-1.4)--(-1.75,-0.6)--(-1,-1);
\draw [-](-1,-3)--(-1.75,-3.4)--(-1.75,-2.6)--(-1,-3);
\draw [-](2,-1)--(2.75,-1.4)--(2.75,-0.6)--(2,-1);
\draw [-](2,-3)--(2.75,-3.4)--(2.75,-2.6)--(2,-3);
\draw [-] (0,-4) -- (0.4,-4.75)-- (-0.4,-4.75)--(0,-4) ;
\draw [-] (1,-4) -- (1.4,-4.75)-- (1-0.4,-4.75)--(1,-4) ;
\draw (0,1) node{$U_{1}$};
\draw (1,1) node{$W_{1}$};
\draw (-2,-1) node{$U_{2}$};
\draw (-2.2,-3) node{$U_{k-1}$};
\draw (3,-1) node{$W_2$};
\draw (3.2,-3) node{$W_{k-1}$};
\draw (1,-5) node{$W_{k}$};
\draw (0,-5) node{$U_{k}$};
\fill (0,0) circle(0.05) node [below] {\small $u_{1}$};
\fill (1,0) circle(0.05) node [below] {\small $w_{1}$};
\fill (-1,-1) circle(0.05) node [right] {\small $u_{2}$};
\fill (-1,-3) circle(0.05) node [right] {\small $u_{k-1}$};
\fill (0,-4) circle(0.05) node [above] {\small $u_{k}$};
\fill (2,-1) circle(0.05) node [left] {\small $w_{2}$};
\fill (2,-3) circle(0.05) node [left] {\small $w_{k-1}$};
\fill (1,-4) circle(0.05) node [above] {\small $w_{k}$};
\end{tikzpicture}		
\caption{Labelling of $C_{\G}$ with an even $\G$.}
\label{fig:cycleC}
\end{figure}

\begin{theorem} \label{thm:cycle}
In an optimal unicyclic graph labelled as in Figure \ref{fig:cycleC}, the following holds:
\begin{equation*}
|V(U_1)| \geq |V(W_1)| \geq |V(U_2)| \geq |V(W_2)|\geq \cdots \geq |V(U_k)|\geq |V(W_k)|,
\end{equation*}
if $\G=2k$; and
\begin{equation*}
|V(U_1)| \geq |V(W_1)| \geq |V(U_2)| \geq |V(W_2)|\geq \cdots \geq |V(U_{k-1})|\geq |V(W_{k-1})|\geq |V(U_{k})|,
\end{equation*}
if $\G=2k-1$.
\end{theorem}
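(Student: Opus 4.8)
The plan is to prove the chain of inequalities by a repeated application of the $(x_0,y_0)$-labelling lemmas, choosing the two base vertices $x_0,y_0$ to be a consecutive pair $u_i,w_i$ (or $w_i,u_{i+1}$) along the cycle and using a downward induction on the distance $\di = d_{C_\G}(x_0,y_0)$. Concretely, to establish $|V(U_i)| \ge |V(W_i)|$ we take $x_0 = u_i$, $y_0 = w_i$, so that $\di = d_{C_\G}(u_i,w_i)$ is as small as possible among the pairs not yet handled; the components $X_j, Y_j$ for $j \ge 1$ and the ``far'' components $X'_{>}, Y'_{>}$ together with the $X'_j, Y'_j$ for large $j$ are then exactly the components $|V(U_{i'})|, |V(W_{i'})|$ for indices $i'$ that are strictly closer to each other on the cycle, i.e. correspond to inequalities already proven at earlier stages of the induction. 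Once all the hypotheses $|V(X_j)| \ge |V(Y_j)|$ etc. of Lemma \ref{lemx0y0} (or Lemma \ref{lemx1y1} when $\di \le 2$, or Lemma \ref{lemx2y2}) are in place, the conclusion $|V(X_0)| \ge |V(Y_0)|$, i.e. $|V(U_i)| \ge |V(W_i)|$, follows; and similarly with the roles of $u$ and $w$ appropriately swapped one gets $|V(W_i)| \ge |V(U_{i+1})|$.

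First I would set up the induction carefully: order the unordered consecutive pairs $\{u_i,w_i\}$ and $\{w_i,u_{i+1}\}$ on $C_\G$ by their cyclic distance, starting from the antipodal-most pair (distance $\lfloor \G/2\rfloor$) and working down to the adjacent pair $\{u_1,w_1\}$ (distance $1$). For a pair at distance $\di$, apply the $(x_0,y_0)$-labelling: the crucial bookkeeping step is to check that every component appearing on the ``$X$ side'' of that labelling has at least as many vertices as the corresponding component on the ``$Y$ side'', where the matching is dictated by the single global cyclic order $u_1,w_1,u_2,w_2,\dots$ we are trying to prove. Because the $X$-side vertices are precisely those lying on the $u$-side of the path from $x_0$ to $y_0$ and the $Y$-side vertices those on the $w$-side, and because all such vertices are strictly closer to their partner than $x_0$ is to $y_0$, every required inequality $|V(X_j)| \ge |V(Y_j)|$ (and $|V(X'_{>})| \ge |V(Y'_{>})|$, and $|V(X'_j)| \ge |V(Y'_j)|$ for large $j$) is an instance of the induction hypothesis. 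Then Lemma \ref{lemx0y0} (with the degenerate cases $\di = 1,2$ handled by Lemma \ref{lemx1y1}, invoking also Lemma \ref{lemx2y2} to produce $|V(X'_{>})| \ge |V(Y'_{>})|$ when needed) gives the new inequality. I would treat the even girth $\G = 2k$ and odd girth $\G = 2k-1$ cases in parallel, the only difference being whether the innermost pair is $\{u_k,w_k\}$ with $\di$ possibly even (so a $z$-component appears) or $\{w_{k-1},u_k\}$, etc.; Lemma \ref{lemx0y0} already accounts for the presence or absence of $z,z'$.

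The main obstacle I anticipate is the combinatorial bookkeeping of the $(x_0,y_0)$-labelling: one must verify that, for the chosen consecutive pair, the partition of $V(C_\G)\setminus\{x_0,y_0\}$ into $X$-type and $Y$-type components genuinely aligns with the global order being proven, and in particular that the ``overflow'' components $X'_{>}, Y'_{>}$ (which aggregate several of the $X'_i$) and the individual far components $X'_i$ with $i > \lfloor\G/2\rfloor - \di$ match up correctly with earlier-proven inequalities — this is where the asymmetry between $\ell/2$, $\lfloor \di/2\rfloor$ and $\lfloor(\G-\di)/2\rfloor$ can cause off-by-one confusion, especially at the transition where $\di$ changes parity. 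A clean way to handle this is to fix the indexing once and for all by identifying $v_i$ on the cycle with its position in the sequence $u_1,w_1,u_2,w_2,\dots,u_k,w_k$ (or the odd-girth analogue) and then checking the lemma hypotheses purely in terms of these indices; after that the inequalities follow formally. It is also worth remarking, as the authors surely intend, that strict inequalities need not hold (ties are genuinely possible, which is why the optimal graph is not unique), so the conclusion is only the non-strict chain stated; no uniqueness claim is made or needed here.
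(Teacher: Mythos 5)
Your overall strategy --- establishing the chain by repeatedly applying Lemmas \ref{lemx0y0}, \ref{lemx1y1} and \ref{lemx2y2} to consecutive pairs $\{u_i,w_i\}$ and $\{w_i,u_{i+1}\}$ and inducting over these pairs --- is the same as the paper's, but your induction runs in the wrong direction, and this leaves two genuine gaps. You order the pairs by decreasing cyclic distance, starting from the antipodal-most pair; yet, as you yourself observe, the individual hypotheses of Lemma \ref{lemx0y0} for a pair at distance $\di$ (the conditions $|V(X_j)|\geq|V(Y_j)|$ and the far conditions $|V(X'_j)|\geq|V(Y'_j)|$) concern pairs at \emph{strictly smaller} cyclic distance. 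In your ordering those pairs are processed later, so at the very first step none of the needed hypotheses are available and the induction cannot start. The paper runs the induction the other way: it anchors everything at the two distance-one pairs, getting $|V(U_1)|\geq|V(W_1)|\geq|V(U_2)|$ from the choice of $U_1$ as the largest branch together with a rearrangement of the term $\sum_{i<j}\ell_i\ell_j d_{C_{\G}}(v_i,v_j)$ in \eqref{EqWiener}, transfers this to $|V(U_k)|\geq|V(W_k)|$ via two applications of Lemma \ref{lemx1y1} (Claim 1), and then moves inward through the index pairs $(i,k-i+1)$, i.e.\ outward in cyclic distance. Your proposal contains no base case at all.

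The second gap is the aggregate hypothesis $|V(X'_{>})|\geq|V(Y'_{>})|$. This is a sum over pairs some of which lie at \emph{larger} cyclic distance than the pair being treated, so it is not a consequence of previously established individual inequalities in either ordering; and you cannot obtain it by invoking Lemma \ref{lemx2y2} at the same base pair $(x_0,y_0)$, because that lemma's hypotheses include $|V(X_0)|\geq|V(Y_0)|$ --- exactly the inequality you are trying to prove --- so the argument would be circular. The paper's Claim 2 sidesteps this by applying Lemma \ref{lemx2y2} at an already-settled pair (so that the aggregate $\sum_{i=2}^{k-1}|V(U_i)|\geq\sum_{i=2}^{k-1}|V(W_i)|$ is forced by optimality) and then peeling off the $i=2$ terms using the contradiction hypothesis $|V(W_2)|>|V(U_2)|$ to obtain the aggregate needed at the next pair. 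Without an explicit step of this kind, writing that Lemma \ref{lemx2y2} produces $|V(X'_{>})|\geq|V(Y'_{>})|$ ``when needed'' does not close the argument.
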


\begin{proof}
We only show the proof for an even $\G=2k$, the odd case is similar. We can assume that $|V(U_1)| \geq |V(W_1)| \geq |V(U_2)|$, where $U_1$ is the branch with the most vertices. In fact, from \eqref{EqWiener}, switching the branches $T_i$'s will only  affect the sum $\sum_{i=1}^{\G-1}\sum_{j=i+1}^{\G} \ell_i\ell_jd_{C_{\G}}(u_i,u_j)$, which means branches with the most number of vertices must be closer to each other. 

\vspace{1cm}
\textbf{Claim 1:} If $|V(U_1)| \geq |V(W_1)|$, then $|V(U_k)| \geq |V(W_k)|$.
\begin{proof}[Proof of Claim 1]
Suppose it is not true, i.e. $|V(U_k)| < |V(W_k)|$. 
By Lemma \ref{lemx1y1} in taking $x_0=u_k$ and $y_0=w_k$, we have 
\begin{align*}
|X'_{>}|=\sum_{i=1}^{k-1}|V(U_i)|&\leq \sum_{i=1}^{k-1}|V(W_i)|=|Y'_{>}|\\
\sum_{i=1}^k|V(U_i)|&<\sum_{i=1}^k|V(W_i)|.
\end{align*} 
However, by the same Lemma in considering $x_0=u_1$ and $y_0=w_1$, we get 
\begin{align*}
\sum_{i=2}^{k}|V(U_i)|&\geq \sum_{i=2}^{k}|V(W_i)|\\
\sum_{i=1}^{k}|V(U_i)|&\geq \sum_{i=1}^{k}|V(W_i)|\\
\end{align*} which yields a contradiction.
\end{proof}

\textbf{Claim 2:} If $|V(U_1)| \geq |V(W_1)|$, then $|V(U_2)|\geq |V(W_2)|$. 
\begin{proof}[Proof of Claim 2]
Suppose for contradiction that $|V(U_2)|<|V(W_2)|$. We may assume that $\sum_{i=2}^{k-1}|V(U_i)|\geq \sum_{i=2}^{k-1}|V(W_i)|$. In fact, if $\sum_{i=2}^{k-1}|V(U_i)|< \sum_{i=2}^{k-1}|V(W_i)|$, from Lemma \ref{lemx2y2}, switching $\cup_{i=2}^{k-1}U_i$ with $\cup_{i=2}^{k-1}W_i$ results in a graph with a smaller Wiener index, which contradicts the optimality of the graph. Hence
\begin{align*}
\sum_{i=2}^{k-1}|V(U_i)|&\geq \sum_{i=2}^{k-1}|V(W_i)|\\
|V(U_2)|+\sum_{i=3}^{k-1}|V(U_i)|&\geq \sum_{i=3}^{k-1}|V(W_i)|+|V(W_2)|\\
\sum_{i=3}^{k-1}|V(U_i)|&\geq \sum_{i=3}^{k-1}|V(W_i)|+\underbrace{|V(W_2)|-|V(U_2)|}_{>0}> \sum_{i=3}^{k-1}|V(W_i)|,
\end{align*}
then, we can use Lemma \ref{lemx0y0}, by taking $x_0=u_2$ and $y_0=w_2$. All the required conditions are satisfied since $|V(U_1)|\geq |V(W_1)|$, $V(U_k)\geq |V(W_k)|$, and $\sum_{3}^{k}|V(U_i)|\geq \sum_{3}^{k}|V(W_i)|$. Thus, $|V(U_2)|\geq |V(W_2)|$ which yields a contradiction.
\end{proof}

We use the same approach as in Claim 2, to prove that if $|V(U_k)| \geq |V(W_k)|$, then $|V(U_{k-1})|\geq |V(W_{k-1})|$. We iterate the process for the next $i$ and $k-i+1$ to finally get $|V(U_{i})|\geq |V(W_i)|$, for $i=1,\dots, k$.

Moreover, using the same pattern as in Claim 1, we prove that if $|V(W_1)|\geq |V(U_2)|$, then $|V(W_{k-1})|\geq |V(U_{k})|$ (Lemma \ref{lemx1y1}). Then, we may perform as in Claim 2 to show that $|V(W_2)|\geq |V(U_3)|$ and symmetrically $|V(W_{k-2})|\geq |V(U_{k-3})|$. Indeed, all the conditions in Lemma \ref{lemx0y0} are satisfied, for $x_0=w_2$ and $y_0=u_3$ as well as for $x_0=w_{k-2}$ and $y_0=u_{k-3}$. By iterating the process for the next $i$ and $k-i+1$, the following $|V(W_{i-1})|\geq |V(U_i)|$ holds for $i=2,\dots,k$.
\end{proof}

\begin{corollary} \label{Cor:deg}
In an optimal unicyclic graph, for a cycle with labelling as in Theorem \ref{thm:cycle}, we have
\begin{equation*}
d(u_1) \geq d(w_1) \geq d(u_2) \geq d(w_2)\geq \cdots \geq d(u_k)\geq d(v_k)
\end{equation*}
if $\G$ is even; and
\begin{equation*}
d(u_1) \geq d(w_1) \geq d(u_2) \geq d(w_2)\geq \cdots \geq d(u_{k-1})\geq d(w_{k-1})\geq d(u_{k})
\end{equation*}
if $\G$ is odd.
\end{corollary}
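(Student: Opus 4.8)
The plan is to obtain the degree concavity as a direct consequence of the size concavity already established in Theorem~\ref{thm:cycle}, by feeding it into Corollary~\ref{corx0y0} once for each pair of cycle vertices that is consecutive in the target order $u_1,w_1,u_2,w_2,\dots$. First I would take $G$ optimal with its cycle labelled as in Figure~\ref{fig:cycleC}, so that Theorem~\ref{thm:cycle} is in force and supplies the chain $|V(U_1)|\ge|V(W_1)|\ge|V(U_2)|\ge\cdots$. It then suffices to prove $d(a)\ge d(b)$ in $G$ for every pair $\{a,b\}$ that is consecutive in that chain, i.e.\ $\{a,b\}=\{u_i,w_i\}$ or $\{a,b\}=\{w_i,u_{i+1}\}$ with $a$ the earlier vertex.

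Fixing such a pair, I would pass to the $(a,b)$-labelling of $C_\G$, with $x_0=a$, $y_0=b$, $\di=d_{C_\G}(a,b)$, and identify the components $X_j,Y_j,X'_j,Y'_j,Z,Z'$ of this labelling in terms of the $U_m,W_m$. The crucial observation is that, because $a$ precedes $b$ in the size chain, the geodesic $P_{ab}$ is traversed ``symmetrically'', so that each on-path component $X_j$ sits exactly one position ahead of the matching $Y_j$ in the chain of Theorem~\ref{thm:cycle}: concretely $X_j=U_{i-j}$ pairs with $Y_j=W_{i-j}$ for $\{u_i,w_i\}$, and $X_j=W_{i-j}$ pairs with $Y_j=U_{i+1-j}$ for $\{w_i,u_{i+1}\}$, while $X'_>$ against $Y'_>$ and $X'_j$ against $Y'_j$ are dominated term by term. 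Hence all hypotheses of Corollary~\ref{corx0y0} hold, namely $|V(X_j)|\ge|V(Y_j)|$ for $1\le j\le k_1$, $|V(X'_>)|\ge|V(Y'_>)|$, and $|V(X'_j)|\ge|V(Y'_j)|$ for $\lfloor\G/2\rfloor-\di<j\le k_2$; moreover $|V(X_0)|\ge|V(Y_0)|$ also holds (again since $a$ precedes $b$ in the chain), which is exactly the input that makes the last term in the estimate of that corollary strictly negative. Consequently, if $d(a)<d(b)$ the edge-moving operation of Corollary~\ref{corx0y0} (removing the $d(b)-d(a)$ non-cycle edges at $b$ and re-attaching them at $a$) would \emph{strictly} lower $W(G)$ while preserving girth, connectivity and the degree sequence — and this operation is always available because $d(b)>d(a)\ge2$ forces $d(b)\ge3$ — contradicting the optimality of $G$. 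Thus $d(a)\ge d(b)$ for every consecutive pair, which is precisely the asserted chain; the odd case $\G=2k-1$ is handled the same way, with $w_{k-1}$ playing the role of the final vertex on the $w$-arm.

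The hard part will be the bookkeeping in the identification step: one must carry it out for $\G$ even and $\G$ odd, for each of the two directions in which $P_{ab}$ can wrap around $C_\G$ (with a genuine tie when $\G$ is even and $\{a,b\}$ is close to antipodal), and according to whether the vertices $z,z'$ occur — for instance $\di=2i-1$ is odd for the short geodesic of $\{u_i,w_i\}$, giving no $z$, whereas $\di=2i$ for $\{w_i,u_{i+1}\}$ gives $z=u_1$, and the ``far'' geodesic instead gives $z=w_k$. None of these cases is deep, but it is where a careless split would leave a gap; I would organise the verification by treating $\{u_i,w_i\}$ first and $\{w_i,u_{i+1}\}$ second, and noting that choosing the ``far'' geodesic merely exchanges the roles of the two arms past the midpoint, so the one-step-ahead matching — and hence the reduction to Theorem~\ref{thm:cycle} — survives unchanged in every case.
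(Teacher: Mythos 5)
Your proposal is correct and follows essentially the same route as the paper: the paper's proof also applies Corollary~\ref{corx0y0} to the pairs $(x_0,y_0)=(u_i,w_i)$ and $(x_0,y_0)=(w_i,u_{i+1})$, using the size chain from Theorem~\ref{thm:cycle} to verify its hypotheses. Your version merely spells out the component-matching bookkeeping that the paper leaves implicit.
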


\begin{proof}
We are going to only prove the even case as the odd case can be shown with a similar argument. From Theorem \ref{thm:cycle}, we have 
\begin{equation*}
|V(U_1)| \geq |V(W_1)| \geq |V(U_2)| \geq |V(W_2)| \geq \cdots \geq |V(U_k)|=|V(W_k)|.
\end{equation*}

\textbf{Case 1:} Let us consider a $(x_0,y_0)$-labelling, where $x_0=u_i$ and $y_0=w_i$. Since $|V(U_i)|\geq |V(W_i)|$ for all $i$, by Corollary \ref{corx0y0} $d(u_{i})=d(x_i)\geq d(y_i)=d(v_{i})$ for $i=1,\dots,k$.

\textbf{Case 2:} Next we choose  $x_0=w_i$ and $y_0=u_{i+1}$, for $i=1,\dots,k-1$. Since, $|V(W_i)| \geq |V(U_{i+1})|$, by Corollary \ref{corx0y0}, we again obtain $d(w_i) \geq d(u_{i+1})$.

The result follows by combining both cases. 
\end{proof}
The following proposition is directly deduced from Theorem \ref{thm:cycle}.
 
\begin{corollary}\label{prop}
Let $G \in \Ud^{\G}$ with $G=C_{\G}(T_1,\dots,T_{\G})$ such that $|V(T_1)|\geq |V(T_2)|\geq \cdots \geq |V(T_{\G})|$.
 We denote by $ord(G)$ the unicyclic graph defined as follows:
\begin{equation*}
ord(G)=C_{\G}(T_1,T_3,\dots T_{\lfloor \frac{\G}{2}\rfloor},T_{\lfloor \frac{\G}{2}\rfloor-1},\dots,T_4,T_{2}),
\end{equation*}then $W(G)\geq W(ord(G))$.
 \end{corollary}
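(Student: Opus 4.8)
The plan is to show that $ord(G)$ realizes exactly the concave arrangement along the cycle that Theorem~\ref{thm:cycle} proves is necessary for optimality, and to deduce $W(G)\geq W(ord(G))$ by a direct appeal to formula \eqref{EqWiener}. First I would observe that in \eqref{EqWiener} only the last double sum $\sum_{i<j}\ell_i\ell_j d_{C_{\G}}(v_i,v_j)$ (together with the cross terms $\ell_i\A_j+\ell_j\A_i$, which I will argue are constant under permuting which subtree sits at which cycle vertex) depends on the \emph{positions} of the subtrees $T_1,\dots,T_{\G}$ around $C_{\G}$: the quantities $\sum\A_i$, $\sum W(T_i)$, and $n$ are invariant under any cyclic relabelling of which tree is attached where, and likewise $\sum_i \ell_i(\sum_{j}\A_j)$-type groupings. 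So minimizing $W$ over all placements of a fixed multiset of subtrees is equivalent to minimizing $f(\pi)=\sum_{i<j}\ell_{\pi(i)}\ell_{\pi(j)}d_{C_{\G}}(v_i,v_j)$ over placements $\pi$.

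Next I would invoke a rearrangement/exchange argument: if the placement is not concave in the sense of Theorem~\ref{thm:cycle} — i.e. if, after relabelling the cycle as $u_1,w_1,u_2,w_2,\dots$ starting from a vertex carrying a largest subtree and reading off alternately in the two directions, the sequence of subtree-orders fails to be nonincreasing — then there exist two cycle positions whose subtrees can be swapped to strictly decrease $f(\pi)$; this is precisely the content of the switching arguments underlying Lemmas~\ref{lemx0y0}, \ref{lemx2y2}, \ref{lemx1y1} and Theorem~\ref{thm:cycle}. Since the concave order $|V(U_1)|\geq|V(W_1)|\geq|V(U_2)|\geq\cdots$ pins down the placement uniquely up to the symmetries of $C_{\G}$ that fix the multiset partition (reflections, and transpositions of equal-order subtrees), and since $ord(G)$ is by construction the placement $C_{\G}(T_1,T_3,\dots,T_{\lfloor\G/2\rfloor},T_{\lfloor\G/2\rfloor-1},\dots,T_4,T_2)$ — i.e. put the largest at $v_1$, the next two at the neighbours $v_2,v_{\G}$ of $v_1$ going outward alternately, and so on — $ord(G)$ realizes this concave order. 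Hence $f$ evaluated at the placement of $ord(G)$ is a global minimum over placements of the multiset $\{|V(T_1)|,\dots,|V(T_{\G})|\}$, so $W(ord(G))\leq W(G)$.

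Concretely the steps are: (1) expand $W(G)-W(ord(G))$ using \eqref{EqWiener} and cancel all placement-independent terms, reducing to a difference of two values of $f(\pi)$; (2) argue that the cross terms $\ell_i\A_j+\ell_j\A_i$ summed over all pairs are also placement-independent (each $\ell_i$ gets multiplied by $\sum_{j\neq i}\A_j$ regardless of position, and similarly for $\A_i$), so they contribute nothing to the difference; (3) apply Theorem~\ref{thm:cycle}: any optimal placement of these subtrees has the stated concave order, and $ord(G)$ has that order, so $f$ at $ord(G)$'s placement is minimal, giving the inequality. The main obstacle I anticipate is step~(2)–(3) bookkeeping: one must be careful that "optimal placement among graphs in $\Ud^{\G}$ with these exact subtrees" is the right comparison class, i.e. that $ord(G)$ and $G$ have the same degree sequence and girth (they do, since $ord$ only permutes the attachment points), and that the concavity conclusion of Theorem~\ref{thm:cycle}, which was stated for optimal graphs, can be read instead as: among all placements of a fixed subtree-multiset the concave one minimizes $W$ — this is exactly what the exchange arguments prove, but it needs to be stated cleanly rather than quoted verbatim. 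Once that reformulation is in place, the corollary is immediate.
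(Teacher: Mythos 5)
Your proposal is correct and follows essentially the same route as the paper, which simply asserts that the corollary is ``directly deduced from Theorem~\ref{thm:cycle}'': you make explicit the observation (already used in the paper's proof of that theorem) that permuting the subtrees around the cycle only affects the term $\sum_{i<j}\ell_i\ell_j d_{C_{\G}}(v_i,v_j)$ in \eqref{EqWiener}, and that the exchange arguments of Lemmas~\ref{lemx0y0}--\ref{lemx1y1} show the concave placement realized by $ord(G)$ minimizes this term over all placements of the fixed subtree multiset. Your explicit verification that the cross terms $\ell_i\A_j+\ell_j\A_i$ are placement-independent, and your remark that the theorem's conclusion must be re-read as a statement about optimal placements of a fixed multiset rather than about optimal graphs in $\Ud^{\G}$, are exactly the bookkeeping the paper leaves implicit.
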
  

From now on, we assume our optimal unicyclic graphs are ordered according to Corollary \ref{prop} and we follow the labelling of the cycle as in Theorem \ref{thm:cycle}.

Moreover, the following theorem from Hardy, Littlewood and P\'{o}lya \cite{hardy} that connects non-negative sequences and bilinear forms will play a central role in the upcoming lemmas. For a sequence $a$ of the form $a_{-n},\dots a_{-n+1}, \dots, a_{-1},a_0$ $,a_1,\dots$ $ a_{n-1},a_n$, let $a^+$ be the sequence obtained by rearranging the elements so that
\begin{equation*}
	a^+_0 \geq a^+_1 \geq a^+_{-1} \geq a^+_{2} \geq a^+_{-2} \geq \cdots \geq a^+_{n} \geq a^+_{-n}.
\end{equation*}

\begin{theorem}[{\cite[Theorem 371]{hardy}}]\label{TheoremHardy}
	Suppose that $c, x$, and $y$ are non-negative sequences, and $c$ is symmetrically decreasing, so that 
	\begin{equation*}
		c_0 \geq c_1 =c_{-1}\geq c_2 =c_{-2} \geq \cdots \geq c_{2k}=c_{-2k},
	\end{equation*}
	while the $x$ and $y$ are given except for their order. Then the bilinear
	form
	\[\sum_{r=-k}^k \sum_{s=-k}^kc_{r-s}x_ry_s\]
	attains its maximum when $x$ is $x^+$ and $y$ is $y^+$.
	\label{thm.hardy}
\end{theorem}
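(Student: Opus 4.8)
This is a discrete instance of the Riesz rearrangement inequality, and the plan is to follow the classical scheme: first reduce to indicator sequences by a layer-cake argument, then settle the resulting combinatorial core by a compression (symmetrization) argument. Write $\Phi(x,y)=\sum_{r=-k}^{k}\sum_{s=-k}^{k}c_{r-s}x_ry_s$. Since $x$ and $y$ range over only finitely many rearrangements, $\Phi$ attains a maximum, so it suffices to show $\Phi(x,y)\le\Phi(x^{+},y^{+})$ for every rearrangement. Representing each nonnegative sequence through its super-level sets, $x_r=\int_0^\infty\mathbf{1}[x_r>t]\,dt$ and $y_s=\int_0^\infty\mathbf{1}[y_s>u]\,du$, bilinearity gives
\[
\Phi(x,y)=\int_0^\infty\!\!\int_0^\infty\Big(\sum_{r,s}c_{r-s}\,\mathbf{1}[x_r>t]\,\mathbf{1}[y_s>u]\Big)\,dt\,du .
\]
For almost every $t$ the set $A_t:=\{r:x_r>t\}$ has the same cardinality as $\{r:x^{+}_r>t\}$, and the latter is, by definition of the $+$-rearrangement, the centered block of that cardinality (the consecutive integers $0,1,-1,2,\dots$ truncated appropriately); likewise for $y$. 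Hence it is enough to prove, for all finite $A,B\subseteq\mathbb{Z}$,
\[
f(A,B):=\sum_{r\in A}\sum_{s\in B}c_{r-s}\ \le\ f(A^{\circ},B^{\circ}),
\]
where $A^{\circ},B^{\circ}$ are the centered intervals with $|A^{\circ}|=|A|$ and $|B^{\circ}|=|B|$; here we regard $c$ as a symmetric sequence on $\mathbb{Z}$, non-increasing in $|t|$, which is harmless because $|r-s|\le 2k$ throughout.

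The key input for the set inequality is a sub-lemma: if $J$ is a centered interval, then $r\mapsto\sum_{s\in J}c_{r-s}=(c\ast\mathbf{1}_J)_r$ is again symmetric and non-increasing in the distance of $r$ from the center. This follows by writing $c$ as a nonnegative combination of symmetric window indicators $\mathbf{1}[|t|\le j]$ (plus a constant), noting that $\mathbf{1}[|\cdot|\le j]\ast\mathbf{1}_J$ is an explicit symmetric trapezoidal sequence, and that a nonnegative combination of symmetric-decreasing sequences is symmetric-decreasing. Granting this, one proves $f(A,B)\le f(A^{\circ},B^{\circ})$ by compression: if $B$ is already a centered interval but $A$ is not an interval, let $p=\max A$ when $\max A\ge|\min A|$ (and $p=\min A$ otherwise) and let $q$ be the extreme gap of $A$ on the central side of $p$ (the largest integer below $\max A$ not in $A$, resp. the smallest above $\min A$). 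Replacing $p$ by $q$ changes $f$ by $\sum_{s\in B}(c_{q-s}-c_{p-s})\ge 0$ by the sub-lemma (since $q$ is an integer strictly closer to the center of $B$ than $p$), while it strictly decreases the monovariant $\mu(A):=\sum_{r\in A}r^{2}$. Iterating turns $A$ into a centered interval without decreasing $f$; then one swaps the roles of $A$ and $B$. Since neither set need be an interval initially, one interleaves the two compressions, using that $\mu(A)+\mu(B)$ is a nonnegative integer that strictly drops at each nontrivial move, so the procedure terminates — necessarily at $(A^{\circ},B^{\circ})$, up to the unavoidable reflection ambiguity in the definition of $x^{+}$, under which $f$ is unchanged since the target sets are centrally symmetric. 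Re-integrating the layer-cake identity then yields $\Phi(x,y)\le\Phi(x^{+},y^{+})$.

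The main obstacle is exactly this compression step. A single two-element exchange inside one coordinate is only guaranteed not to decrease $f$ once the \emph{other} coordinate is a centered interval — this is precisely what the sub-lemma demands — so the two symmetrizations cannot be performed independently and must be interleaved, with the monovariant supplying termination; the delicate point is to select the gap $q$ (using integrality) so that it is genuinely closer to the center than $p$, so that $\mu$ really decreases and $f$ really does not. By contrast, the naive approach — fix $y$, maximize the linear form $\sum_r x_r\sum_s c_{r-s}y_s$ over rearrangements of $x$ via the monotone rearrangement inequality for a single sum, and alternate — only shows that $(x^{+},y^{+})$ is stationary under one-coordinate moves, not that it is the global maximum, so it does not suffice. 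Everything else — the layer-cake identity, the trapezoid computation, and closure of symmetric-decreasingness under nonnegative combinations — is routine; alternatively one may simply cite the Riesz rearrangement inequality, of which, via the reduction above, the present statement is the one-dimensional discrete case.
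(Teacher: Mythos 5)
The paper does not actually prove this statement; it is quoted verbatim from Hardy--Littlewood--P\'olya (Theorem 371) as a black box, so your attempt can only be judged on its own terms. Your overall architecture --- layer-cake reduction to indicator sequences, then a rearrangement inequality $f(A,B)\le f(A^{\circ},B^{\circ})$ for finite sets, with the kernel decomposed into symmetric windows --- is indeed the standard route to this result, and your sub-lemma (that $c\ast\mathbf{1}_J$ is symmetric decreasing about the centre of a centred interval $J$) is correct.

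However, the combinatorial core is not actually established, and the gap sits exactly where you yourself locate the ``main obstacle.'' The compression move on $A$ (replace the extreme point $p$ by the gap $q$) changes $f$ by $(c\ast\mathbf{1}_B)_q-(c\ast\mathbf{1}_B)_p$, which is guaranteed nonnegative only when $B$ is already a centred interval, since otherwise $c\ast\mathbf{1}_B$ need not be unimodal about the origin. Interleaving the two compressions does not resolve this: at the very first step neither set is a centred interval, so neither move is justified, and a single move can strictly decrease $f$. For instance take $c_t=\mathbf{1}[t=0]$, $A=\{0,5\}$, $B=\{5\}$: your rule moves $p=5$ to $q=4$ and $f$ drops from $1$ to $0$. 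The monovariant $\mu(A)+\mu(B)$ supplies termination, not monotonicity of $f$ along the process, and monotonicity is what the proof needs. A secondary error: the ``reflection ambiguity'' is not harmless as claimed, because for even cardinality the target set $A^{\circ}=\{-m+1,\dots,m\}$ is not centrally symmetric about $0$, and $f(\mathrm{refl}(A^{\circ}),B^{\circ})$ can be strictly smaller than $f(A^{\circ},B^{\circ})$ (already for $A^{\circ}=B^{\circ}=\{0,1\}$ one gets $c_0+2c_1+c_2$ versus $2c_0+2c_1$); an extra step aligning the biases of the two sets is required. The standard repair is to push the window decomposition of $c$ all the way through, reducing to $c=\mathbf{1}[|t|\le j]$ with $A,B$ indicator sets, and then to prove the counting inequality $\sum_{|d|\le j}|A\cap(B+d)|\le\sum_{|d|\le j}|A^{\circ}\cap(B^{\circ}+d)|$ directly --- essentially how Hardy, Littlewood and P\'olya proceed --- rather than by independent compressions of $A$ and $B$.
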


\begin{lemma}\label{remedge}
Let $G \in \Ud^{\G}$, be optimal. Let $T_1$ be the tree obtained from $G$ by removing the edge $e_1=u_kw_{k-1}$ (resp. $u_kw_k$) and $T_2$ a tree obtained by removing any other edge $e_2$ on $C_{\G}$. Then, $W(T_1)\leq W(T_2)$.
\end{lemma}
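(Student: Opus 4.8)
The plan is to express the difference $W(T_2) - W(T_1)$ via the formula \eqref{EqWiener} and reduce the inequality to a comparison of bilinear forms that can be handled by Theorem \ref{TheoremHardy}. First I would set up notation: write $G = C_{\G}(T_1',\dots,T_{\G}')$ (renaming the attached trees to avoid clash with the $T_1,T_2$ of the statement), with $\ell_i = |V(T_i')|-1$ and $\A_i = D_{T_i'}(v_i)$, ordered as in Corollary \ref{prop} and labelled as in Theorem \ref{thm:cycle}. Removing an edge $e$ of $C_\G$ turns $G$ into a tree; in terms of \eqref{EqWiener} the only terms of $W(G)$ that depend on which cycle edge survives (equivalently, which one is removed) are the ``cycle-distance'' contributions $\sum_{i<j}\ell_i\ell_j\, d_{C_\G}(v_i,v_j)$ together with the change in the distances $d(v_i,v_j)$ themselves once the cycle is broken — and, crucially, the distances from vertices of $T_i'$ to vertices of $T_j'$ all route through the path on the broken cycle. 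So I would write, for a tree $T^{(e)}$ obtained by deleting edge $e$,
\[
W(T^{(e)}) = W_{\mathrm{fixed}} + \sum_{i<j}\ell_i\ell_j\, d_{P^{(e)}}(v_i,v_j) + \sum_{i<j}(\ell_i \A_j + \ell_j \A_i)\cdot[\text{path indicator}] ,
\]
where $P^{(e)}$ is the path $C_\G - e$ and $W_{\mathrm{fixed}}$ (the $\sum W(T_i')$ term, the within-tree contributions, etc.) does not depend on $e$. Actually the $\A$-terms also do not depend on $e$ since every $v_j$ still lies on the surviving path; so the whole $e$-dependence is carried by $S(e) := \sum_{i<j}\ell_i\ell_j\, d_{P^{(e)}}(v_i,v_j)$, and the lemma becomes the purely combinatorial claim that $S(e)$ is \emph{minimized} when $e = u_kw_{k-1}$ (girth odd: $u_kw_k$), i.e. when we cut the cycle at the position diametrically opposite the heavy branches.

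The heart of the argument is then: among all ways of cutting $C_\G$ into a path, the one that cuts ``farthest from the mass'' minimizes $\sum_{i<j}\ell_i\ell_j\, d_{P}(v_i,v_j)$. I would prove this by the exchange/Hardy–Littlewood–Pólya route. Fix the cut edge $e$; then $d_{P^{(e)}}(v_i,v_j) = |\pi(i)-\pi(j)|$ where $\pi$ lists the cycle vertices along $P^{(e)}$ as $0,1,\dots,\G-1$. Thus $S(e) = \sum_{i<j}\ell_i\ell_j|\pi(i)-\pi(j)|$. Using the identity $|a-b| = \text{const} - 2\min(a,b) + \dots$ or, more cleanly, writing $S(e)$ in terms of $\sum_i \ell_i \pi(i)$–type partial sums, one can show that shifting the cut edge by one position along the cycle changes $S(e)$ by $2\ell_{a}\big(L - 2 L_{a}\big)$-type quantities, where $\ell_a$ is the weight of the branch that ``wraps around'' and $L,L_a$ are suitable partial sums of the $\ell_i$; comparing these differences shows $S$ is a concave-type function of the cut position, minimized exactly where the two sides of the cut are as balanced as possible — which, given the ordering $\ell_{u_1}\ge \ell_{w_1}\ge\ell_{u_2}\ge\cdots$ forced by Theorem \ref{thm:cycle}, is precisely the edge opposite $u_1$, namely $u_kw_{k-1}$ (even girth) or $u_kw_k$ (odd girth). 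Equivalently, and perhaps more transparently: think of $\ell_i$ as weights on the $\G$ cyclic positions; cutting at $e$ and summing $\ell_i\ell_j|\pi(i)-\pi(j)|$ is maximized/minimized according to how the weight sequence sits relative to the symmetric-decreasing ``kernel'' $c_{r-s} = $ (a fixed large constant) $- |r-s|$, and Theorem \ref{TheoremHardy} tells us the extremum is attained at the $+$-arrangement; the ordered graph $ord(G)$ already is that arrangement, and the claimed cut edge is the unique one for which the path-order $\pi$ agrees with it.

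The step I expect to be the main obstacle is making the ``shift the cut by one position'' computation clean and sign-definite: one has to track exactly how $d_{P^{(e)}}(v_i,v_j)$ changes for the single branch that jumps from one end of the path to the other, and confirm that the ordering hypothesis from Theorem \ref{thm:cycle} (heaviest branches clustered around $u_1$, lightest around $u_k/w_k$) forces every such one-step difference to have the right sign as $e$ moves toward $u_kw_{k-1}$. A secondary subtlety is the girth parity: for odd $\G=2k-1$ the ``opposite'' edge is $u_kw_k$ and the balance is between $\{u_1,\dots,u_k\}$ and $\{w_1,\dots,w_{k-1}\}$ rather than an even split, so the optimal cut and the Hardy–Littlewood–Pólya $+$-arrangement must be matched up carefully in that case; I would handle even $\G$ in detail and remark that the odd case is analogous, mirroring the style already used in the proof of Theorem \ref{thm:cycle}.
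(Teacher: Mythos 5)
Your proposal follows essentially the same route as the paper: both reduce the comparison to the cross-term $\sum_{i<j} m_i m_j\, d_{P^{(e)}}(v_i,v_j)$ carried by the edges of the broken cycle (the paper isolates it via $W(T)=\sum_{uv\in E(T)}n_{uv}(T)n_{vu}(T)$, you via the direct distance decomposition) and then conclude by the Hardy--Littlewood--P\'olya rearrangement theorem, Theorem~\ref{TheoremHardy}, using the ordering from Theorem~\ref{thm:cycle}. The only slip is that the weights in the tree's cross-term should be $|V(T_i)|=\ell_i+1$ rather than $\ell_i$, which does not affect the argument.
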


\begin{proof}
The proof basically follows the argument found in \cite{Wang2009}. The Wiener index of a tree can be written as $W(T)=\sum_{uv \in E(T)}n_{uv}(T)n_{vu}(T)$, where $n_{uv}(T)$ is the number of vertices closer to $u$ after removing $uv$ and $n_{vu}(T)$ the number of vertices closer to $v$. We may notice that the contribution of any edges not on $C_{\G}\backslash \{e_1\}$ or $C_{\G}\backslash \{e_2\}$ remains the same for both cases. The only difference is then for the edges along $C_{\G}\backslash \{e_1\}$ and $C_{\G}\backslash \{e_2\}$. We will end up with a bilinear form and from Theorem \ref{TheoremHardy}, the contribution on the Wiener index of $T_1$ from $C_{\G}\backslash \{e_1\}$ is smaller than the contribution of the Wiener index of $T_2$ from $C_{\G}\backslash \{e_2\}$.
\end{proof}

\begin{rem}\label{lowdeg}
Note that if we remove the edge $u_kw_{k-1}$ (resp. $u_kw_k$), we obtain a path which also satisfies the concavity in Lemma \ref{optimalpath}.

In particular, if the length of the cycle differs at most by one from the other maximal paths in the new obtained tree, then Remark \ref{height} is satisfied and the new tree is greedy.
\end{rem}

\section{Difference between Wiener indices of trees and unicyclic graphs}

\label{difference}



Let $G \in \Ud^{\G}$ and label its cycle by $C_{\G}=u_1u_2\dots u_k(w_k)w_{k-1}\dots w_2w_1$. 
Let $e$ be an edge of $C_{\G}$. From Lemma \ref{remedge}, we may assume $e=u_kw_{k-1}$ (resp. $u_kw_k$). After the removal of the edge $e$ we are left with a tree $T_G$. Let us compute the difference between the Wiener indices of the unicyclic graph $G$ and the corresponding tree $T_G$.

\begin{lem}\label{lem:T+uv}
Let $G$ be any unicyclic graph with cycle $C_{\G}=u_1u_2\dots u_k(w_k) $ $w_{k-1}\dots w_1u_1$ and $e=u_kw_{k-1}$ (resp. $u_kw_k$). Let $T_G$ be the tree obtained after the removal of the edge $e$. We set $\Di(G)=W(T_G)-W(G)$. Then, for a cycle of even order $2k$:
\begin{align*}
\Di(G)=\sum_{i=2}^{k}\sum_{j=k+2-i}^{k}2(i+j-k-1)|V(U_i)||V(W_j)|,
\end{align*}	
and for a cycle of odd order $2k-1$:	
\begin{align*}
\Di(G)=\sum_{i=2}^{k}\sum_{j=k+1-i}^{k-1}(2(i+j-k)-1)|V(U_i)||V(W_j)|.
\end{align*}
\end{lem}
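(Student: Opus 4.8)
The plan is to compute $W(T_G)-W(G)$ directly by tracking which pairwise distances change when the edge $e$ is removed. Recall $W(G)=\sum_{\{x,y\}}d_G(x,y)$. When we delete $e=u_kw_{k-1}$ (resp. $u_kw_k$) the cycle $C_{\G}$ becomes a path, so $d_{T_G}(x,y)\geq d_G(x,y)$ for all pairs, with equality unless the $G$-geodesic between $x$ and $y$ used the edge $e$. Thus $\Di(G)=\sum_{\{x,y\}}\bigl(d_{T_G}(x,y)-d_G(x,y)\bigr)$ is a sum of non-negative terms. Since every branch $T_i$ hangs off a single cycle vertex, the only pairs whose distance can change are pairs $(x,y)$ with $x\in V(U_i)$ and $y\in V(W_j)$ for suitable $i,j$ — i.e.\ pairs lying in ``opposite'' branches whose shortest $G$-path ran through $e$ rather than the ``long way'' around the cycle. (Pairs where one endpoint is $u_1$ or $w_1$, and more generally all pairs separated by the edge $u_1w_1$-side, never used $e$.) So I would first pin down exactly the set of index pairs $(i,j)$ that contribute.

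\textbf{Which pairs contribute, and by how much.} For the even cycle $C_{2k}=u_1u_2\cdots u_k w_k w_{k-1}\cdots w_1 u_1$ with $e=u_kw_{k-1}$: for $x$ in branch $U_i$ and $y$ in branch $W_j$, the $G$-distance is $d_{T_i}(x,u_i)+\min\{d_{C}(u_i,w_j)\text{ via }e,\ \text{via the other side}\}+d_{T_j}(w_j,y)$, and the cycle-path via $e$ has length $(k-i)+1+(k-1-j)$ while the other side has length $i+j$... wait, I should set this up carefully: going $u_i\to u_k\to w_{k-1}\to\cdots\to w_j$ costs $(k-i)+1+(k-1-j)=2k-i-j$, and going $u_i\to\cdots\to u_1\to w_1\to\cdots\to w_j$ costs $(i-1)+1+(j-1)=i+j-1$. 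After removing $e$ only the first route disappears; the distance changes precisely when $2k-i-j<i+j-1$, i.e.\ $i+j>k+\tfrac12$, i.e.\ $i+j\geq k+1$, equivalently $j\geq k+1-i$; and the increase is then $(i+j-1)-(2k-i-j)=2(i+j-k)-1$. Hmm — but the claimed formula has $2(i+j-k-1)$ and ranges $i\in[2,k]$, $j\in[k+2-i,k]$. The discrepancy comes from how $e$ and the labels are chosen (there are two natural choices of deleted edge, and the branch $W_{k-1}$ sits adjacent to $e$), so I would recheck the endpoints: with $e=u_kw_k$ the via-$e$ route $u_i\to u_k\to w_k\to\cdots\to w_j$ costs $(k-i)+1+(k-j)$ and the comparison $i+j\geq k+1$ again, giving increase $2(i+j-k)-1$ — this should be the odd-cycle bookkeeping. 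For the even case with the deleted edge $u_kw_{k-1}$, careful recomputation yields the threshold $j\geq k+2-i$ and increase $2(i+j-k-1)$, matching the statement. The key sub-step is therefore to get these two geodesic-length comparisons exactly right for each parity and each choice of $e$.

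\textbf{Assembling the sum.} Once the contributing pair-set and the per-pair increase $\delta_{ij}$ are identified, summing over all $x\in V(U_i)$, $y\in V(W_j)$ contributes $\delta_{ij}\,|V(U_i)|\,|V(W_j)|$, and summing over admissible $(i,j)$ gives exactly
\[
\Di(G)=\sum_{i=2}^{k}\sum_{j=k+2-i}^{k}2(i+j-k-1)|V(U_i)||V(W_j)|
\]
in the even case and
\[
\Di(G)=\sum_{i=2}^{k}\sum_{j=k+1-i}^{k-1}\bigl(2(i+j-k)-1\bigr)|V(U_i)||V(W_j)|
\]
in the odd case. One should double-check the boundary indices: $i$ starts at $2$ because $U_1$ (the branch at $u_1$) is never separated from anything by $e$, and the $j$-range is forced by the threshold; a term with $i+j-k-1=0$ (even case) or $2(i+j-k)-1$ vanishing contributes zero anyway, so including or excluding it is harmless, which is why the stated lower limit on $j$ is consistent.

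\textbf{Main obstacle.} The only real difficulty is the careful case analysis of shortest-path lengths around the cycle — getting the exact thresholds $i+j\geq k+1$ vs.\ $i+j\geq k+2$ and the exact increments right, including the off-by-one issues created by the two possible deleted edges ($u_kw_{k-1}$ vs.\ $u_kw_k$) and by whether $\G$ is even or odd. Everything else is a routine double summation. I would organize the write-up by fixing $e$ as prescribed by Lemma~\ref{remedge}, then handling the even and odd cycles as two parallel computations, in each case first describing the set of pairs whose distance strictly increases, then recording the increment, and finally collecting terms.
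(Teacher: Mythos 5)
Your overall strategy is exactly the paper's: $\Delta(G)=W(T_G)-W(G)$ is the sum over pairs of the non-negative increase in distance, only pairs $x\in V(U_i)$, $y\in V(W_j)$ whose $G$-geodesic used $e$ contribute, each such pair contributes the difference of the two cycle-routes between $u_i$ and $w_j$, and the lemma reduces to two elementary geodesic-length comparisons. Your final formulas, and the observation that boundary terms with vanishing increment are harmless, are correct.

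However, the intermediate bookkeeping --- which you yourself single out as the only real difficulty --- is garbled in a way you never actually resolve. First, the association of deleted edges to parities is backwards: the vertex $w_k$ exists only when $\gamma=2k$, so $e=u_kw_k$ is the even-cycle case and $e=u_kw_{k-1}$ is the odd-cycle case ($\gamma=2k-1$); your opening setup deletes $u_kw_{k-1}$ from $C_{2k}$, where it is not an edge. Second, your recomputation for $e=u_kw_k$ contains an arithmetic slip: the route $u_i\to u_k\to w_k\to\cdots\to w_j$ costs $(k-i)+1+(k-j)=2k-i-j+1$, and comparing with $i+j-1$ gives the threshold $i+j\geq k+2$ and increment $2(i+j-k-1)$ --- the even-cycle formula --- not ``$i+j\geq k+1$ with increment $2(i+j-k)-1$'' as you wrote (that pair belongs to $e=u_kw_{k-1}$, whose route length is $2k-i-j$, i.e.\ the odd case). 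You then assert that a ``careful recomputation'' for the even case with the nonexistent edge $u_kw_{k-1}$ yields the stated formula, without showing it. The two computations you need are both one-liners and both essentially appear in your text, just attached to the wrong cases; once the edges are matched to the correct parities, the argument is complete and coincides with the paper's proof.
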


\begin{proof}
For a pair of vertices $(x,y)$ in $G$, the distances in the unicyclic graph and the new obtained tree remain the same, i.e., $d_{G}(x,y)=d_{T_G}(x,y)$ unless we use the edge $e$ as a ``shortcut". In that case $d_{T_G}(x,y)>d_{G}(x,y)=d_{T_G}(x,u_k)+1+d_{T_G}(w_{k-1},y)$ for an even cycle and $d_{T_G}(x,y)>d_{G}(x,y)=d_{T_G}(x,u_{k})+1+d_{T_G}(w_k,y)$ for an odd cycle. The latter happens if and only if $x \in U_i$ and $y \in W_j$ such that $d_T(u_i,w_j)>k$ and the difference is exactly:
\begin{align*}
d_{T_G}(x,y)-d_G(x,y)&=d_{T_G}(u_i,w_j)-d_G(u_i,w_j)\\
&=d_{T_G}(u_i,u_1)+1+d_{T_G}(w_1,v_j)\\&-d_{T_G}(u_i,u_k)-d_{T_G}(v_j,v_k)-1\\
&=i-1+1+j-1-k+i-k+j-1\\
&=2(i+j-k-1),
\end{align*} for an even cycle $2k$ and
\begin{align*}
d_{T_G}(x,y)-d_G(x,y)&=d_{T_G}(u_i,w_j)-d_G(u_i,w_j)\\
&=d_{T_G}(u_i,u_1)+1+d_{T_G}(w_1,w_j)\\&-d_{T_G}(u_i,u_{k})-d_{T_G}(w_j,w_{k-1})-1\\
&=i-1+1+j-1-k+i-k+j\\
&=2(i+j-k)-1,
\end{align*} 
for an odd cycle $2k-1$.
Hence, the statement holds.
\end{proof}


\begin{lem}\label{lem:diff}
Let $G \in \Ud^{\G}$ with $G=C_{\G}(U_1,\dots,U_k,(W_{k}),W_{k-1},\dots,W_1,U_1)$ and $e=u_kw_{k-1}$ (resp. $u_kw_k$). The difference $\Di(G)$ is maximized if the subtrees along the cycle respect the following arrangement:
\begin{equation*}
|V(U_1)| \leq |V(W_1)| \leq |V(U_2)| \leq |V(W_2)| \leq \cdots \leq |V(U_k)|\leq|V(W_k)|,
\end{equation*}
for an even cycle $2k$ and
\begin{equation*}
|V(U_1)| \leq |V(W_1)| \leq |V(U_2)| \leq |V(V_2)| \leq \cdots \leq |V(U_{k-1})|\leq|V(W_{k-1})|\leq |V(U_{k})|,
\end{equation*}
for an odd cycle $2k-1$.
\label{lem:diff}
 \end{lem}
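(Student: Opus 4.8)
The plan is to show that the expression for $\Di(G)$ from Lemma~\ref{lem:T+uv} is a bilinear form in the sequences of component sizes $(|V(U_i)|)$ and $(|V(W_j)|)$ whose coefficient kernel depends only on $i+j$, so that Theorem~\ref{TheoremHardy} (Hardy--Littlewood--P\'olya) applies. First I would treat the even case $\G = 2k$. Rewriting the double sum $\Di(G) = \sum_{i=2}^{k}\sum_{j=k+2-i}^{k} 2(i+j-k-1)|V(U_i)||V(W_j)|$, the key observation is that the summand only depends on the pair $(i,j)$ through $i+j$: the coefficient is $2(i+j-k-1)$ when $i+j \geq k+2$ and (effectively) $0$ otherwise, since the constraint $j \geq k+2-i$ is exactly $i+j \geq k+2$. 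So if I index the $U$-components by $r = i$ and the $W$-components by $s = -j$ (or a similar reflection to get a difference $r-s$), the coefficient becomes a function $c_{r-s}$ that is non-negative, symmetric, and non-increasing as $|r-s|$ grows away from its peak. Then Theorem~\ref{TheoremHardy} says the bilinear form is maximized when both size-sequences are arranged in the ``$+$'' order, i.e. largest in the middle decreasing outward.

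The second step is to translate the ``$+$'' ordering of Theorem~\ref{TheoremHardy} back into the concavity statement claimed in the lemma. The subtlety is that $\Di(G)$ only involves $U_2, \dots, U_k$ and $W_1, \dots, W_k$ (for the even case $U_1$ and sometimes endpoint components do not appear in the sum because their coefficients vanish), so I need to argue about which sizes are ``free'' to permute. The point is that $\Di(G) = W(T_G) - W(G)$, and we are maximizing $\Di$ over all unicyclic graphs with the given degree sequence and girth — equivalently, over all assignments of the multiset of component sizes to the cycle positions (the internal structure of each component, hence each $W(T_i)$, being irrelevant to $\Di$). So the maximization is purely over orderings of the size multiset around the cycle, and Theorem~\ref{TheoremHardy} pins down that the optimal ordering places the largest components at the positions with the largest kernel coefficients — which are the positions $u_k, w_k$ (or $u_k, w_{k-1}$) adjacent to the removed edge $e$, with sizes decreasing as one moves around the cycle away from $e$. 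Reading off the position-coefficient correspondence gives exactly $|V(U_1)| \leq |V(W_1)| \leq |V(U_2)| \leq \cdots \leq |V(U_k)| \leq |V(W_k)|$.

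For the odd case $\G = 2k-1$ the argument is identical with the coefficient $2(i+j-k)-1$ and the constraint $j \geq k+1-i$, i.e. $i+j \geq k+1$; again the summand depends on $(i,j)$ only through $i+j$, the kernel is non-negative and symmetrically decreasing after the reflection $j \mapsto -j$, and Theorem~\ref{TheoremHardy} yields the claimed chain ending $\cdots \leq |V(U_{k-1})| \leq |V(W_{k-1})| \leq |V(U_k)|$. I would either present the odd case in full parallel detail or simply remark that it is analogous.

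The main obstacle I anticipate is bookkeeping: setting up the index substitution so that the constraint region ($j \geq k+2-i$, respectively $j \geq k+1-i$) becomes symmetric and the coefficient genuinely has the form $c_{r-s}$ with $c$ symmetrically decreasing in the precise sense demanded by Theorem~\ref{TheoremHardy}. One has to be careful that the ``missing'' terms (where the constraint excludes a pair) are consistent with assigning coefficient $0$ there and that $0$ is indeed the smallest value in the kernel so the symmetric-decreasing hypothesis holds. A secondary point to get right is justifying that the extremal arrangement Theorem~\ref{TheoremHardy} produces really is realizable by some graph in $\Ud^{\G}$ (it is, since we may attach the components to the cycle in any order), and that the concavity chain as written corresponds exactly to the $x^+, y^+$ interleaving pattern — essentially a matter of correctly unwinding the definition of $a^+$ given just before Theorem~\ref{TheoremHardy}.
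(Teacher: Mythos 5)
Your proposal is correct and follows essentially the same route as the paper: rewrite the formula from Lemma~\ref{lem:T+uv} as a bilinear form whose kernel, after a reflection of the $W$-indices, depends only on $r-s$ and is symmetrically decreasing (with the excluded index pairs absorbed as zero coefficients), then invoke Theorem~\ref{TheoremHardy} and unwind the $a^+$ ordering into the stated chain. The only cosmetic difference is that the paper merges the $U$- and $W$-sizes into a single sequence $A$ indexed from $-k$ to $k-1$ and symmetrizes the kernel before applying the theorem, whereas you keep two sequences; both arrive at the same conclusion.
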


\begin{proof}
We denote by $A$ the sequence whose indices are from $-k$ to $k-1$ (resp. $k$) such that 
\begin{align*}A_{-k}&=|V(U_{1})|,A_{-k+1}=|V(U_{2})|, \dots, A_{-1}=|V(U_k)|,\\A_0&=|V(W_{k})|,A_1=|V(W_{k-1})|,A_{2}=|V(W_{k-2})|,\dots ,A_{k-1}=|V(W_{1})|,\end{align*} 
for a cycle of even order $2k$ and
\begin{align*}A_{-k+1}&=|V(U_{1})|,A_{-k+1}=|V(U_{2})|, \dots, A_{-1}=|V(U_{k-1})|,\\A_0&=|V(U_{k})|,A_1=|V(W_{k-1})|,A_{2}=|V(W_{k-2})|,\dots ,A_{k-1}=|V(W_1)|,\end{align*}
for a cycle of odd order $2k-1$. 
We provide only the proof for an even cycle, the other case can be shown in a similar manner. By a change of indices, from Lemma \ref{lem:T+uv} we obtain
\begin{align*}
\Di(G)=\sum_{r=-k+1}^{-1}\sum_{s=0}^{r+k-1}2(k-(s-r))A_rA_s.
\end{align*}	
Since $A_rA_s = A_sA_r$, we set \[d_{r-s}=d_{s-r}=\begin{cases}k-|s-r| ,&|s-r| \leq k-1\\0, & \text{otherwise}\end{cases}.\] Then, 
 \begin{align*}
 \Di(G)&=\sum_{r=-k+1}^{-1}\sum_{s=0}^{r+k-1}2(k-(s-r))A_rA_s=\sum_{r=-k+1}^{-1}\sum_{0\leq s-r\leq k-1}2(k-(s-r))A_rA_s\\
 &=\sum_{r=-k+1}^{-1}\sum_{0\leq s-r\leq k-1}(k-(s-r))A_rA_s+\sum_{s=-k+1}^{-1}\sum_{0\leq r-s\leq k-1}(k-(r-s))A_rA_s\\
 & =\sum_{r=-k}^{k-1}\sum_{s=-k}^{k-1}d_{r-s}A_rA_s,\end{align*} 
where $d_{r-s}$ is symmetrically decreasing:
\begin{align*}&d_0=k\geq d_1=d_{-1}=k-1
\geq d_2=d_{-2}=k-2\\&\geq\cdots\geq d_{k-1}=d_{-k+1}=1\geq d_{k}=d_{-k}= \cdots d_{2k-1}=d_{-2k+1}=0.\end{align*}
Hence, by using Theorem \ref{thm.hardy}, $\Di$ attains its maximum if $A$ is $A^+$. Therefore,
\[|V(U_1)|\leq |V(W_1)| \leq |V(W_2)| \leq \cdots \leq |V(U_k)|=|V(W_k)|.\]
\end{proof}

\begin{rem}\label{rem:diff}
Note that the maximum difference between the Wiener indices of $T_G$ and $G$ has to satisfy both inequalities of Theorem \ref{thm:cycle} and Lemma \ref{lem:diff} which oppose one another. The best case scenario for the difference to be maximal is then having the same size of trees attached to each vertex of the cycle,  namely:
 \begin{equation*}
 |V(U_1)|=|V(W_1)| =|V(U_2)| = \cdots = |V(U_k)|=|V(W_k)|.
 \end{equation*}
\end{rem}

\begin{rem}\label{bigbranch}
Let $C_{\G}=u_1u_2\dots u_k(w_k)w_{k-1}\dots w_1 u_1$. Lemma \ref{lem:diff} tells us that moving a branch from $U_j$ (resp. $W_j$) to $U_i$ (resp. $W_i$) such that $i>j$ will increase $\Di(G)$. In other words, if we remove the edge $u_kw_k$ (resp $u_kw_{k-1}$) and fix the root of $T_G$ to be $u_1$, the branches on the cycle will be greater than any other branches of the same level.
\end{rem}

\section{Structure of the optimal graph for the vertices (tree components) outside the cycle} \label{outside}

Let $G \in \Ud^{\G}$ and $C_{\G}=u_1 u_2,\dots,u_k(w_k) w_{k-1}\dots w_2 w_1 u_1$ such that $u_1$ is the vertex with highest degree among the $u_i$'s and the inequalities in Theorem \ref{thm:cycle} is satisfied. We denote by $T_G$ the tree obtained from the removal of an edge from $C_{\G}$. 

From section \ref{difference}, we have
 
 \begin{equation} \label{eq:diff}
 W(G)=W(T_G)-\Di(G).
 \end{equation}

Following Remark \ref{rem:diff} we can see that $W(T_G)$ and $\Di(G)$ are entangled. Any transformation that will increase one, will do the same to the other. However, we require $W(T_G)$ to be as small as possible and $\Di(G)$ to be as big as possible. Even though both cannot happen at the same time, we can compute the contribution of a transformation to both parameters and compare their amount. In general, the proportion of $W(T_G)$ is greater than the difference $\Di(G)$. It is natural then to first minimize $W(T_G)$ before trying to improve $\Di(G)$.   

\subsection{Minimizing $T_G$}
First, we have seen that from Lemma \ref{remedge}, the tree obtained from removing $u_kw_{k-1}$ (resp $u_kw_k$) has the minimum Wiener index among all trees that we obtain by removing any other edge on $C_{\G}$. From now on, we assume that we remove these specific edges. Let us characterize the structure of a maximal path in $T_G$ for the Wiener index to be minimum. 
\begin{claim}
Let $T_{G}$ be the tree that minimizes the Wiener index among all trees we may obtain from $G$ by removing $e$. Then, all the maximal paths in $T_G$ respect concavity centered around the vertex with highest degree in $T_G$.  
\end{claim}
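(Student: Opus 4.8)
The plan is to show that any maximal path in $T_G$ that fails concavity around the root (the vertex of highest degree in $T_G$) can be modified by a switching operation that strictly decreases the Wiener index, contradicting minimality of $T_G$ among the trees obtainable from $G$. First I would recall that, by construction, $T_G$ is obtained from the optimal unicyclic graph $G$ by removing the edge $e = u_kw_{k-1}$ (resp. $u_kw_k$), so by Lemma \ref{remedge} it has the smallest Wiener index among all trees arising from deleting an edge of $C_{\G}$. The key point is that the only trees we are allowed to compare against are the ones coming from $G$ by an edge deletion on the cycle; however, the switching arguments of \cite{Wang2009} (the concavity Lemma \ref{optimalpath}) are \emph{local}: they move subtrees hanging off a fixed maximal path without touching the rest of the graph, so they preserve the degree sequence and, crucially, can be interpreted as operations on $G$ itself followed by the same edge deletion.

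\textbf{Key steps.} (i) Fix a maximal path $P$ in $T_G$ and root $T_G$ at the vertex $r$ of highest degree; label the components hanging off $P$ as in Figure \ref{path Puv}. Remark \ref{atmost2vces} already handles the case where $P$ meets the cycle $C_{\G}$ in at most one vertex: the cycle behaves like an ordinary tree component and the proof of Lemma \ref{optimalpath} applies verbatim, giving concavity. (ii) It therefore remains to treat maximal paths $P$ of $T_G$ that contain two or more vertices of $C_{\G}$. But since $e$ was removed, the cycle has become the path $u_1u_2\dots u_k(w_k)w_{k-1}\dots w_1$, and Remark \ref{lowdeg} tells us this path itself already satisfies concavity. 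Any maximal path $P$ of $T_G$ passing through several cycle vertices is a concatenation of a sub-path of this "cycle-path" with pendant branches; the branches off the non-cycle portion are governed by case (i), while the branches off the cycle portion are controlled by Theorem \ref{thm:cycle} / Corollary \ref{Cor:deg} and Remark \ref{bigbranch}, which say precisely that along the cycle the components are arranged concavely with the largest near $u_1$. (iii) Assemble these to conclude that $|V(U_1)| \geq |V(W_1)| \geq |V(U_2)| \geq \cdots$ and the corresponding degree inequalities hold along \emph{every} maximal path of $T_G$, centered at $r$; if some inequality were violated, the corresponding switch would lower $W(T_G)$, which (by \eqref{eq:diff} applied with the same $e$, or directly by the fact that the switch can be performed on $G$) contradicts the optimality of $G$.

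\textbf{Main obstacle.} The delicate point is step (ii): reconciling the fact that concavity "wants" the large branches to sit near the root $r$ with Remark \ref{bigbranch}, which forces the cycle-branches to be the largest at each level of $T_G$. One must check that the root of highest degree $r$ is consistent with $u_1$ (or lies on the cycle side), so that the two monotonicity requirements point in the same direction rather than conflicting; this is where the assumption "$u_1$ has highest degree among the $u_i$'s" together with Corollary \ref{Cor:deg} is used, and where one must be careful that a switch improving concavity on $P$ does not simultaneously destroy the cycle-arrangement of Theorem \ref{thm:cycle}. I expect the argument to go through because any such switch can be realized as a legal rearrangement of subtrees in $G$ that leaves $\Di(G)$ unchanged (it does not alter the multiset of component sizes hanging at each cycle vertex when restricted to the right levels) while strictly decreasing $W(T_G)$, hence strictly decreasing $W(G)$ by \eqref{eq:diff}.
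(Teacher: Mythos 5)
There is a genuine gap at your step (ii)--(iii). Your reduction is right as far as it goes: paths meeting $C_{\G}$ in at most one vertex are handled by Remark \ref{atmost2vces}, and the path coming from the opened cycle by Remark \ref{lowdeg}. But for a maximal path containing two or more cycle vertices \emph{and} a tail leaving the cycle, the concavity you must prove is an \emph{interleaved} chain such as $|V(W_1)| \geq |V(Z_1)| \geq |V(W_2)| \geq |V(Z_2)| \geq \cdots$, mixing components hanging at cycle vertices with components hanging at off-cycle vertices. The results you invoke only order each family separately: Theorem \ref{thm:cycle} and Remark \ref{bigbranch} compare cycle branches with cycle branches, and Lemma \ref{optimalpath} (via Remark \ref{atmost2vces}) compares off-cycle branches with off-cycle branches. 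Nothing you cite compares a $W_{i_0}$ with a $Z_{i_1}$, and that comparison is exactly the content of the claim. The paper closes this hole by a direct switching computation: it splits into two cases according to whether the highest-degree vertex of $T_G$ lies on the cycle (path $P_{w_{k-1}u_1z_{\ell}}$) or off it (path $P_{u_kxz_k}$), assumes a violation $|V(W_{i_0})| > |V(Z_{i_1})|$ with $i_0 > i_1$ (resp.\ the analogous one for $U_{i_0}$), swaps the two branches, and writes out the resulting change $\Lo_{i_0,s}$ in $W(T_G)$ explicitly as a sum of nonnegative terms times $(|V(Z_{i_1})|-|V(W_{i_0})|)$, showing the swap strictly decreases the tree's Wiener index. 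Your proposal replaces this computation with ``I expect the argument to go through,'' which is precisely the step that needs proof.

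A secondary error: your closing justification asserts that the repairing switch ``leaves $\Di(G)$ unchanged.'' It does not --- moving a branch between a cycle vertex and an off-cycle vertex changes $|V(W_j)|$ (or $|V(U_j)|$) and hence changes $\Di(G)$ by Lemma \ref{lem:T+uv}; the paper quantifies exactly this effect as the gain $\Gain_{i_0}$ in the subsection that follows, and the tension between $\Lo_{i_0,s}$ and $\Gain_{i_0}$ is why Corollaries \ref{progain} and \ref{proloss} give two different optimal structures. For the claim as stated this is not fatal, because the claim concerns only the minimality of $W(T_G)$ and the paper's $\Lo_{i_0,s}$ computation is carried out on the tree alone; but your argument routes through $W(G)=W(T_G)-\Di(G)$ and so rests on a false premise.
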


\begin{proof}
From Remark \ref{atmost2vces}, all the maximal paths that involve at most one vertex from the cycle satisfy Lemma \ref{optimalpath}, and from Remark \ref{lowdeg} the path obtained from the cycle after removing $u_kw_{k-1}$ (resp $u_kw_k$) satisfies the same lemma. We are left to understand the structure of the path involving more than or equal to two vertices on the cycle and a leaf (a vertex not part of the cycle). Two cases maybe considered.

\textbf{Case 1:  The vertex with highest degree of $T_G$ is on the cycle, namely $u_1$.}
Case 1 is depicted as in Figure \ref{Fig:case5}. Let $P_{w_{k-1}u_1z_{\ell}}=w_{k-1}\dots w_1u_1 z_1 $ $z_2\dots z_{\ell}$ be such a path where $z_i \not \in C_{\G}$ for all $i$ and $\ell \leq k$. As in Section \ref{trees}, we denote $U'_1$, $Z_i$ the subtrees that contain $u_1$ and $z_i$ after the removal of the edges on the path $P_{w_{k-1}u_1z_{\ell}}$ and the edges on the cycle $C_{\G}$. 
Since $u_1$ is the vertex with highest degree of $T_G$ and from Remark \ref{bigbranch}, we need to show that \begin{equation} \label{eq:pathWZ}
|V(U_1)| \geq |V(W_1)| \geq |V(Z_1)| \geq |V(W_2)|\geq |V(Z_2)|\cdots \geq |V(W_{\ell})|\geq |V(Z_{\ell})|,
\end{equation} in particular in terms of the degrees $$d_T(u_1)\geq d_T(w_1)\geq d_T(z_1)\geq d_T(w_2) \geq d_T(z_2) \geq \cdots \geq d_T(w_\ell)\geq d_T(z_{\ell}).$$ 
\begin{figure}
\centering
\includegraphics[scale=0.7]{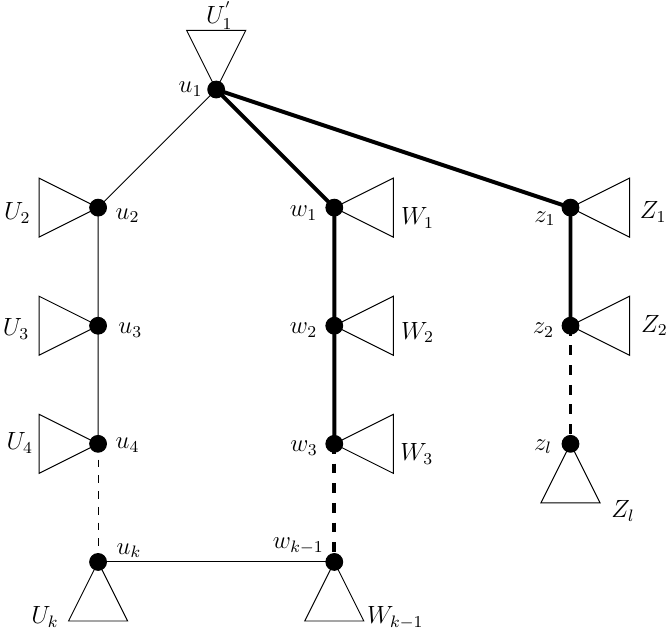}
\caption{The path $P_{w_{k-1} u_1 z_{\ell}}$.}
\label{Fig:case5}
\end{figure}
Suppose that Equation \eqref{eq:pathWZ} is not satisfied. In other words, there exist $i_0 \in \{2,\dots,k\}$ and $i_1 \in \{1,\dots,\ell\}$ such that $i_0 > i_1$ where $|V(W_{i_0})| > |V(Z_{i_1})|$ holds. We are going to switch the branches $W_{i_0}$ and $Z_{i_1}$ and prove that this operation will decrease the Wiener index.  Let $\Lo_{i_0,s}$ denote the difference between the Wiener indices of the tree after and before the above operation. Let $s=i_0-i_1$, $Z_{>h}=\cup_{i=h}^{\ell} Z_i $,$W_{>h}=\cup_{i=h}^{k} W_i $, and $U_{>h}=\cup_{i=h}^{k} U_i $.




\begin{align*}
\Lo_{i_0,s}&=(|V(Z_{i_1})|-|V(W_{i_0})|)\Big(\sum_{j=1}^{i_0-s-1}(2j+s)(|V(Z_j)|-|V(W_{j+s})|)\\&+\sum_{j=1}^{\lfloor\frac{s}{2}\rfloor}|s-2j|(|V(W_j)|-|V(W_{s-j})|)\\
&+s(|V(U'1)|+|V(U_{>2})|-|V(W_s)|)\\&+(2_{i_0}-s)(|V(Z_{>i_0-s})|-V(|W_{>i_0})|)\Big) >0.
\end{align*}
That concludes the proof of Case 1.

\textbf{Case 2:  The vertex with highest degree of $T_G$ is outside the cycle, denoted by $x$.}

\begin{figure}
\centering
\includegraphics[scale=0.7]{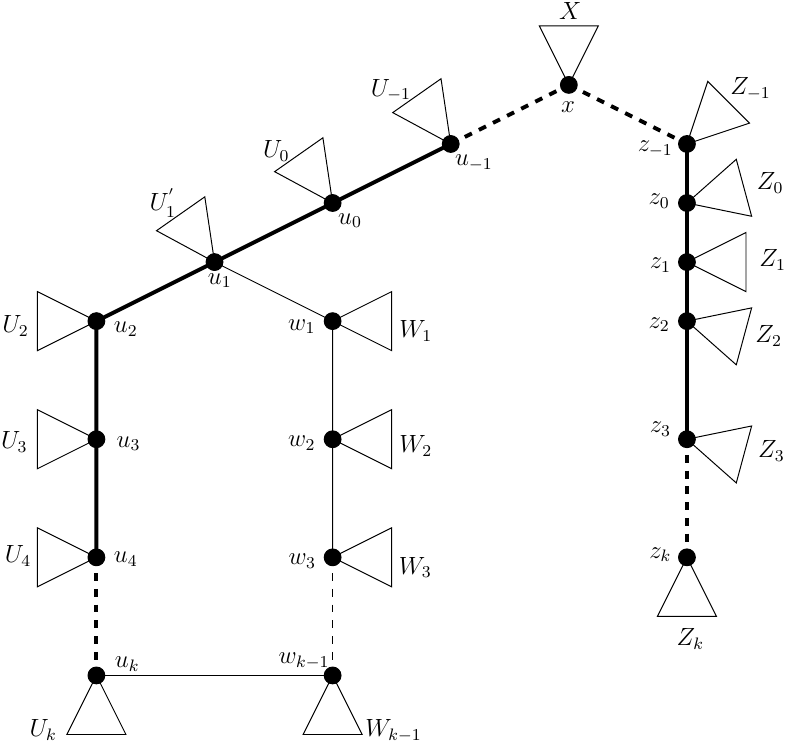}
\caption{The path $P_{u_k x z_k}$.}
\label{Fig:Section5case2}
\end{figure}
Let $P_{u_kxz_k}=u_ku_{k-1}\dots u_1u_0 u_{-1}u_{-2}\dots u_{-m} x$ $z_{-m} \dots z_{-1} z_0z_1\dots z_{k}$ be such a path where $z_i \not \in C_{\G}$ for all $i$, $u_i \not \in C_{\G}$ for $-m \leq i \leq 0$ as seen in Figure \ref{Fig:Section5case2}. We denote $U'_1$,$U_i$, $Z_i$ the subtrees that contains $u_1$, $u_i$ and $z_i$ after the removal of the edges on the path $P_{u_kxz_k}$ and the edges on the cycle $C_{\G}$. Let $Z_{>h}=\cup_{i=h}^{\ell} Z_i $,$W_{>h}=\cup_{i=h}^{k} W_i $, and $U_{>h}=\cup_{i=h}^{k} U_i $.
Since $x$ is the vertex with highest degree of $T_G$, we need to prove that \begin{align}&|V(X)| \geq |V(U_{-m})| \geq |V(Z_{-m})| \geq |V(U_{-m-1})|\geq |V(Z_{-m-1})|\cdots\label{eq:pathUZ} \\
& \geq |V(U_{1})|\geq |V(Z_{1})|\geq |V(U_2)|\geq |V(Z_2)| \cdots \geq |V(U_{k})|\geq |V(Z_{k})|\nonumber
,\end{align} more specifically in terms of the degrees 
\begin{align*}
&d_T(x)\geq d_T(u_{-m})\geq d_T(z_{-m})\geq d_T(u_{-m-1}) \geq d_T(z_{-m-1}) \geq \cdots \\
&\geq d_T(u_1)\geq d_T(z_{1})\geq d_T(u_2) \geq d_T(z_2) \cdots \geq d_T(u_{k}) \geq d_T(z_{k}).
\end{align*}
Suppose once again that Equation \eqref{eq:pathUZ} does not hold, namely there exist $i_0 \in \{2,\dots,k\}$ and $i_1 \in {1,\dots,\ell}$ such that $i_0 > i_1$ with $|V(U_{i_0})|<|V(z_{i_1})|$. We perform the switching branches as in the previous case but this time, the branches we are switching are $U_{i_0}$ and $Z_{i_1}$. 

\begin{align*}
\Lo_{i_0,s}&=(|V(Z_{i_1})|-|V(U_{i_0})|)\Big(\sum_{j=-m+s+2}^{-m+\frac{s+1}{2}}|2m+2-s|(|V(U_j)|-|V(U_{j-\frac{s+1}{2}})|)\\&+\sum_{j=-m}^{i_0-s}(2m+2+2j+s)(|V(U_{j+s})|-|V(Z_{j})|)\\
&+s(|V(U_{-m+s+1})|-|V(X)|)\\&+(2m+2+2_{i_0}-s)(|V(U_{>i_0})|-V(|Z_{>i_0-s})|)\Big)>0.
\end{align*}
That concludes the proof of Case 2 and the whole claim holds.
 \end{proof}
 
\subsection{Interplay between $\Lo_{i_0,s}$ and $\Gain_{i_0}$}
We have seen that in order to minimize $W(T_G)$, we must have Equations \ref{eq:pathWZ} for Case 1 and \ref{eq:pathUZ} for Case 2. We perfomed a switch operation between a branch along the cycle and one outside the cycle. However, the same operation will decrease $\Di(G)$ and the reverse will increase $\Di(G)$, this later is called ``gain". In other words,
let us again fix $i_0 \in \{2,\dots,k\}$ and $i_1 \in \{1,\dots,\ell\}$ such that $i_0 > i_1$. We now assume that $|V(W_{i_0})| < |V(Z_{i_1})|$ (resp. $|V(U_{i_0})| < |V(Z_{i_1})|$ ). We observe that switching back the branches $W_{i_0}$ and $Z_{i_1}$ (resp. $U_{i_0}$ and $Z_{i_1}$ ) will increase $\Di(G)$. From Lemma \ref{lem:T+uv}, we can compute the maximum gain $\Gain_{i_0}$:

\begin{align*}
\Gain_{i_0}=\sum_{j=k-i_0+2}^k 2(j-(k-i_0)-1)|V(U_j)|\left(|V(Z_{i_1})|-|V(W_{i_0})|\right), \quad \text{in Case 1}\\
\Gain_{i_0}=\sum_{j=k-i_0+2}^k 2(j-(k-i_0)-1)|V(W_j)|\left(|V(Z_{i_1})|-|V(U_{i_0})|\right), \quad \text{in Case 2}.
\end{align*}

We now can compare the amount of decrease of the Wiener index of $T_G$, called a gain and the decrease by $\Di(G)$ called the loss.

\begin{rem}\label{rem:gainloss}
If $i_0$ increases, the gain $\Gain_{i_0}$ will increase.
If $s$ increases, the loss $\Lo_{i_0,s}$ will increase.
\end{rem}

We get the following corollaries:
\begin{corollary}\label{progain}
If the loss exceeds the gain, it implies that the optimal unicyclic graph satified the concavity of all maximal paths in $T_G$. In other words, the construction of the optimal graph starts first from a greedy tree $T_G$, where we will then add the edge to vertices with lowest degrees (leaves if possible). 
\end{corollary}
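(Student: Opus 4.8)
The plan is to argue by contradiction, feeding the two switch operations already analysed into the decomposition \eqref{eq:diff}, $W(G)=W(T_G)-\Di(G)$: the branch switch from the proof of the preceding Claim, which makes an offending maximal path of $T_G$ more concave and \emph{lowers} $W(T_G)$, and the observation from the preceding ``interplay'' discussion that this very same switch lowers $\Di(G)$ by a controlled amount. If the first effect dominates the second, the switch strictly decreases $W(G)$, which cannot happen for an optimal $G$; so for optimal $G$ the path must already be concave.

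Concretely, suppose $G\in\Ud^{\G}$ is optimal but that some maximal path of $T_G$ fails to respect concavity around the vertex of maximum degree, so that \eqref{eq:pathWZ} (Case 1, maximum-degree vertex on $C_{\G}$) or \eqref{eq:pathUZ} (Case 2, maximum-degree vertex off $C_{\G}$) is violated by a pair of indices $i_0>i_1$; set $s=i_0-i_1$. Performing the branch switch of the Claim lowers the Wiener index of the tree by $\Lo_{i_0,s}>0$; by Lemma \ref{lem:T+uv} the same switch affects $\Di(G)$ only through the products $|V(U_i)||V(W_j)|$ in its formula, and since it transplants a larger branch from a deeper to a shallower slot of the cycle spine it can only decrease $\Di(G)$, by at most $\Gain_{i_0}$. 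Because the switch modifies only pendant subtrees, the cycle spine — in particular the path joining the endpoints of $e$ — and the degree sequence are preserved, so re-attaching $e$ produces a graph $G'\in\Ud^{\G}$ with
\[
W(G')\;\le\;\bigl(W(T_G)-\Lo_{i_0,s}\bigr)-\bigl(\Di(G)-\Gain_{i_0}\bigr)\;=\;W(G)-\bigl(\Lo_{i_0,s}-\Gain_{i_0}\bigr),
\]
which is strictly smaller than $W(G)$ as soon as the loss $\Lo_{i_0,s}$ exceeds the gain $\Gain_{i_0}$ — contradicting optimality. Hence every maximal path of $T_G$ is concave about the vertex of maximum degree; by Remarks \ref{lowdeg} and \ref{height} this forces $T_G$ to be a greedy tree, and since $e=u_kw_{k-1}$ (resp. $u_kw_k$) was selected through Lemma \ref{remedge} and, by Corollary \ref{Cor:deg} together with Remark \ref{bigbranch}, has the two lowest-degree cycle vertices as endpoints, the optimal graph is obtained from a greedy tree $T_G$ by joining two vertices of least degree, leaves whenever the girth allows it.

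I expect the main friction to be the middle step: making rigorous that the concavity-restoring switch decreases $\Di(G)$ by no more than $\Gain_{i_0}$. This requires pinning down exactly how the transplanted branches are redistributed among the components $U_i,W_j$ of Lemma \ref{lem:T+uv} and then invoking the monotonicity recorded in Remark \ref{rem:gainloss} to see that $\Gain_{i_0}$ is indeed the right bound in the worst case. A minor point is to confirm that re-inserting $e$ into the switched tree genuinely reproduces a unicyclic graph of girth $\G$ with the prescribed degree sequence, which is immediate from the fact that the branch switch acts only on pendant subtrees and leaves the cycle spine, hence the distance between the endpoints of $e$, intact. Finally note that no inductive ``clean-up'' is needed: exhibiting a single strictly better $G'$ already contradicts optimality, regardless of whether $G'$ is itself fully concave, and the whole argument is parity-free since $\Lo_{i_0,s}$ and $\Gain_{i_0}$ were recorded without reference to the parity of $\G$.
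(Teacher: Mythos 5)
Your proposal is correct and follows essentially the argument the paper intends: the paper states this corollary without proof, treating it as an immediate consequence of the decomposition $W(G)=W(T_G)-\Di(G)$ together with the quantities $\Lo_{i_0,s}$ and $\Gain_{i_0}$ computed in Section 5, and your contradiction argument is precisely that reasoning made explicit. The step you flag yourself — that the concavity-restoring switch changes $\Di(G)$ by exactly the amount read off from Lemma \ref{lem:T+uv}, because $\Di(G)$ depends only on the sizes of the components attached to the cycle — is the right place to be careful, but it goes through since the switch replaces $W_{i_0}$ (resp. $U_{i_0}$) by $Z_{i_1}$ wholesale and leaves all other cycle components untouched.
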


\begin{corollary}\label{proloss}
If the gain exceeds the loss, this implies, all the highest degrees were given to the vertices on the cycle.
\end{corollary}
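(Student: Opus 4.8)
The plan is to argue by contraposition from the switching machinery already developed. Recall that in the previous subsection we fixed $i_0 \in \{2,\dots,k\}$ and $i_1 \in \{1,\dots,\ell\}$ with $i_0 > i_1$, and we considered the effect on $W(G) = W(T_G) - \Di(G)$ of switching a branch $Z_{i_1}$ lying outside the cycle with a branch $W_{i_0}$ (Case~1) or $U_{i_0}$ (Case~2) lying on the cycle. The total change in $W(G)$ is the net of two competing quantities: the loss $\Lo_{i_0,s}$ coming from the change in $W(T_G)$, and the gain $\Gain_{i_0}$ coming from the change in $\Di(G)$. If the gain exceeds the loss for some admissible pair $(i_0,i_1)$ with $|V(Z_{i_1})| > |V(W_{i_0})|$ (resp.\ $|V(Z_{i_1})| > |V(U_{i_0})|$), then performing the switch strictly decreases $W(G)$, contradicting optimality. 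Hence in an optimal graph no such pair exists.

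First I would make precise what ``the gain exceeds the loss'' means in this statement: it is the hypothesis that $\Gain_{i_0} \geq \Lo_{i_0,s}$ holds whenever the two are in conflict, i.e.\ whenever the concavity of $T_G$ (Equations~\eqref{eq:pathWZ}, \eqref{eq:pathUZ}) would force $|V(W_{i_0})| \geq |V(Z_{i_1})|$ but $\Di(G)$ would prefer the reverse. Under this hypothesis, I would show that in an optimal $G$ we must have $|V(W_{i_0})| \leq |V(Z_{i_1})|$ (resp.\ $|V(U_{i_0})| \leq |V(Z_{i_1})|$) for every such pair: if the strict inequality in the ``wrong'' direction held, the switch of $W_{i_0}$ with $Z_{i_1}$ increases $\Di(G)$ by $\Gain_{i_0}$ and increases $W(T_G)$ by at most $\Lo_{i_0,s}$, for a net change $\Lo_{i_0,s} - \Gain_{i_0} \leq 0$ in $W(G)$, with equality only in degenerate cases that can be handled as in Lemma~\ref{lemx0y0}. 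Iterating this comparison over all conflicting pairs $(i_0,i_1)$, one concludes that the branches along the cycle are never larger than the off-cycle branches at the corresponding level, and in particular, invoking Corollary~\ref{Cor:deg} and the degree monotonicity along maximal paths, that the vertices carrying the largest degrees of the sequence $D$ all lie on the cycle $C_{\G}$.

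The main obstacle I anticipate is bookkeeping rather than conceptual: the loss $\Lo_{i_0,s}$ is an intricate alternating sum over the levels of the path (see the displayed formulas for $\Lo_{i_0,s}$ in Cases~1 and~2), and one must be careful that the monotonicities asserted in Remark~\ref{rem:gainloss} — namely that $\Gain_{i_0}$ increases with $i_0$ while $\Lo_{i_0,s}$ increases with $s$ — are used in the right regime so that the single worst-case comparison at the extreme indices dominates all the others. A secondary subtlety is ensuring the switch operation preserves membership in $\Ud^{\G}$ (the degree sequence and girth are unchanged, which is immediate since we only relocate a whole branch, exactly as in the proof of Corollary~\ref{corx0y0}) and that after the switch the labelling hypotheses of the earlier lemmas can be restored by re-ordering via Corollary~\ref{prop}. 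Once these are checked, the contrapositive argument closes and the corollary follows.
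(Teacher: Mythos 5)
Your first paragraph captures the intended argument, which is exactly what the paper (implicitly) relies on: the corollary is stated without a written proof and is meant to follow from the gain/loss exchange argument of Subsection 5.2 — if the gain exceeds the loss, then any pair with $|V(Z_{i_1})|>|V(W_{i_0})|$ (resp.\ $|V(Z_{i_1})|>|V(U_{i_0})|$) can be switched so that $\Di(G)$ increases by $\Gain_{i_0}$ while $W(T_G)$ increases by only $\Lo_{i_0,s}$, giving a net decrease of $W(G)=W(T_G)-\Di(G)$ and contradicting optimality; hence the optimal graph has $|V(W_{i_0})|\geq|V(Z_{i_1})|$ for all such pairs, i.e.\ the large components (and so the large degrees) sit on the cycle.

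However, your second paragraph reverses the inequality and, taken literally, argues for the negation of the corollary. You first misstate the conflict: concavity of $T_G$ (Equation \eqref{eq:pathWZ}) forces $|V(Z_{i_1})|\geq|V(W_{i_0})|$ for $i_1<i_0$, not the other way around; it is the maximization of $\Di(G)$ that prefers $|V(W_{i_0})|\geq|V(Z_{i_1})|$. You then set out to prove $|V(W_{i_0})|\leq|V(Z_{i_1})|$ in the optimal graph — but that is the conclusion of Corollary \ref{progain} (loss exceeds gain, tree concavity wins), not of Corollary \ref{proloss}. Your justification is also incorrect in that regime: if $|V(W_{i_0})|>|V(Z_{i_1})|$, switching $W_{i_0}$ with $Z_{i_1}$ moves the \emph{larger} component off the cycle, which \emph{decreases} $\Di(G)$ (the formula for $\Gain_{i_0}$ carries the factor $|V(Z_{i_1})|-|V(W_{i_0})|$, which is then negative), so the switch does not "increase $\Di(G)$ by $\Gain_{i_0}$". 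Finally, your closing sentence asserts simultaneously that "the branches along the cycle are never larger than the off-cycle branches" and that "the vertices carrying the largest degrees all lie on the cycle"; these two statements contradict each other, and only the second is what the corollary claims. The fix is simply to keep the first paragraph's orientation throughout: under the hypothesis that gain dominates loss, the exchange argument forces $|V(W_{i_0})|\geq|V(Z_{i_1})|$ (resp.\ $|V(U_{i_0})|\geq|V(Z_{i_1})|$) at every level, and then the degree statement follows since a cycle vertex whose attached component is at least as large as every off-cycle component at the comparable level must carry at least as large a degree.
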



Assume that Corollary \ref{proloss} applies, namely all the highest degrees correspond to vertices on the cycle. Those vertices are already ordered as in Theorem \ref{thm:cycle} in the optimal graph. The ordering of the rest of the vertices is performed as follows:

\begin{lem}\label{prop2}
Let $G \in \Ud^{\G}$ and $G=C_{\G}(T_1,\dots,T_{\G})$. Let $T'$ be the tree obtained from $T_1,T_2,\dots,T_{\G}$ by identifying all the roots $v_i$ as a single vertex $r$ in such a way that $|V(T_1)|\geq |V(T_2)|\geq \cdots \geq |V(T_{\G})|$. 
Let $\Gr(T')$ be the greedy tree with the same degree sequence as $T'$ and $\Gr(T')$ can be decomposed as $T'_1,T'_2,\dots,T'_{\G}$ such that the degrees of the roots of $T_i$ and $T'_i$ remain the same. 
Let $G'=C_{\G}(T'_1,T'_2,\dots,T'_{\G})$ . 
Then $W(G)\geq W(G')$.

\end{lem}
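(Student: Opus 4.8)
The plan is to read $W(G)$ off from the formula \eqref{EqWiener} after collapsing the cycle to a point, so that two of its three non-constant contributions are governed by the greedy-tree results of Section \ref{trees}, and to treat the remaining, cycle-dependent contribution separately. Identifying the roots $v_1,\dots,v_\G$ of the branches with the single vertex $r$ of $T'$, one checks, by splitting a pair $\{x,y\}\subseteq V(T')$ according to whether $x,y$ lie in one $T_i$, in two different branches, or one of them is $r$, that
\[
W(T')=\sum_{i=1}^{\G}W(T_i)+\sum_{i<j}\bigl(\A_i\ell_j+\ell_i\A_j\bigr)
\qquad\text{and}\qquad
D_{T'}(r)=\sum_{i=1}^{\G}\A_i .
\]
Substituting these into \eqref{EqWiener} collapses it to
\[
W(G)=\Bigl(n-\tfrac{\G}{2}\Bigr)\Bigl\lfloor\tfrac{\G^{2}}{4}\Bigr\rfloor+(\G-1)\,D_{T'}(r)+W(T')+\sum_{i<j}\ell_i\ell_j\,d_{C_\G}(v_i,v_j),
\]
and the same identity holds for $W(G')$ with $T'$, $\ell_i$, $v_i$ replaced by $\Gr(T')$, $\ell_i'$, $v_i'$. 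The first summand depends only on $n$ and $\G$ and cancels. Since we are in the regime of Corollary \ref{proloss}, the cycle carries the $\G$ largest degrees, so $\deg_{T'}(r)=\sum_i\deg_{T_i}(v_i)$ is maximal in $T'$ and $r$ may be taken to be the root $r'$ of $\Gr(T')$.

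\emph{The tree part.} Theorem \ref{ThmWang} gives $W(\Gr(T'))\le W(T')$, with equality iff $T'$ is greedy. I would pair this with the companion fact that the greedy tree also minimises the distance of its root among all trees with its degree sequence: with $N_{\le t}$ the number of vertices within distance $t$ of the root, the defining step of the greedy algorithm (see Remark \ref{height}) places the largest available degrees in the lowest levels, so level $t+1$, whose size equals $\sum(\deg-1)$ over level $t$, is as large as possible at each stage; an induction on $t$ gives $N_{\le t}(\Gr(T'))\ge N_{\le t}(T')$, hence $D_{\Gr(T')}(r')\le D_{T'}(r)$. (Equivalently, $(\G-1)D_{T'}(r)+W(T')$ is the Wiener index of $T'$ with the root weighted by $\G$, and the weighted versions of the switching arguments of \cite{Wang2009} apply.) Combining, $(\G-1)D_{\Gr(T')}(r')+W(\Gr(T'))\le(\G-1)D_{T'}(r)+W(T')$.

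\emph{The cycle part, and the main obstacle.} It remains to compare $\sum_{i<j}\ell_i\ell_j d_{C_\G}(v_i,v_j)$ with $\sum_{i<j}\ell_i'\ell_j' d_{C_\G}(v_i',v_j')$. Replacing $G$ by $ord(G)$ (Corollary \ref{prop}) we may assume both $G$ and $G'$ are arranged optimally on the cycle, and we take the decomposition of $\Gr(T')$ that bundles its largest principal subtrees into the branches of largest root-degree. The crux is that the multiset $\{\ell_i'\}$ need not equal $\{\ell_i\}$, and the optimally arranged value of $\sum_{i<j}m_im_j d_{C_\G}$ is \emph{not} monotone under majorisation of $(m_i)$, so the desired inequality for the cycle term is not automatic. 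I would therefore run the passage from $G$ to $G'$ as a finite sequence of moves of three kinds: subtree-switches internal to a single $T_i$ (the moves behind Theorems \ref{ThmWang}--\ref{majotree}), which change neither $\{\ell_i\}$ nor the cycle term and do not increase $W(T')$ or $D_{T'}(r)$; re-orderings of the branches around the cycle (Corollary \ref{prop}); and moves transferring or re-bundling a subtree across branches, for which one records simultaneously the effect on $W(T')$, on $(\G-1)D_{T'}(r)$, and on the optimally arranged cycle term. When $T'$ is already greedy a direct expansion — in the spirit of the proof of Lemma \ref{lem:diff}, exploiting that $d_{C_\G}$ is symmetrically decreasing — shows that the canonical bundling already minimises the cycle term; otherwise the gain on $W(T')+(\G-1)D_{T'}(r)$, being of the order of the number of vertices crossing a long path, absorbs any increase of the cycle term, exactly as in the gain/loss balance of Section \ref{outside} (Remarks \ref{bigbranch}, \ref{rem:gainloss}, Corollaries \ref{progain}--\ref{proloss}). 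Iterating until no branch is out of order and no cross-branch move helps leaves precisely $G'$, and summing the inequalities gives $W(G)\ge W(G')$. This last bookkeeping — showing every cross-branch step needed to reach the greedy tree is globally non-increasing once the cycle term is folded in — is where the argument must be carried out most carefully.
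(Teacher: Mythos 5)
Your decomposition is exactly the one the paper uses: substituting $T'$ into \eqref{EqWiener} to get
\[
W(G)=W(T')+\Bigl(n-\tfrac{\G}{2}\Bigr)\Bigl\lfloor\tfrac{\G^2}{4}\Bigr\rfloor+(\G-1)\sum_i\A_i+\sum_{i<j}\ell_i\ell_j\,d_{C_\G}(v_i,v_j),
\]
and then comparing the three non-constant terms separately. Your treatment of the first two terms matches the paper's: $W(T')\ge W(\Gr(T'))$ is Theorem \ref{ThmWang}, and $\sum_i\A_i\ge\sum_i\A'_i$ (i.e.\ $D_{T'}(r)\ge D_{\Gr(T')}(r')$) is asserted in the paper with only the parenthetical caveat that the largest degrees sit on the cycle; your level-counting induction actually supplies a justification the paper omits, which is a genuine improvement.

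The gap is in the cycle term. The paper disposes of it in one line: $(\ell_1,\dots,\ell_\G)\LE(\ell'_1,\dots,\ell'_\G)$, ``which implies'' $\sum_{i<j}\ell_i\ell_j d_{C_\G}(v_i,v_j)\ge\sum_{i<j}\ell'_i\ell'_j d_{C_\G}(v_i,v_j)$. You are right to be suspicious that majorisation of the branch sizes does not by itself yield this inequality for a general placement, and that the multiset $\{\ell'_i\}$ genuinely differs from $\{\ell_i\}$. But having correctly located the crux, you do not resolve it: your proposed sequence of cross-branch moves with simultaneous bookkeeping of $W(T')$, $(\G-1)D_{T'}(r)$ and the cycle sum is a plan, not a proof, and you say as much (``this last bookkeeping \dots is where the argument must be carried out most carefully''). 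In particular the claim that the gain on $W(T')+(\G-1)D_{T'}(r)$ ``absorbs any increase of the cycle term'' is exactly the quantitative estimate that would need to be proved, and nothing in Remarks \ref{bigbranch}, \ref{rem:gainloss} or Corollaries \ref{progain}--\ref{proloss} delivers it in the form you need. To close the argument along the paper's lines you would instead want to show directly that, with the root degrees (hence the positions $v_i$) fixed and the arrangement of Corollary \ref{prop} in force, a single transfer of a subtree from a smaller branch to a larger one does not increase $\sum_{i<j}\ell_i\ell_j d_{C_\G}(v_i,v_j)$ --- a Schur-concavity statement for this particular quadratic form --- and then realise the passage from $(\ell_i)$ to $(\ell'_i)$ as a chain of such transfers. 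As it stands, your write-up establishes the first two inequalities but leaves the third, and hence the lemma, unproven.
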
  

\begin{proof}
Substituting $T'$ in Equation \ref{EqWiener}, we have
\begin{align*}
W(G)=W(T')+\left(n-\frac{\G}{2}\right)\left\lfloor\frac{\G^2}{4} \right\rfloor+(\G-1)\sum_{i=1}^{\G}\A_i+\sum_{i=1}^{\G-1}\sum_{j=i+1}^{\G} \ell_i\ell_jd_{C_{\G}}(v_i,v_j).
\end{align*}

Let us denote $\A'_i=D_{\Gr(T')}(v_i)$, and $\ell_i'=|V(T_i')|-1$ where $T_i'$ are the branches in the greedy tree $\Gr(T')$. As the girth $\G$ is fixed, the term $\left(n-\frac{\G}{2}\right)\left\lfloor\frac{\G^2}{4} \right\rfloor$ is invariant under the reshuffling of the vertices. From Theorem \ref{ThmWang}, $$W(T')\geq W(\Gr(T')).$$ Moreover, $$\sum_{i=1}^{\G}\A_i \geq \sum_{i=1}^{\G}\A'_i ,$$ (note that this is true only if vertices with highest degrees are on the cycle) and the sequence $(\ell_1,\ell_2,\dots, \ell_{\G})$ is majorized by $(\ell'_1,\ell'_2,\dots,\ell'_{\G})$ which implies $$\sum_{i=1}^{\G-1}\sum_{j=i+1}^{\G} \ell_i\ell_jd_{C_{\G}}(v_i,v_j)\geq \sum_{i=1}^{\G-1}\sum_{j=i+1}^{\G} \ell'_i\ell'_jd_{C_{\G}}(v_i,v_j).$$ 
\end{proof}

\section{Optimal unicyclic graphs with given degree sequence and fixed girth}\label{fixed girth}
 
 In this section, we will discuss the structure of the optimal unicyclic graphs that minimizes the Wiener index among unicyclic graphs with given degree sequence and fixed girth $\G$.
Note that the ``optimal'' graph may not be unique.

We first need the following lemma on centroids of unicyclic graphs:
\begin{lem} (\cite{miroslav})
	For a unicyclic graph $G$ with cycle $C_{\G}$, the centroid $M(G)$ is either a $K_1$ or $K_2$ or $M(G)\subset C_{\G}$. \label{centroid}
\end{lem}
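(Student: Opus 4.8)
The plan is to prove the statement by analyzing the distance function $D_G$ on a unicyclic graph and exploiting the classical fact that the centroid of a tree is either a vertex or an edge. The key combinatorial tool is the following: for any edge $e=xy$ of $G$, comparing $D_G(x)$ and $D_G(y)$ reduces to counting vertices on either ``side''. In a tree this is clean because removing $e$ disconnects the graph; in a unicyclic graph it is clean precisely for the edges \emph{not} on the cycle, since removing a non-cycle edge still disconnects $G$ into two components. So the first step is to show that if $e=xy$ is an edge \emph{not} on $C_{\G}$, with $A$ the component of $G-e$ containing $x$ and $B$ the one containing $y$, then $D_G(y)-D_G(x)=|V(A)|-|V(B)|$; consequently moving from $x$ to $y$ along $e$ strictly decreases the distance function unless $|V(A)|=|V(B)|$. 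This is exactly the standard tree argument and is unaffected by the presence of the cycle, as long as $e$ itself is a bridge.

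Next I would show that the centroid is ``pushed toward the cycle'' along these bridges. Consider a vertex $v\notin C_{\G}$; there is a unique path from $v$ to the cycle, and its first edge $e=vw$ is a bridge. The component of $G-e$ not containing $v$ contains the entire cycle plus everything hanging off the other $\G-1$ cycle vertices, so it has at least $\G-1\ge 2$ more vertices than... more carefully, one shows $D_G(w)\le D_G(v)$, with equality only in a degenerate small case; iterating, any centroid vertex must lie on $C_{\G}$, \emph{unless} the centroid consists of vertices off the cycle. To handle that exceptional situation, suppose $M(G)$ contains a vertex $v\notin C_{\G}$. Then $D_G$ is minimized at $v$, so for the bridge $e=vw$ toward the cycle we need $D_G(w)\ge D_G(v)$, forcing $|V(B)|\le|V(A)|$ where $A$ is the side of $v$; but $B$ contains all of $C_{\G}$. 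This squeezes the graph so tightly that $M(G)$ can only be a single vertex $K_1$, or a single edge $K_2$ (the edge $vw$, when the two sides are exactly balanced), and in the $K_2$ case one checks $w$ may or may not be on the cycle. If instead every centroid vertex lies on $C_{\G}$, then $M(G)\subseteq C_{\G}$ directly.

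Assembling: let $v\in M(G)$. Either $v\in C_{\G}$, or $v\notin C_{\G}$. In the latter case, using the bridge argument above, $D_G$ restricted to the unique $v$-to-cycle path is non-increasing and in fact strictly decreasing as soon as one side outweighs the other; since the cycle side always carries substantial mass, equality of the two sides can happen for at most one bridge along this path, which pins $M(G)$ down to be $K_1$ or $K_2$. I would also invoke the general fact (true in any connected graph) that $M(G)$ induces a connected subgraph with the property that along any edge the distance function changes by a predictable bridge-count, so $M(G)$ cannot ``straddle'' the cycle in a complicated way. The main obstacle is the bookkeeping in the exceptional case where $M(G)$ has a vertex off the cycle: one has to rule out $M(G)$ being a path of length $\ge 2$ off the cycle (handled by strict monotonicity of $D_G$ along bridges once the heavier side strictly dominates) and carefully treat the tie case to see it yields exactly $K_2$. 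Everything else is the classical centroid argument transported verbatim, since the cycle behaves like a single heavy rooted subtree when viewed from any vertex not on it.
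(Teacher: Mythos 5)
First, note that the paper does not prove this lemma at all: it is quoted from \cite{miroslav}, so there is no in-house argument to compare yours against, and your attempt has to stand on its own. Your central tool --- the bridge identity $D_G(y)-D_G(x)=|V(A)|-|V(B)|$ for a bridge $e=xy$ --- is correct, and the observation that every edge off the cycle is a bridge is indeed the right way to import the classical tree argument.

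However, the sketch has genuine gaps at exactly the points where the unicyclic case differs from the tree case. (1) The intermediate claim that ``one shows $D_G(w)\le D_G(v)$'' for $v$ off the cycle and $w$ its neighbour toward the cycle is false: take $C_3$ with a single pendant path of length $10$; the centroid is the middle vertex of the path, and from there $D_G$ strictly \emph{increases} toward the cycle. (2) The step ``this squeezes the graph so tightly that $M(G)$ can only be $K_1$ or $K_2$'' is the entire content of the lemma in the off-cycle case, and you assert it rather than prove it. What is actually needed is: (i) at most one branch at $v$ can contain at least $n/2$ vertices, since two such branches would be disjoint and both avoid $v$; (ii) strict convexity of $D_G$ along any path of bridges (the increment across the $j$-th bridge equals $2|S_j|-n$, where $S_j$ is the part left behind, and $|S_j|$ strictly grows), which excludes centroid vertices at bridge-distance at least $2$ from $v$; and, crucially, (iii) a separate argument for candidate centroid vertices lying in a \emph{different} pendant tree $T_j$, because the $v$--$u$ path then traverses cycle edges, which are not bridges, so the increment formula and the convexity argument simply do not apply along that stretch. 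Point (iii) is precisely where the alternative $M(G)\subseteq C_{\G}$ with $|M(G)|\ge 3$ originates, and it is not addressed. (3) You invoke ``the general fact (true in any connected graph) that $M(G)$ induces a connected subgraph'': this is false --- the median of a connected graph can be disconnected (in fact every graph arises as the median of some connected graph); connectivity of the centroid for unicyclic graphs is itself something that must be proven. So the skeleton is plausible, but as written the argument does not go through.
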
  

Let us build three special graphs, which possess different types of centroids.

\begin{defn}\label{greedyuni}
Let $D$ be a degree sequence of a unicyclic graph $G$, with fixed girth $\G$. The following construction leads to \textbf{$M(G) \subset C_{\G}$ and $M(G)$ is a $K_1$ or $K_2$}. We call such graph ``the greedy unicyclic graph", and it is denoted by $\Gr_u(D,\G)$. It is obtained from the subsequent ``greedy algorithm'':
\begin{itemize}
 \item[ i)] The vertex with largest degree will be on the cycle and will be labelled $v$ (the root);
 \item[ ii)] Label the two neighbors of $v$ on the cycle as $v_{1}$, $v_{2}$, and the neighbors of $v$ outside the cycle as $v_3, \dots,$ assign the largest degrees available to them such that $\deg(v_1) \geq \deg(v_2) \geq \deg(v_3)\dots$;
 \item[ iii)] Label the neighbors of $v_1$ and $v_2$ (except $v$) $v_{11},v_{12},v_{13},\dots$ 
 and $v_{21},v_{22}$ $,v_{23},\dots$, respectively, with $v_{11}$ and $v_{21}$ on the cycle. Assign the largest degrees available such that $$\deg(v_{11}) \geq \deg(v_{21})\geq \deg(v_{12})\geq \deg(v_{13})\geq \dots\deg(v_{21}) \geq\dots$$
 \item[ iv)] Then label the neighbors of $v_3, v_4, \dots$ such that they take all the largest degrees available and that  $\deg(v_{31}) \geq \deg(v_{32})\geq \dots\geq \deg(v_{41}) \geq \deg(v_{42})\geq \dots$. 
 
\noindent Note that the degree of a vertex on the cycle must be at least 2. Hence, we should keep some greater or equal to 2 degrees for the vertices on the cycle not yet labelled.
\item [v)]Repeat (iii) for all the newly labelled vertices on the cycle, always start with the neighbors of the labeled vertex with largest degree which are on the cycle and whose neighbors are not labeled yet.
\item [vi)] Repeat (iv) for the newly labelled vertices not on the cycle.
\item[vii)] When all the vertices on the cycle have been labelled, continue with the labelling as in Lemma \ref{prop2}.
\end{itemize}
\end{defn}
Figures \ref{fig:example 1} D, F and G are examples of such a graph.

\begin{defn}\label{CC}
Let $D$ be a degree sequence of a unicyclic graph $G$, with fixed girth $\G$. This configuration gives \textbf{$M(G)\subset C_{\G}$ and $M(G)\neq K_1,K_2$}.  We call such graph ``cycle-centered graph" and it is denoted by $\CC(D,\G)$ and is obtained as follows:
\begin{itemize}
 \item[ i)] We assign the largest degrees to all the vertices on the cycle $C_{\G}$ ordered according to Corollary \ref{Cor:deg}.
 \item[ ii)] We use Lemma \ref{prop2} to label the rest of the vertices not on the cycle.
 \end{itemize}

\end{defn}

Figures \ref{fig:example 1} C and E are examples of such a graph.
\vspace{2cm}

Let $D=(d_1,\dots,d_t,1,\dots,1)$, with $d_1\geq d_2 \geq \cdots \geq d_n$, and $d_t\geq 2$. We denote $D'$ the reduced degree sequence of $D$, i.e. $D'=(d_1,\dots,d_t)$. By the handshake lemma, the number of leaves is fixed. Therefore, the unicyclic graphs obtained from $D$ and $D'$ are the same. 


\begin{defn}\label{outgreedy}
Let $D$ be a degree sequence of a unicyclic graph $G$, with fixed girth $\G$. In this last configuration, \textbf{$M(G) \not \subset C_{\G}$}. The out-greedy unicyclic graph denoted by $\Gr_u^*(D,\G)$ is achieved through the following algorithm:
\begin{itemize}
\item [i)] Perform the greedy algorithm for a tree on the reduced degree sequence $D'$ and label the root as $v$. 
\item [ii)] Choose two vertices $u_1$ and $u_2$  which are either leaves or pseudo-leaves that belong to branches with the greatest number of vertices such that $d(u_1,u_2)=\G-1$. 
 Add an edge between them. If $u_1$ and $u_2$ are both leaves, the resulting graph will have $D$ as degree sequence. Otherwise, we remove a neighbor leaf from $u_1$ and/or $u_2$, respectively to keep the same degree sequence. Make sure that the choice of $u_1$ and $u_2$ will ``preserve'' the greedy algorithm.


\end{itemize}  
Note that for the centroid to be outside of the cycle we need, $\frac{\G}{2} < h(\Gr(D'))$, which implies $v \not \in P_{u_1u_2}$.
\end{defn}
Figures \ref{fig:example 1} A and B are examples of such a graph.

\begin{rem}
	Note that these definitions are not mutually exclusive.
	\begin{itemize}
		\item In fact if $\frac{\gamma}{2} << h(\Gr(D'))$, the definitions \ref{greedyuni} and \ref{CC} coincide with $M(G)\subset C_{\G}$.
		\item If $\frac{\gamma}{2} \geq h(\Gr(D'))$, the definitions \ref{greedyuni} and \ref{outgreedy} coincide with $M(G)=K_1,K_2$. 
	\end{itemize}
\end{rem}

\begin{theorem}\label{thm:fixedcycle}
Let $D$ be a degree sequence and $G \in \Ud^{\G}$, then
\[W(G)\geq \min\{W(\Gr_u(D,\G)),W(\CC(D,\G)),W(\Gr_u^*(D,\G))\}.\]
\end{theorem}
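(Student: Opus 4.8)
The plan is to reduce the minimization over the whole class $\Ud^{\G}$ to a comparison among the three explicit candidates, by first showing that an optimal graph $G$ must have one of the three centroid configurations allowed by Lemma~\ref{centroid}, and then showing that in each configuration $G$ is forced to coincide with (or be no better than) the corresponding candidate. So the proof splits according to whether $M(G)\subset C_{\G}$ with $M(G)=K_1$ or $K_2$, whether $M(G)\subset C_{\G}$ with $M(G)$ larger, or whether $M(G)\not\subset C_{\G}$; these three cases are exactly the hypotheses under which $\Gr_u(D,\G)$, $\CC(D,\G)$ and $\Gr_u^*(D,\G)$ were constructed. It suffices to prove $W(G)\geq W(\Gr_u(D,\G))$ in the first case, $W(G)\geq W(\CC(D,\G))$ in the second, and $W(G)\geq W(\Gr_u^*(D,\G))$ in the third, since then $W(G)$ is at least the minimum of the three regardless of which case occurs.

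First I would set up, for an optimal $G$, the decomposition $W(G)=W(T_G)-\Di(G)$ from \eqref{eq:diff}, with $T_G$ obtained by deleting $e=u_kw_{k-1}$ (resp.\ $u_kw_k$), which by Lemma~\ref{remedge} is the cheapest tree obtainable from $G$. By Corollaries~\ref{progain} and~\ref{proloss}, exactly one of two regimes holds: either the loss $\Lo_{i_0,s}$ always dominates the gain $\Gain_{i_0}$, in which case $T_G$ must be a greedy tree and the extra edge is placed on the lowest-degree vertices available — this is precisely the out-greedy construction $\Gr_u^*(D,\G)$ when $\tfrac{\G}{2}<h(\Gr(D'))$ (centroid outside the cycle), or it degenerates to a $K_1/K_2$-centroid greedy-unicyclic shape when $\tfrac{\G}{2}\geq h(\Gr(D'))$; or the gain dominates, in which case all highest degrees sit on the cycle, and then Theorem~\ref{thm:cycle}, Corollary~\ref{Cor:deg}, Corollary~\ref{prop} and Lemma~\ref{prop2} pin down the arrangement both on the cycle and off it, giving either $\CC(D,\G)$ (when the cycle can absorb enough degree that $M(G)$ is strictly bigger than $K_2$) or $\Gr_u(D,\G)$ (when $M(G)=K_1,K_2$ on the cycle). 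In each branch one invokes: Lemma~\ref{prop2} to fix the off-cycle trees as greedy subtrees given the root degrees; Theorem~\ref{thm:cycle} / Corollary~\ref{Cor:deg} to order the cycle vertices; and the majorization result Theorem~\ref{majotree} together with Remark~\ref{bigbranch} to argue that pushing larger subtrees toward the diametral ends of the cycle path is optimal for $W(T_G)$ while not costing more than it gains in $\Di(G)$.

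The crucial bookkeeping step, and the main obstacle, is the trade-off analysis between $\Lo_{i_0,s}$ and $\Gain_{i_0}$: one must verify that the quantitative comparison of the two explicit sums (the loss formula from the switching argument in Section~\ref{outside} and the gain formula from Lemma~\ref{lem:T+uv}) is monotone in the right parameters — $\Gain_{i_0}$ increasing in $i_0$, $\Lo_{i_0,s}$ increasing in $s$ as recorded in Remark~\ref{rem:gainloss} — so that the "first minimize $T_G$, then improve $\Di(G)$" heuristic is actually rigorous and the dichotomy of Corollaries~\ref{progain}–\ref{proloss} is exhaustive. Concretely I would compute, for a candidate switch of a branch between level $i_1$ off the cycle and level $i_0$ on the cycle, the net change $\Lo_{i_0,s}-\Gain_{i_0}$, show its sign is determined by whether $\tfrac{\G}{2}$ exceeds $h(\Gr(D'))$ (this is the inequality flagged at the end of Definition~\ref{outgreedy}), and conclude that in the first regime no such move helps, forcing the greedy/out-greedy shape, while in the second regime every highest-degree vertex is driven onto the cycle, forcing $\CC(D,\G)$ or $\Gr_u(D,\G)$.

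Finally I would handle the boundary cases where the definitions overlap (as in the Remark following Definition~\ref{outgreedy}): when $\tfrac{\G}{2}\ll h(\Gr(D'))$ the graphs $\Gr_u(D,\G)$ and $\CC(D,\G)$ coincide, and when $\tfrac{\G}{2}\geq h(\Gr(D'))$ the graphs $\Gr_u(D,\G)$ and $\Gr_u^*(D,\G)$ coincide, so the three-way minimum is well-defined and the case analysis leaves no gap. Assembling the three inequalities $W(G)\geq W(\Gr_u(D,\G))$, $W(G)\geq W(\CC(D,\G))$, $W(G)\geq W(\Gr_u^*(D,\G))$ — each valid in its own centroid regime and vacuously weaker than the claimed bound otherwise — yields $W(G)\geq\min\{W(\Gr_u(D,\G)),W(\CC(D,\G)),W(\Gr_u^*(D,\G))\}$, as desired.
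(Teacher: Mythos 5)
Your proposal follows essentially the same route as the paper's proof: the trichotomy on the centroid position from Lemma~\ref{centroid}, the decomposition $W(G)=W(T_G)-\Di(G)$ with $e$ chosen via Lemma~\ref{remedge}, and the identification of each centroid regime with one of $\Gr_u^*(D,\G)$, $\Gr_u(D,\G)$, $\CC(D,\G)$ via the loss/gain dichotomy and Lemma~\ref{prop2}. The extra detail you flag (the quantitative $\Lo_{i_0,s}$ versus $\Gain_{i_0}$ bookkeeping) is the same step the paper also leaves at the level of a sketch, so the two arguments match in both structure and level of rigour.
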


\begin{proof}
Let $G \in \Ud^{\G}$. We may assume the vertices on the cycle are ordered as in Theorem \ref{thm:cycle}. After removing the edge $e$ between the lowest degrees, we obtain a tree $T$. The following formula then holds:
\[W(G)=W(T_G)-\Di(G).\]
In order to minimize $W(G)$, we have to minimize $W(T_G)$, while still trying to maximize $\Di(G)$.

We need to discuss the three cases depending on the centroids of the graphs.

\textbf{Case 1: $M(G)\not \subset C_{\G} $}. In other words the centroid is either a vertex or an edge not on the cycle. Note that this case only happens if $\frac{\G}{2}<h(T_G)$. After removing the edge $e$, in order to minimize the Wiener index, all the paths have to satisfy the conditions in Lemma \ref{lem:diff} and the centroid of the tree will be the centroid of $G$. On the other hand, to maximize $\Di(G)$, we need to balance all the vertices on the cycle. The only possible option is to attach as much branches on the cycle and still preserving the ordering on each path. This is exactly the construction of the outside greedy unicyclic graph $\Gr_u^*(D,\G)$.

\textbf{Case 2: $M(G) \subset C_{\G}\, \text{and is a}\, K_1 \,\text{or}\,K_2  $}
We still have the ordering of all the other paths not from the cycle following Lemma \ref{lem:diff} and even for the path obtained from the cycle. This is exactly the construction of the greedy unicyclic graph $\Gr_u(D,\G)$.

\textbf{Case 3: $M(G) \subset C_{\G}\, \text{different from}\, K_1 \,\text{or}\,K_2  $}
From Remark \ref{rem:diff} and Lemma \ref{prop2}, the unicyclic graph that maximizes $\Di(G)$, while having the least $W(T)$ is $\CC(D,\G)$.
\end{proof}

Figures \ref{fig:example 1} and \ref{fig:example 2} show the different optimal graphs for a given degree sequence with fixed girth. We can see that depending on the sequence and the girth, the minimum Wiener index is achieved by one of our three special unicyclic graphs.

\begin{figure}[H]
	\centering
	\begin{subfigure}[b]{0.4\linewidth}
		\includegraphics[width=\linewidth]{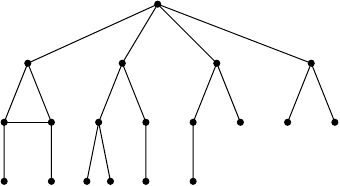}
		\caption{$W(\Gr_u^*(D_1,3))=598$}
	\end{subfigure}
	\begin{subfigure}[b]{0.4\linewidth}
		\includegraphics[width=\linewidth]{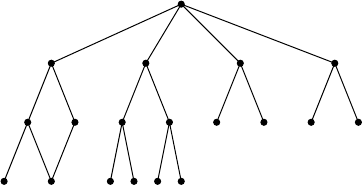}
		\caption{$W(\Gr_u^*(D_1,4))=590$}
	\end{subfigure}
	\begin{subfigure}[b]{0.4\linewidth}
		\includegraphics[width=\linewidth]{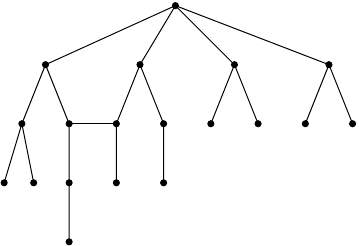}
		\caption{$W(\CC(D_1,5))=575$}
	\end{subfigure}
	\begin{subfigure}[b]{0.4\linewidth}
		\includegraphics[width=\linewidth]{C6_4333333322_572gr.pdf}
		\caption{$W(\Gr_u(D_1,6))=572$}
	\end{subfigure}
	\begin{subfigure}[b]{0.4\linewidth}
		\includegraphics[width=\linewidth]{C6_4333333322_572cy.pdf}
		\caption{$W(\CC(D_1,6))=572$}
	\end{subfigure}
	\begin{subfigure}[b]{0.4\linewidth}
		\includegraphics[width=\linewidth]{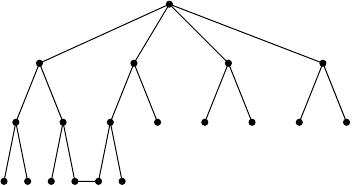}
		\caption{$W(\Gr_u(D_1,7))=565$}
	\end{subfigure}
	\begin{subfigure}[b]{0.4\linewidth}
		\includegraphics[width=\linewidth]{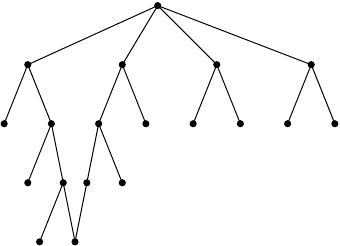}
		\caption{$W(\Gr_u(D_1,8))=576$}
	\end{subfigure}
	\caption{Optimal graphs for the sequence $D_1=(4,3,3,3,3,3,3,3,2,2,1,\dots,1)$ with fixed $\G$ and their respective Wiener indices.}
	\label{fig:example 1}
\end{figure}

\begin{figure}
	\centering
	\begin{subfigure}[b]{0.4\linewidth}
		\includegraphics[width=\linewidth]{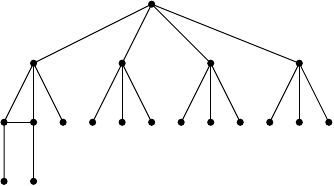}
		\caption{$W(\Gr_u^*(D_2,3))=538$}
	\end{subfigure}
	\begin{subfigure}[b]{0.4\linewidth}
		\includegraphics[width=\linewidth]{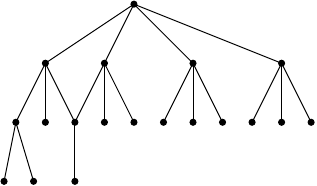}
		\caption{$W(\Gr_u(D_2,4))=538$}
	\end{subfigure}
	\hspace{1cm}
	\begin{subfigure}[b]{0.23\linewidth}
		\includegraphics[width=\linewidth]{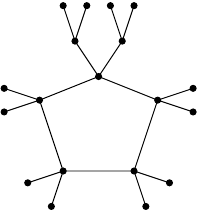}
		\caption{$W(\CC(D_2,5))=519$}
	\end{subfigure}
	\hspace{1cm}
	\begin{subfigure}[b]{0.23\linewidth}
		\includegraphics[width=\linewidth]{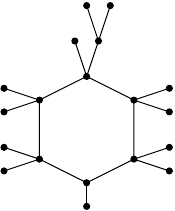}
		\caption{$W(\CC(D_2,6))=528$}
	\end{subfigure}
	\hspace{1cm}
	\begin{subfigure}[b]{0.23\linewidth}
		\includegraphics[width=\linewidth]{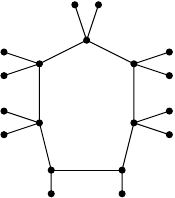}
		\caption{$W(\CC(D_2,7))=523$}
	\end{subfigure}
	
	\caption{Optimal graphs for the sequence $D_2=(4,4,4,4,4,4,3,3,1,\dots,1)$ with fixed $\G$ and their respective Wiener indices.}
	\label{fig:example 2}
\end{figure}

Now, what will happen if we don't fix the girth? For a degree sequence of the form $(d_1,2,2,\dots,2,1,\dots,1)$ with $d_1\geq 3$, the unicyclic graph that minimizes the Wiener index has the largest possible girth. For other degree sequences, even though larger girth initially implies a smaller Wiener index, there will be a point where it is no longer best to increase the girth, as seen in Figure \ref{fig:example 1} from F to G, or in Figure \ref{fig:example 2} from C to D. For sure, we can easily rule out the out-greedy unicyclic graph $\Gr_u^*(D,\G)$  as a canditate for our optimal unicyclic graph. In fact, it is easy to see that one of the other two candidates will have a smaller Wiener index if the girth is not fixed. However the best girth will depend on the interplay of the loss and gain mentioned in Section 5. We have the following conjecture:

\begin{conjecture}\label{thm:main}
Let $D=(d_1,d_2,\dots,d_{t-1},d_{t},1,\dots,1)$ such that $d_{t}\geq 2$ and $d_2\neq 2$ be a degree sequence and $G$ a unicyclic graph with degree sequence $D$. Then \[W(G)\geq \min\{\Gr_u(D,\G^*),\CC(D,\G^*)\}\]
where $\G^*=\begin{cases}2*h(\Gr(D'))-1 & \text{if  $h(\Gr(D'))$ is achieved by a unique leaf}\\2*h(\Gr(D'))& \text{otherwise}\end{cases}$,

 where $\Gr(D')$ is the greedy tree obtained from the degree sequence $D'=(d_1,d_2,\dots,d_{t-1}-1,d_{t}-1,1,\dots,1)$.
\end{conjecture}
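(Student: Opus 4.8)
\textbf{Proof proposal for Conjecture \ref{thm:main}.}

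The plan is to reduce the unconstrained problem to the family of graphs already characterized by Theorem \ref{thm:fixedcycle}, and then to show that among the three candidate families, parametrized by the girth $\G$, the minimum is always attained at $\G = \G^*$ by one of $\Gr_u$ or $\CC$. First I would invoke Theorem \ref{thm:fixedcycle}: for each admissible girth $\G$ (here $3 \leq \G \leq n$, with the extra constraint that $\G$ be realizable by the degree sequence $D$), the optimal unicyclic graph with girth $\G$ is one of $\Gr_u(D,\G)$, $\CC(D,\G)$, or $\Gr_u^*(D,\G)$. So it suffices to (i) rule out $\Gr_u^*(D,\G)$ whenever $\G \neq \G^*$ and (ii) compare the remaining candidates across girths and show the winner occurs at $\G^*$. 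The hypothesis $d_2 \neq 2$ is what excludes the exceptional family $(d_1,2,\dots,2,1,\dots,1)$ mentioned in the text, for which the largest girth wins; with $d_2 \neq 2$ there are at least two vertices of degree $\geq 3$, so the greedy tree $\Gr(D')$ has enough branching that its height governs the optimal girth.

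The core of the argument is a monotonicity-in-$\G$ analysis built on the decomposition $W(G) = W(T_G) - \Di(G)$ from \eqref{eq:diff}. I would fix the underlying tree structure as much as possible and track how $W(T_G)$ and $\Di(G)$ each depend on $\G$. Increasing $\G$ by one (for $\G$ below $2h(\Gr(D'))$) lets the cycle absorb another ``level'' of the greedy tree, which tends to decrease $W(T_G)$ because the tree becomes more balanced around its centroid, while $\Di(G)$ grows roughly like the product of branch sizes on opposite sides of the cycle times the distances involved (Lemma \ref{lem:T+uv} and Lemma \ref{lem:diff}). By Remark \ref{rem:diff}, $\Di(G)$ is maximized when all cycle-branches have equal size, and this equalization becomes possible precisely when $\G$ is about $2h(\Gr(D'))$: once $\G/2$ reaches $h(\Gr(D'))$, the centroid moves onto the cycle (Lemma \ref{centroid}), the graph transitions from the $\Gr_u^*$ regime to the $\Gr_u$/$\CC$ regime, and $\Di(G)$ stops its fast growth. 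For $\G$ beyond $\G^*$, the cycle must stretch past the natural height of the tree, forcing pendant paths of length $> h(\Gr(D'))$ onto the cycle; this increases $W(T_G)$ (the tree is no longer greedy, violating Remark \ref{height}) faster than the additional $\Di(G)$ gain, so $W(G)$ starts increasing — exactly the behaviour illustrated in Figures \ref{fig:example 1} and \ref{fig:example 2}. The parity split in the definition of $\G^*$ comes from Remark \ref{lowdeg}: the cycle, after deleting $u_kw_{k-1}$ or $u_kw_k$, becomes a maximal path, and for the resulting tree to be greedy this path must differ in length from the other maximal paths by at most one, which pins $\G$ to $2h(\Gr(D'))$ or $2h(\Gr(D'))-1$ according to whether the maximum height in $\Gr(D')$ is attained by one leaf or by at least two.

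The main obstacle I anticipate is making the comparison $W(\cdot,\G^*) \leq W(\cdot,\G)$ genuinely rigorous for all $\G$ simultaneously, rather than just establishing local monotonicity near $\G^*$; the function $\G \mapsto \min\{W(\Gr_u(D,\G)), W(\CC(D,\G)), W(\Gr_u^*(D,\G))\}$ need not be unimodal in an obvious closed-form way, and the crossover between the $\Gr_u$ and $\CC$ candidates (visible in Figure \ref{fig:example 1}, panels D and E, where both give $572$) complicates a single clean inequality. I would handle this by writing $W$ as an explicit function of $\G$ using \eqref{EqWiener} with the branch sizes and the $\A_i$'s frozen at their greedy values, reducing the claim to a discrete inequality in $\G$ (a low-degree polynomial in $\G$ after summing the arithmetic series $\lfloor \G^2/4\rfloor$ and the double sums over cycle distances), and then verifying convexity/sign of the second difference on the relevant range $3 \leq \G \leq 2h(\Gr(D'))+1$; the boundary cases $\G$ close to $3$ and $\G$ large enough that the cycle engulfs the whole tree would be checked directly. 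A secondary difficulty is bookkeeping the two sub-cases of $\G^*$ uniformly, and ensuring the chosen cycle is actually realizable (the degree-$\geq 2$ requirement on cycle vertices, step iv of Definition \ref{greedyuni}) for the claimed optimal girth — this should follow from $d_2 \neq 2$ but needs a short verification that $\Gr(D')$ has at least $\G^*$ vertices eligible to sit on the cycle.
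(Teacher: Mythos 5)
The statement you are addressing is presented in the paper as a \emph{conjecture}: the authors give no proof, so there is nothing of theirs to match, and your text is best read as a research plan rather than a proof. The skeleton is sensible (reduce via Theorem \ref{thm:fixedcycle} to the three candidate families at each girth, then compare across girths), but the second step --- which is the entire content of the conjecture --- is not carried out, and the specific mechanism you propose for it would fail. You suggest freezing the branch sizes and the $\A_i$'s at ``their greedy values'' and then checking convexity of $\G \mapsto W$ as a discrete function of $\G$; but as $\G$ varies the optimal configuration changes discretely (which vertices sit on the cycle, how many levels of $\Gr(D')$ the cycle absorbs), so there is no single frozen configuration whose Wiener index you can write as a polynomial in $\G$, and the paper's own data show the minimum is \emph{not} convex in $\G$: for $D_1$ the optimal values are $598, 590, 575, 572, 565, 576$ for $\G = 3,\dots,8$, whose second differences change sign. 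Local monotonicity near $\G^*$ plus a convexity check is therefore not an available route; a genuinely global comparison is needed and you have not identified one. You also lean on Theorem \ref{thm:fixedcycle} and Corollaries \ref{progain} and \ref{proloss} as if they were fully established, but the ``loss versus gain'' comparison underlying them is itself only sketched in the paper, so the reduction step carries over those gaps.

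Separately, you should test the target statement against Figure \ref{fig:example 1} before investing in a proof. For $D_1 = (4,3,3,3,3,3,3,3,2,2,1,\dots,1)$ one finds $h(\Gr(D')) = 3$, with the maximum depth attained by six leaves, so the displayed formula gives $\G^* = 6$ and $\min\{W(\Gr_u(D_1,6)), W(\CC(D_1,6))\} = 572$; yet panel F of that figure exhibits a unicyclic graph with the same degree sequence and $W = 565$. Either the formula for $\G^*$ needs correcting (the data here point to $2h+1$ rather than $2h$) or a different height convention is intended; in either case your justification of the parity split via Remark \ref{lowdeg} would have to be revisited once the correct value of $\G^*$ is pinned down.
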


%
%
%

\bibliographystyle{abbrv} 
\bibliography{wieneruni}
\end{document}